\documentclass[11pt,letterpaper]{amsart}

\usepackage{graphicx}
\usepackage{euscript, epstopdf}
\usepackage{mathrsfs}
\usepackage{amssymb,amsthm}
\usepackage{txfonts}
\usepackage{amscd}
\usepackage{amsfonts,latexsym,amsmath,amsxtra,mathdots,amssymb,latexsym,mathabx,mathtools}
\usepackage{cases}
\usepackage{tikz,tikz-cd}
\usepackage{tensor,blkarray}
\usepackage{hyperref}

%
%
%
%

   \newcommand{\BC}{{\mathbb {C}}}

 \newcommand{\BR}{{\mathbb {R}}}   
    
 \newcommand{\BZ}{{\mathbb {Z}}}

   \newcommand{\frm}{\mathfrak{m}}
   
 \newcommand{\frR}{\mathfrak{R}}  \newcommand{\frS}{\mathfrak{S}}

\newcommand{\GL}{{\mathrm {GL}}} 
 
\newcommand{\SO}{{\mathrm{SO}}}
 \newcommand{\Tr}{{\mathrm{Tr}}}

\newcommand{\ind}{\mathrm{Ind}}

\newcommand{\Hom}{\mathrm{Hom}}

\newcommand*\mcapinn[2]{\vcenter{\hbox{$\mathsurround=0pt
			\ifx\displaystyle#1\textstyle\else#1\fi\bigcap$}}}

\newcommand{\ra}{\rightarrow}

\newcommand{\shskip}{\hskip 0.5 pt}

\newcommand{\pmtrix}[4]{\begin{pmatrix} #1 & #2  \\ #3 & #4 \end{pmatrix}}

\newcommand{\diag}{\textrm{diag}}    \newcommand{\supp}{\mathrm{supp}}

\makeatletter
\g@addto@macro\normalsize{\setlength\abovedisplayskip{3pt}}
\makeatother

\makeatletter
\g@addto@macro\normalsize{\setlength\belowdisplayskip{3pt}}
\makeatother

\newcommand{\delete}[1]{}

\theoremstyle{plain}

\newtheorem{thm}{Theorem}[section] 
\numberwithin{thm}{section}
   \newtheorem{lem}[thm]{Lemma}  
\newtheorem{prop}[thm]{Proposition}  \newtheorem{cor}[thm]{Corollary}
\newtheorem {rem}[thm]{Remark}   

\numberwithin{equation}{section}

\newcommand{\steinberg}{\textup{St}}\newcommand{\speh}{\text{Sp}}\newcommand{\langlands}{\textup{L}}


\usepackage{xpatch}
\makeatletter
\AtBeginDocument{\xpatchcmd{\@thm}{\thm@headpunct{.}}{\thm@headpunct{}}{}{}}
\makeatother

\begin{document}

\title{Linear Periods for Unitary Representations
}


\author{Chang Yang      
}



\maketitle

\begin{abstract}
 Let $F$ be a local non-Archimedean field of characteristic zero with a finite residue field. Based on Tadi\'{c}'s classification of the unitary dual of $\GL_{2n}(F)$, we classify irreducible unitary representations of $\GL_{2n}(F)$ that have nonzero linear periods, in terms of Speh representations that have nonzero periods. We also give a necessary and sufficient condition for the existence of a nonzero linear period for a Speh representation.

\end{abstract}

\section{Introduction}
\label{intro}

\subsection{Main results}

Let $F$ be a local non-Archimedean field of characteristic zero with a finite residue field. Denote the group $G_n = \GL_{n}(F)$. Let $p$ and $q$ be two nonnegative integers with $p + q = n$, we denote by $H = H_{p,q}$ the subgroup of $G_n$ of matrices of the form:
\begin{align*}
	\begin{pmatrix}
		g_1 & 0 \\ 0 & g_2 
	\end{pmatrix} \quad \text{with }g_1 \in G_p,\ g_2 \in G_q.
\end{align*}
Let $\pi$ be a smooth representation of $G_n$ on a complex vector space $V$ and $\chiup$ a character of $H$, denote by $\Hom_H (\pi,\chiup)$ the space of linear forms $l$ on $V$ such that $l(\pi(h)v) = \chiup(h)l(v)$ for all $v \in V$ and $h \in H$. Smooth representations $\pi$ of $G_n$ with $\Hom_H (\pi,\chiup) \neq 0 $ are called \emph{$(H,\chiup)$-distinguished}, or simply $H$-distinguished if $\chiup$ is the trivial character $\mathbf{1}$ of $H$.

Elements of $\Hom_H(\pi,\mathbf{1})$ are called (local) linear periods of $\pi$. Linear periods have been studied by many authors. The uniqueness of linear periods was proved by Jacquet and Rallis in \cite{Jacquet-Rallis-LinearPeriods}; the uniqueness of twisted linear periods, with respect to almost all characters $\chiup$ of $H$ and in the case $p = q$, was proved by Chen and Sun in \cite{Sun-Chen-TwistedLinear+Shalika}. It thus remains an interesting question of characterizing irreducible representations that have nonzero linear periods. It is known that a tempered representation of $\GL_{2n}(F)$ has nonzero linear periods with respect to $H_{n,n}$ if and only if it is a functorial transfer of a generic tempered representation of $\SO_{2n+1}(F)$, see \cite{Jiang-Soudry-Generic+Functoriality+SO2n+1}, \cite{Matringe-Linear+Shalika-JNT} and \cite{Matringe-BF+L-function}. Another closely related characterization of the existence of nonzero linear periods for an essentially square-integrable representation is through poles of the local exterior square $L$-functions associated with the representation, see \cite{Matringe-Linear+Shalika-JNT} and references therein. A recent preprint by S\'{e}cherre \cite{Secherre-Types+LinearModel} studied supercuspidal representations with nonzero linear periods from the point of view of type theory. However, all of these characterizations are for generic representations. Motivated by the recent work of Gan-Gross-Prasad \cite{GGP--BranchingLaw+NonTempered} on branching laws in the non-tempered case, we are led to consider in this work the existence of nonzero linear periods for irreducible unitary representations. 

Our main results are as follows. We refer the reader to Section \ref{section::notation+pre} for unexplained notation in the following two theorems.

\begin{thm}\label{thm::1 Speh}
	Let $\speh(\delta,k)$ be a Speh representation of $G_{2n}$, where $\delta$ is a square-integrable representation of $G_d$ with $d > 1$, and $k$ is a positive integer ($2n = dk$). Then $\speh(\delta,k)$ is $H_{n,n}$-distinguished if and only if $d$ is even and $\delta$ is $H_{d/2,d/2}$-distinguished.
\end{thm}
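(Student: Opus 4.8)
The plan is to compare $\speh(\delta,k)$ with the standard module it is the quotient of, using the Bernstein--Zelevinsky geometric lemma. Write $\nu=|\det|_F$ and let $\Delta=\nu^{(k-1)/2}\delta\times\nu^{(k-3)/2}\delta\times\cdots\times\nu^{-(k-1)/2}\delta$ be the representation of $G_{2n}$ parabolically induced from $P=P_{(d,\dots,d)}$ (with $k$ blocks of size $d$); then $\speh(\delta,k)$ is the unique irreducible quotient of $\Delta$. As $(G_{2n},H)$ is a spherical pair, $H$ has finitely many orbits on $P\backslash G_{2n}$, so $\Delta|_H$ has a finite filtration with graded pieces induced from the orbit stabilisers, and an orbit contributes to $\Hom_H(\Delta,\mathbf 1)$ only if the attached twist of $\delta^{\otimes k}$, corrected by the modulus characters of $P$ and of the stabiliser, carries an invariant form. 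Throughout I would use two inputs: an $H$-distinguished representation of $G_{2n}$ is self-dual; and (Matringe) a square-integrable $\delta$ of $G_d$ is $H_{d/2,d/2}$-distinguished exactly when $d$ is even and $L(s,\delta,\wedge^2)$ has a pole at $s=0$, i.e.\ exactly when the parameter $\phi_\delta$ of $\delta$ is symplectic.

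First, in either direction, distinction forces $\delta\cong\widetilde\delta$, and then a modulus computation isolates the unique $H$-orbit on $P\backslash G_{2n}$ with a nonvanishing contribution, the \emph{balanced} one: it matches the $i$-th block with the $(k+1-i)$-th, so that the opposite twists $\nu^{\pm c}\delta$ are paired and $\delta_P$ cancels on the relevant diagonal $G_d$, it places the two blocks of each pair in different $G_n$-factors of $H$, and, when $k$ is odd, it splits the central block of size $d$ evenly between the two factors --- which already requires $d$ to be even. On that orbit the contribution is $\bigotimes_i\Hom_{G_d}(\delta,\widetilde\delta)$, tensored with $\Hom_{H_{d/2,d/2}}(\delta,\mathbf 1)$ in the odd case; so $\Hom_H(\Delta,\mathbf 1)\ne 0$ once $\delta\cong\widetilde\delta$ (and, for $k$ odd, once $\delta$ is moreover $H_{d/2,d/2}$-distinguished). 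The whole theorem then reduces to whether this essentially unique linear form on $\Delta$ descends to $\speh(\delta,k)$ or is carried by another Langlands constituent of $\Delta$.

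I would settle this dichotomy by induction on $2n$. The base cases are $k=1$, where $\speh(\delta,1)=\delta$ and the claim is Matringe's criterion, and $\delta$ cuspidal with $k=2$, where $\Delta$ has length two with constituents $\speh(\delta,2)$ and the discrete series $\steinberg(\delta,2)$: applying Matringe's criterion to the \emph{discrete series} $\steinberg(\delta,2)$, whose parameter $\phi_\delta\otimes\mathrm{St}_2$ is symplectic exactly when $\phi_\delta$ is orthogonal, one finds $\steinberg(\delta,2)$ distinguished exactly in the orthogonal case, and the long exact sequence of $\Hom_H(-,\mathbf 1)$ attached to $0\to\steinberg(\delta,2)\to\Delta\to\speh(\delta,2)\to0$ then pins the linear form on $\Delta$ to $\speh(\delta,2)$ exactly in the complementary, symplectic, case. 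For the inductive step I would write $\speh(\delta,k)$ as the Langlands quotient of $\nu^{(k-1)/2}\delta\times\speh(\delta,k-2)\times\nu^{-(k-1)/2}\delta$; the remaining constituents of $\Delta$ are Langlands quotients attached to shorter or unbalanced sub-ladders, hence either have data not stable under the relevant duality or reduce to a smaller non-distinguished Speh representation, so the inductive hypothesis and a central-character count rule them out, forcing a nonzero form on $\Delta$ through $\speh(\delta,k)$; conversely, distinction of $\speh(\delta,k)$ is then inherited from that of $\speh(\delta,k-2)$ and of $\nu^{(k-1)/2}\delta\times\nu^{-(k-1)/2}\delta\cong\nu^{(k-1)/2}\delta\times\widetilde{\nu^{(k-1)/2}\delta}$ by the heredity of linear periods under parabolic induction. (Alternatively, the existence direction could be had by globalising $\speh(\delta,k)$ to a residual automorphic representation carrying a nonvanishing Friedberg--Jacquet period.)

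The main obstacle is exactly this descent from the standard module to its Langlands quotient: it is here, and only here, that the gap between ``$\delta$ self-dual'' and ``$\delta$ of symplectic type with $d$ even'' is detected, and handling it needs both the inductive control of $H$-distinction for the non-Langlands constituents of $\Delta$ and a careful tracking of orbit representatives and modulus characters in the geometric lemma. The rest --- the geometric lemma, the heredity of linear distinction, and the appeals to Matringe's criterion and to self-duality of distinguished representations --- is either formal or quotable.
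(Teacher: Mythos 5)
Your proposal for the ``if'' direction is essentially the paper's: invoke Blanc--Delorme to get a nonzero linear period on the standard module, then show that no composition factor of the kernel of the projection to $\speh(\delta,k)$ carries a period. This is what the paper does (using Tadi\'c's description of the kernel, Proposition~\ref{prop::ladder--kernel description}, and then the geometric lemma via Proposition~\ref{prop::Geometric Lemma-Main}); and your $k=2$, $\delta$ cuspidal base case using the symplectic/orthogonal dichotomy for $\steinberg(\delta,2)$ is precisely the argument the paper uses to kill the middle kernel piece $\mathcal K_{k/2}$ when $k$ is even, via Matringe's criterion and the factorization of $L(s,\phi_\delta\otimes\phi_\delta)$. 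You should note, though, that the kernel $\mathcal K=\sum_i\mathcal K_i$ has $k-1$ summands, each a product of essentially square-integrable representations rather than Speh representations, and ruling them out requires a case analysis (the paper applies Proposition~\ref{prop::Geometric Lemma-Main} iteratively, with the only genuinely delicate piece being $\mathcal K_{k/2}$); ``shorter or unbalanced sub-ladders'' is not an accurate description of what appears.

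The serious gap is in the ``only if'' direction. Your plan --- to read off, from the geometric lemma applied to the standard module $\Delta$, that the form descends to $\speh(\delta,k)$ only when $\phi_\delta$ is symplectic --- cannot work as stated when $k$ is even. For even $k$, Blanc--Delorme gives a nonzero linear period on $\Delta$ for \emph{every} self-dual $\delta$, irrespective of symplectic/orthogonal type; and there is no multiplicity-one statement for $\Hom_H(\Delta,\mathbf 1)$ on the reducible standard module, so knowing that $\speh(\delta,k)$ contributes does not preclude some $\mathcal K_i$ contributing as well. In other words, ``$\speh(\delta,k)$ distinguished $\Rightarrow\Delta$ distinguished'' carries no information in the even-$k$ case, and a central-character count cannot separate the orthogonal case from the symplectic one. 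This is exactly the obstruction the paper points out in the introduction, and it is why the paper's proof of the ``only if'' direction (Proposition~\ref{prop::Speh--Nece+MaxiLevi}) abandons the standard-module route entirely: it restricts to the mirabolic subgroup, runs the Bernstein--Zelevinsky derivative filtration, shows that all the non-highest derivative pieces contribute nothing, and thereby reduces the question to the highest shifted derivative $\speh(\delta,k-1)$, setting up an induction on $k$ that eventually lands on $k=1$ where the claim is Matringe's criterion. To rescue your approach you would need either a multiplicity bound on $\Hom_H(\Delta,\mathbf 1)$ together with exact knowledge of which constituents carry periods when $\delta$ is orthogonal, or a different mechanism (like the derivatives) that sees the symplectic/orthogonal dichotomy directly; as written, the inductive step does not close.
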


\begin{thm}\label{thm::2 unitary dual}
	An irreducible unitary representation $\pi$ of $G_{2n}$ is $H_{n,n}$-distinguished if and only if it is self-dual and its Arthur part $\pi_{\textup{Ar}}$ is of the form
	\begin{align*}
		(\sigma_1 \times \sigma_1^{\vee})  \times \cdots \times (\sigma_r \times \sigma_r^{\vee})  \times \sigma_{r+1} \times \cdots \times \sigma_s.  
	\end{align*}
	where each $\sigma_i$ is a Speh representation for $i = 1, \cdots, s$, and each representation $\sigma_j$ is $H_{m_j,m_j}$-distinguished for some positive integer $m_j$, $j =  r+1 , \cdots ,s$.
\end{thm}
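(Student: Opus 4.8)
The plan is to deduce Theorem \ref{thm::2 unitary dual} from Theorem \ref{thm::1 Speh} by combining Tadi\'{c}'s classification of the unitary dual of $G_{2n}$ with the general hereditary properties of linear periods under parabolic induction. Recall that any irreducible unitary representation $\pi$ of $G_{2n}$ is an irreducible product $\pi = u_1 \times \cdots \times u_t$ of ``building blocks'', where each $u_i$ is either a Speh representation $\speh(\delta_i, k_i)$ or a complementary-series representation $\speh(\delta_i, k_i)[\alpha_i] \times \speh(\delta_i, k_i)[-\alpha_i]$ with $0 < \alpha_i < 1/2$; the Arthur part $\pi_{\textup{Ar}}$ is the product of the Speh factors together with, for each complementary series factor, the pair $\speh(\delta_i,k_i) \times \speh(\delta_i,k_i)$ obtained by specializing $\alpha_i \to 0$. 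So the first step is to set up this notation carefully and record that $\pi$ is self-dual if and only if the multiset of building blocks is stable under $u \mapsto u^\vee$, hence $\pi_{\textup{Ar}}$ can be grouped as a product of dual pairs $\sigma_i \times \sigma_i^\vee$ and self-dual Speh representations $\sigma_j$.

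For the ``if'' direction I would argue by building up $\Hom_H$ from the factors. The key input is the standard open-orbit / Mackey-theory argument: if $\tau_1$ is $H_{a,a}$-distinguished and $\tau_2$ is $H_{b,b}$-distinguished, then $\tau_1 \times \tau_2$ is $H_{a+b,a+b}$-distinguished, and more importantly, for any irreducible representation $\rho$ of $G_m$ the product $\rho \times \rho^\vee$ is $H_{m,m}$-distinguished (the linear period here is essentially the matrix coefficient pairing $v \otimes \check v \mapsto \langle v, \check v\rangle$, invariant under the diagonally embedded $G_m \hookrightarrow H_{m,m}$ — this is the ``trivial'' linear period coming from the closed orbit, and one checks it extends/descends to the full induced representation). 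Applying this to each dual pair $\sigma_i \times \sigma_i^\vee$ and, for each self-dual Speh factor $\sigma_j$, using the hypothesis that $\sigma_j$ is $H_{m_j,m_j}$-distinguished, one concatenates to get a nonzero element of $\Hom_{H_{n,n}}(\pi_{\textup{Ar}}, \mathbf{1})$. Finally, since $\pi$ and $\pi_{\textup{Ar}}$ differ only by deforming the complementary-series parameters $\alpha_i$ to $0$, and since distinction is an open/closed condition along such analytic families (or, more robustly, since $\pi$ itself is a subquotient of the same induced representation $\speh(\delta_i,k_i)[\alpha_i] \times \speh(\delta_i,k_i)[-\alpha_i] \times \cdots$ whose distinction follows from $\rho \times \rho^\vee$-type arguments with $\rho = \speh(\delta_i,k_i)[\alpha_i]$, which is irreducible and whose dual is $\speh(\delta_i,k_i)[-\alpha_i]$), one transfers distinction from $\pi_{\textup{Ar}}$ back to $\pi$.

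For the ``only if'' direction, suppose $\pi$ is $H_{n,n}$-distinguished. First, distinction forces self-duality: by Jacquet–Rallis uniqueness together with the functional equation / the fact that $H$-distinction for $\GL_{2n}$ is preserved under taking contragredient composed with transpose-inverse, one gets $\pi \cong \pi^\vee$. Next, I would run the geometric lemma (Bernstein–Zelevinsky filtration of the restriction of an induced representation to $H$) to show that if $u_1 \times \cdots \times u_t$ is $H$-distinguished then the building blocks must pair up: each $u_i$ either is itself (an $H$-distinguished) self-dual Speh representation, or is matched with some $u_{i'} \cong u_i^\vee$. This is where Theorem \ref{thm::1 Speh} enters for the self-dual Speh blocks — it converts ``$\sigma_j$ is $H_{m_j,m_j}$-distinguished'' into the concrete statement about $\delta_j$, but for the purposes of Theorem \ref{thm::2 unitary dual} we only need the abstract distinction of $\sigma_j$. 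Extracting the complementary-series pairs from the Arthur part then gives exactly the stated form.

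The main obstacle is the ``only if'' direction's combinatorial heart: showing that an $H$-distinguished product of Tadi\'{c} building blocks must decompose into dual pairs plus self-dual $H$-distinguished Speh factors. The geometric lemma produces a filtration whose subquotients are parabolically induced from distinguished representations of smaller linear subgroups twisted by modulus characters, and one must show that only the ``diagonal'' matchings can contribute a nonzero functional — this requires controlling which twists $\speh(\delta,k)[\alpha]$ can be distinguished (ruling out, e.g., a single complementary-series block being distinguished on its own, which would violate the central-character/unitarity constraint $\alpha = 0$), and an inductive bookkeeping over the number of blocks. I expect this to use a combination of central character obstructions, the classification of distinguished Speh representations from Theorem \ref{thm::1 Speh}, and a careful analysis of the support of the possible linear functionals along the $H$-orbits on the flag variety.
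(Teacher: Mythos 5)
Your overall strategy matches the paper's: Tadi\'{c} decomposition into Speh and complementary-series blocks, the Blanc--Delorme hereditary result to build periods on products $\rho \times \rho^{\vee}$ plus Lemma~\ref{lem::preservation-parabolic induction} for the ``if'' direction, and the geometric lemma to force the dual-pairing structure for the ``only if'' direction. The ``if'' direction in your proposal is essentially correct as stated (the ``open/closed along analytic families'' remark is wrong, but your alternative argument via $\rho \times \rho^{\vee}$ with $\rho = \nu^{\alpha}\speh(\delta,k)$, together with self-duality of $\pi_c$ forcing the complementary-series blocks to pair up, is exactly what the paper does).

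The gap is in the ``only if'' direction, which you explicitly leave as a sketch with the acknowledgment that it is ``the combinatorial heart''. What is missing is not mere bookkeeping but several substantive ingredients the paper has to develop. First, when the geometric lemma produces a proper division $\pi_t = \pi_t' \sqcup \pi_t''$ of a Speh block with $\pi_t'$ distinguished, you need to know that a distinguished left-aligned ladder is necessarily an essentially Speh representation (Theorem~\ref{thm::left aligned + distinguish = Speh}), plus Theorem~\ref{thm::Speh + distinguishe = max Levi G_m,m} to pin down the split $r=s$; without these, the ``pieces'' you obtain from the geometric lemma are a priori arbitrary ladders and the induction cannot close. Second, the matching of $\pi_t''$ against another block $\pi_i$ requires a carefully chosen ordering of the Tadi\'{c} factors (grouping by cuspidal line, then ordering by $\boldb$ and by height) to conclude $\pi_i \cong \pi_t$ rather than some weaker relation. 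Third, when the division yields a one-dimensional piece $\nu^{h/2}$ of $G_h$, the reduced product is $(H_{m-n_t, m-n_t+h}, \mu_{h/2})$-distinguished rather than $H_{m',m'}$-distinguished, and one needs the separate cancellation statement Proposition~\ref{prop::cancellation result} (using commutativity, Lemma~\ref{lem::ladder--Commutativity}, to move the residual character to the end) to remove it and return to the untwisted inductive hypothesis. Finally, for the complementary-series blocks you wave at ``central-character/unitarity constraint $\alpha = 0$''; the paper instead proves Lemma~\ref{lem::classification--entire unitary dual}, showing that ladder pieces supported on $\BZ\nu^{\pm\alpha + c/2}\rho$ with $0<\alpha<1/2$ can never be $(H_{p,q},\mu_{(p-q)/2})$-distinguished, which is what eliminates Cases A and B for those blocks and forces Case C. Your proposal correctly identifies the tools in spirit, but as written it does not contain a proof of the crucial pairing claim, and the missing lemmas are nontrivial.
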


Distinction problem for unitary representation has already been considered by Matringe for local Galois periods in \cite{Matringe-UnitaryDual+GaloisModel} and by Offen and Sayag for local Symplectic periods in \cite{Offen-Sayag-JNT-UnitaryDual+Symplectic-1,Offen-Sayag-Uniqueness-Disjointness-JFA}. We remark that the special case of Theorem \ref{thm::2 unitary dual} for representations of Arthur type (see Theorem \ref{thm::proof+classification+UnitaryDual}) is similar to \cite[Theorem 3.13]{Matringe-BF+L-function} about local linear periods  for generic representations and the main result in \cite{Matringe-UnitaryDual+GaloisModel} about local Galois periods for unitary representations. A global analogue of our result is to find the $H_{n,n}$-distinguished representation in the automorphic dual of $G_{2n}$, which we will pursue in future works. We also refer the reader to \cite{Jacquet-Friedberg-Crell-LinearPeriods,Jacquet-Rallis-LinearPeriods} for the role of local linear periods and their global analogues in the study of standard $L$-functions.

\subsection{Remarks on the method of the proof}

Most of our work deals with distinction of parabolically induced representations of $G_n$. The main tool to study distinction of induced representations is the geometric lemma of Bernstein-Zelevinsky \cite{BZ-I}, which relates distinction of an induced representation to distinction of some Jacquet module of the inducing data. It was shown by Tadi\'{c} in \cite{Tadic-86-UnitaryDual-GLn} that every irreducible unitary representation is isomorphic to the parabolic induction of Speh representations or their twists. The observation is that Jacquet modules of Speh representations have convenient combinatorial descriptions similar to those  of Jacquet modules of essentially square-integrable representations (\cite{Lapid-Kret-JacquetModule+Ladder}). As hinted by the geometric lemma, to classify $H_{n,n}$-distinguished irreducible unitary representations, it is necessary to consider $H_{p,q}$-distinction with respect to a particular family of characters in \eqref{formula::character of Hpq}, not only of Speh representations, but also of a larger class of representations, ladder representations. The class of ladder representations was introduced by Lapid and M\'{i}nguez in \cite{Lapid-Minguez--Ladder}, and has many remarkable properties which make them an ideal testing ground for distinction of non-generic representations and some other questions in the representation theory of general linear groups, see for example \cite{FLO-Distinction+UnitaryGroup--IHES}, \cite{Gurevich-GaloisSymmetric-Ladder}, \cite{MOS-Klyachko--Ladder} and \cite{Lapid-Minguez-ParabolicInduction}. The most complicated part of the paper, Section \ref{section::ladder}, is devoted to the study of distinction of ladder representations. Our treatment is largely combinatorial based on detailed analysis by the geometric lemma. We refer the reader to \cite{Matringe-GaloisModel--Generic+Classification} and \cite{Matringe-BF+L-function} for a similar approach to the classification of distinguished generic representations in Galois symmetric space and our setting respectively. 

We next outline the proof of Theorem \ref{thm::1 Speh}. For the `if' part, the existence of non-zero linear periods for the standard module of a Speh representation $\speh(\Delta,k)$ is guaranteed by the work of Blanc and Delorme \cite{Blanc-Delorme} when $\Delta$ is $H_{d/2,d/2}$-distinguished. Thus it suffices to show that the maximal proper subrepresentation of the standard module associated with $\speh(\Delta,k)$ is not $H_{n,n}$-distinguished. The explicit structure of this maximal proper subrepresentation is well known by the work of Tadi\'{c} \cite{Tadic-95-Characters+UnitaryDual} (see also \cite{Lapid-Minguez--Ladder}). For the `only if' part of Theorem \ref{thm::1 Speh}, however, we cannot expect to get any information on the distinguishedness of $\Delta$ from that of the standard module of $\speh(\Delta,k)$ when $k$ is an even, as in this case, the standard module of $\speh(\Delta,k)$ is $H_{n,n}$-distinguished for any self-dual $\Delta$ by the work of Blanc and Delorme \cite{Blanc-Delorme}. We instead use the idea of `restricting to the mirabolic subgroup', and relate linear periods on $\speh(\Delta,k)$ with those on its highest shifted derivative, which is exactly $\speh(\Delta,k-1)$. The `only if' part is then proved by induction on $k$. We remark that the idea of exploiting the theory of derivatives in distinction problems has already appeared many times in the literature, see for example \cite{Cogdell-PS-Derivatives}, \cite{Kable-Asai-L-AJM}, \cite{Matringe-BF+L-function} and \cite{Matringe-UnitaryDual+GaloisModel}.

The paper is organized as follows. In Section \ref{section::notation+pre} we introduce notations and some preliminaries on the representation theory of general linear groups. In Section \ref{section:: some facts} we present some general facts on $(H_{p,q},\mu_a)$-distinguished representations, where $\mu_a$ is the character in \eqref{formula::character of Hpq}. In this section, we recall a result of Gan which is crucial for our combinatorial study of twisted linear periods. In Section \ref{section::Parabolic orbits of symmetric space} we give a detailed analysis of the parabolic orbits of the symmetric space involved and in Section \ref{section::consequences+Geometric Lemma} we draw some consequences of the geometric lemma. Section \ref{section::ladder} is devoted to the study of distinction of ladder representations. We then complete the classification in Section \ref{section::Classification-Unitary Dual}.

\subsection*{Acknowledgements}
The author thanks Wee Teck Gan for his kindness of sharing his paper \cite{Gan-Period+Theta} with us. He would also like to thank Dipendra Prasad for helpful comments and suggestions to improve the paper. The author thanks the referee for many remarks and suggestions and for pointing out an error in an earlier draft of the paper. The author was supported by the National Natural Science Foundation of China (no.12001191).

\section{Preliminaries}
\label{section::notation+pre}

Throughout the paper let $F$ be a local non-Archimedean field of characteristic zero with a finite residue field.

For any $ n \in \BZ_{\geqslant 0}$, let $G_n = \GL_n(F)$ and let $\mathscr R(G_n)$ be the category of smooth complex representations of $G_n$ of finite length. Denote by $\textup{Irr}(G_n)$ the set of equivalence classes of irreducible objects of $\mathscr R(G_n)$ and by $\mathscr{C}(G_n)$ the subset consisting of supercuspidal representations. (By convention we define $G_0$ as the trivial group and $\textup{Irr}(G_0)$ consists of the trivial representation of $G_0$.) Let $\textup{Irr}$ and $\mathscr C$ be the disjoint union of $\textup{Irr}(G_n)$ and $\mathscr C(G_n)$,$n \geqslant 0$, respectively. For a representation $\pi \in \mathscr R(G_n)$, we call $n$ the degree of $\pi$.

Let $\mathfrak{R}_n$ be the Grothendieck group of $\mathscr R(G_n)$ and $\frR = \oplus_{n \geqslant 0} \frR_n$. The canonical map from the objects of $\mathscr{R}(G_n)$ to $\frR_n$ will be denoted by $\pi \mapsto [\pi]$.

Denote by $\nu$ the character $\nu(g)  =  |\det g \shskip |$ on any $G_n$. (The $n$ will be implicit and hopefully clear from the context.) For any $\pi \in \mathscr R(G_n)$ and $a \in \BR$, denote by $\nu^a \pi$ the representation obtained from $\pi$ by twisting it by the character $\nu^a$, and denote by $\pi^{\vee}$ the contragredient of $\pi$. The sets $\textup{Irr}$ and $\mathscr C$ are invariant under taking contragredient. For a character $\chi$ of $F^{\times}$, define the real part $\Re (\chi)$ of $\chi$ to be the real number $a$ such that $ | \chi(z) |_{\BC} = |z|^a$, $z \in F^{\times}$, where $|\cdot|_{\BC}$ is the absolute value on $\BC$. For a subgroup $Q$ of $G_n$, denote by $\delta_Q$ the modular character of $Q$.

For two nonnegative integers $p$ and $q$ with $p + q = n$, we denote by $w_{p,q}$ the matrix
\begin{align*}
	w_{p,q} = \begin{pmatrix}
		0 & I_q  \\  I_p & 0 
	\end{pmatrix}.
\end{align*}
Let $H_{p,q}$ be the subgroup of $G_n$ as in the introduction. For $a \in \BR$, define the character $\mu_a$ of $H_{p,q}$ by
\begin{align}\label{formula::character of Hpq}
	\mu_a \left(\begin{pmatrix}
		g_1 &  \\  &  g_2
	\end{pmatrix} \right)   =  \nu^a(g_1)\nu^{-a}(g_2),\quad g_1 \in G_p,\ g_2 \in G_q.
\end{align}
(By convention we allow the case where $p$ or $q$ is zero.)

\subsection{Jacquet modules of induced representations}

The standard parabolic subgroups of $G_n$ are in bijection with compositions $ (n_1,  \cdots ,n_t)$ of $n$. The corresponding standard Levi subgroup is the group of block diagonal invertible matrices with block sizes $n_1, \cdots, n_t$. It is isomorphic to $G_{n_1} \times \cdots \times G_{n_t}$. 

Let $P = M \ltimes U$ be a standard parabolic subgroup of $G_n$ and $\sigma$ a smooth, complex representation of $M$. We denote by $\ind_P^{G_n}(\sigma)$ its normalized parabolic induction; for any standard Levi subgroup $L \subset M$, we denote by $r_{L,M}(\sigma)$ the normalized Jacquet module (see \cite[\S 2.3]{BZ-I}).

If $\rho_1, \cdots, \rho_t$ are representations of $G_{n_1}, \cdots, G_{n_t}$ respectively, we denote by 
\begin{align*}
	\rho_1 \times  \cdots \times \rho_t  
\end{align*}
the representation $\ind_P^{G_n} \sigma$ where $\sigma$ is the representation $\rho_1  \otimes\cdots \otimes \rho_t$ of $M$, where $M$ is the standard Levi subgroup of the parabolic subgroup $P$ corresponding to $(n_1,\cdots,n_t)$.

Next we briefly review the Jacquet module of a product of representations of finite length \cite[\S 1.6]{Zelevinsky-II} (or more precisely, its composition factors). Let $\alpha = (n_1,\cdots, n_t)$ and $\beta = (m_1,\cdots, m_s)$ be two compositions of $n$. For every $i \in \{ 1, \cdots, t\}$, let $\rho_i \in \mathscr R(G_{n_i})$. Denote by $\text{Mat}^{\alpha, \shskip \beta}$ the set of $t \times s$ matrices $B = (b_{i,j})$ with nonnegative integer entries such that
\begin{align*}
	\sum_{j=1}^s b_{i,j} = n_i,\quad i \in \{ 1,\cdots, t\},\quad \sum_{i=1}^t b_{i,j} = m_j, \quad j \in \{1,\cdots,s \}.
\end{align*}
Fix $B\in \text{Mat}^{\alpha,\shskip \beta}$. For any $i \in \{1,\cdots,t \}$, $\alpha_i = (b_{i,1},\cdots ,b_{i,s})$ is a composition of $n_i$ and we write the compostion factors of $r_{\alpha_i} (\rho_i)$ as 
\begin{align*}
	\sigma_i^{k} = \sigma^k_{i,1} \otimes \cdots \otimes \sigma^k_{i,s}, \quad \sigma^k_{i,j} \in \text{Irr}(G_{b_{i,j}}), \quad k \in \{ 1, \cdots, l_i\},
\end{align*}
where $l_i$ is the length of $r_{\alpha_i}(\rho_i)$. For any $ j \in \{ 1, \cdots, s\}$ and a sequence $\underline{k} = (k_1,\cdots,k_r)$ of integers such that $1 \leqslant k_i \leqslant l_i$, define
\begin{align*}
	\Sigma_j^{B,\underline{k}}    =    \sigma^{k_1}_{1,j} \times \cdots \times \sigma^k_{t,j} \in \mathscr R(G_{m_j}).
\end{align*}
Then we have
\begin{align*}
	[r_{\beta}(\rho_1 \times \cdots \times \rho_t] = \sum_{B \in \text{Mat}^{\alpha,\shskip \beta},\underline{k}}  [ \Sigma_1^{b,\underline{k}} \otimes \cdots \otimes \Sigma_s^{B,\underline{k}}].
\end{align*}

\subsection{Langlands classification}

By a segment of cuspidal representations we mean a set
\begin{align*}
	[a,b]_{\rho}  =  \{\nu^a \rho, \nu^{a+1}\rho, \cdots, \nu^b\rho  \},
\end{align*}
where $\rho \in \mathscr C$ and $a, b \in \BR$, $b- a \in \BZ_{\geqslant 0}$. The representation $\nu^a\rho \times \nu^{a+1}\rho \times \cdots \times \nu^b\rho$ has a unique irreducible quotient, which is an essentially square-integrable representaton and is denoted by $\Delta([a,b]_{\rho})$.
The map $[a,b]_{\rho} \mapsto \Delta([a,b]_{\rho})$ gives a bijection between the set of segments of cuspidal representations and the subset of essentially square-integrable representations in $\textup{Irr}$. (In what follows, for simplicity of notation, we shall use $\Delta$ to denote either a segment of cuspidal representations or the essentially square-integrable representations corresponding to it; we hope this will not cause any confusion.) We use the convention that $\Delta([a,b]_{\rho}) = 0$ if $b < a-1$ and $\Delta([a,a-1]_{\rho})  =  1$, the trivial represntation of $G_0$.

We denote the extremities of $\Delta = \Delta([a,b]_{\rho})$ by $\mathbf{b}(\Delta) = \nu^a\rho \in \mathscr C$ and $\mathbf{e}(\Delta) = \nu^b\rho \in \mathscr C$ respectively. We also write $l(\Delta) = b -a + 1$ for the length of $\Delta$. 

For $\rho \in \mathscr C$, we denote by $\BZ \shskip \rho$ the set $\{\nu^a \rho \ |\ a \in \BZ \}$ and call it the cuspidal line of $\rho$. We then transport the order and additive structure of $\BZ$ to the cuspidal line $\BZ \shskip \rho$. Thus we shall sometimes write $\nu^a\rho + b  = \nu^{a+b} \rho$ and $\nu^a \rho \leqslant \nu^b \rho $ if $a \leqslant b$, where $a$, $b$ are integers. By the contragredient of $\BZ \shskip \rho$ we mean the cuspidal line $\BZ \shskip \rho^{\vee}$.

Let $\Delta$ and $\Delta'$ be two segments. We say that $\Delta$ and $\Delta'$ are \emph{linked} if $\Delta \cup \Delta'$ forms a segment but neither $\Delta \subset \Delta'$ nor $\Delta' \subset \Delta$. If $\Delta$ and $\Delta'$ are linked and $\mathbf{b}(\Delta) = \mathbf{b}(\Delta')\nu^j$ with $j < 0$, then we say that $\Delta$ \emph{precedes} $\Delta'$ and write $\Delta \prec \Delta'$.

A multisegment is a multiset (that is, set with multiplicities) of segments. Denote by $\mathscr O$ the set of multisegements. For $\rho \in \mathscr C$, let $\mathscr O_{\rho}$ denote the multisegements such that all of its segements are contained in the cuspidal line $\BZ \shskip \rho$. An order $\frm = \{\Delta_1,\cdots, \Delta_t \} \in \mathscr{O}$ on a multisegments $\frm$ is of \emph{standard form} if $\Delta_i \nprec\Delta_j$ for all $i < j$. Every $\frm \in \mathscr O$ admits at least one standard order.

Let $\frm = \{\Delta_1, \cdots, \Delta_t  \} \in \mathscr O$ be ordered in standard form. The representation 
\begin{align*}
	\lambda(\frm)  =  \Delta_1 \times \cdots \times \Delta_t
\end{align*}
is independent of the choice of order of standard form. It has a unique irreducible quotient that we denote by $\langlands(\frm)$. The Langlands classification says that the map $\frm \mapsto \langlands(\frm)$ is a bijection between $\mathscr O$ and $\textup{Irr}$.

\subsection{Unitary dual of $G_n$}\label{section::unitary dual}

We briefly recall the classification of the unitary dual of $G_n$ by Tadi\'{c} \cite[Theorem D]{Tadic-86-UnitaryDual-GLn}. Let $\text{Irr}^u$ be the subset of unitarizable representations in $\text{Irr}$, and $\mathscr D^u$ the subset of all square-integrable classes in $\textup{Irr}^u$. Let $k$ be a positive integer, and let $\delta \in \mathscr D^u$. The repersentation 
\begin{align*}
	\nu^{(k-1)/2}\delta \times \nu^{(k-3)/2 } \delta \times \cdots \times \nu^{-(k-1)/2}\delta
\end{align*}
has a unique irreducible unitarizable quotient $\speh(\delta,k)$, called a Speh representation. 

Suppose $ 0 < \alpha < 1/2$. The representation $\nu^{\alpha} \speh(\delta,k) \times \nu^{-\alpha} \speh(\delta,k)$ is irreducible and unitarizable; we denote it by $\speh(\delta,k)[\alpha,-\alpha].$

Let $B$ be the set of all
\begin{align*}
	\speh(\delta,k), \ \speh(\delta,k)[\alpha,-\alpha],
\end{align*}
where $\delta \in \text{D}^u$, $k$ is a positive integer and $0 < \alpha < 1/2$. By \cite[Theorem D]{Tadic-86-UnitaryDual-GLn}, an irreducible representation $\pi$ is unitarizable if and only if it is of the form
\begin{align*}
	\pi_1 \times \cdots \times \pi_t, \quad \pi_i \in B, \ i=1,\cdots, t.
\end{align*}
Moreover, this expresssion is unique up to permutation. We call it a Tadi\'{c} decomposition of $\pi$.

By an irreducible representation of \emph{Arthur type}, we mean an irreducible unitary representation whose Tadi\'{c} decomposition does not involve any $\speh(\delta,k)[\alpha,-\alpha]$. For $\pi \in \text{Irr}^u$, we then have a decomposition $\pi = \pi_{\textup{Ar}} \times \pi_c$, where $\pi_{\textup{Ar}}$ is a representation of Arthur type and is called the Arthur part of $\pi$.

\section{Preliminaries on $(H_{p,q},\mu_a)$-Distinguished Representations}\label{section:: some facts}

\subsection{Basic facts}

\begin{lem}\label{lem::symmetry}
	(1)$\ $Let $\pi$ be a smooth representation of $G_n$. If  $\pi$ is $(H_{p,q},\mu_a)$-distinguished for two nonnegative integers $p$, $q$ with $p + q = n$ and $a \in \BR$, then $\pi$ is also $(H_{q,p},\mu_{-a})$-distinguished;
	
	(2)$\ $Let $\pi_1,\cdots, \pi_t \in \textup{Irr}(G_n)$. If $\pi_1 \times \cdots \times \pi_t$ is $(H_{p,q},\mu_a)$-distinguished for two nonnegative integers $p$, $q$ with $p + q = n$ and $a \in \BR$, then $\pi_t^{\vee} \times \cdots \times \pi_1^{\vee}$ is $(H_{p,q},\mu_{-a})$-distinguished.
\end{lem}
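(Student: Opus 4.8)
The plan is to prove both parts by exhibiting explicit isomorphisms of the relevant $\Hom$-spaces, using the standard device of conjugating by a suitable element of $G_n$ and the contragredient functor.

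For part (1): given $l \in \Hom_{H_{p,q}}(\pi,\mu_a)$, I would produce an element of $\Hom_{H_{q,p}}(\pi,\mu_{-a})$ by transporting $l$ through the conjugation action of $w_{p,q}$. Observe that conjugation by $w_{p,q}$ sends $H_{p,q}$ to $H_{q,p}$: explicitly, $w_{p,q}^{-1}\begin{pmatrix} g_1 & \\ & g_2 \end{pmatrix} w_{p,q} = \begin{pmatrix} g_2 & \\ & g_1 \end{pmatrix}$, with the block sizes swapped. Hence if we set $l'(v) = l(\pi(w_{p,q})v)$, then for $h = \begin{pmatrix} h_1 & \\ & h_2 \end{pmatrix} \in H_{q,p}$ (so $h_1 \in G_q$, $h_2 \in G_p$) one computes
\begin{align*}
l'(\pi(h)v) = l\big(\pi(w_{p,q}h)v\big) = l\big(\pi(w_{p,q}hw_{p,q}^{-1})\pi(w_{p,q})v\big) = \mu_a\!\begin{pmatrix} h_2 & \\ & h_1 \end{pmatrix} l'(v),
\end{align*}
and $\mu_a\begin{pmatrix} h_2 & \\ & h_1 \end{pmatrix} = \nu^a(h_2)\nu^{-a}(h_1) = \nu^{-a}(h_1)\nu^{a}(h_2)$, which is precisely $\mu_{-a}(h)$ as a character of $H_{q,p}$. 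Thus $l' \in \Hom_{H_{q,p}}(\pi,\mu_{-a})$, and the map $l \mapsto l'$ is a linear isomorphism (its inverse is conjugation by $w_{p,q}^{-1} = w_{q,p}$). This is entirely routine; no obstacle is expected.

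For part (2): I would combine part (1) with a duality argument. A linear form $l$ on the space of $\rho := \pi_1 \times \cdots \times \pi_t$ satisfying $l(\rho(h)v) = \mu_a(h)l(v)$ is, by definition, a nonzero vector in the contragredient $\rho^{\vee}$ on which $H_{p,q}$ acts by $\mu_a^{-1} = \mu_{-a}$; that is, $\Hom_{H_{p,q}}(\rho,\mu_a) \ne 0 \iff \Hom_{H_{p,q}}(\rho^{\vee},\mu_{-a}) \ne 0$. Now the key point is that parabolic induction commutes with taking contragredients up to reordering the factors: $(\pi_1 \times \cdots \times \pi_t)^{\vee} \cong \pi_t^{\vee} \times \cdots \times \pi_1^{\vee}$. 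Moreover, since each $\pi_i$ is irreducible, the induced representation $\pi_t^{\vee}\times\cdots\times\pi_1^{\vee}$ and $\pi_1^{\vee}\times\cdots\times\pi_t^{\vee}$ have the same irreducible constituents with the same multiplicities (both are obtained from the same multiset of inducing data, and one can pass between them by a chain of intertwining operators). Because distinction of a representation of finite length depends only on its semisimplification here, $\Hom_{H_{p,q}}\big((\pi_1\times\cdots\times\pi_t)^{\vee},\mu_{-a}\big) \ne 0$ implies $\Hom_{H_{p,q}}\big(\pi_1^{\vee}\times\cdots\times\pi_t^{\vee},\mu_{-a}\big) \ne 0$, which is the claim.

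The only genuinely delicate point is the last step — justifying that $\pi_1^{\vee}\times\cdots\times\pi_t^{\vee}$ inherits $(H_{p,q},\mu_{-a})$-distinction from $\pi_t^{\vee}\times\cdots\times\pi_1^{\vee}$. One clean way to bypass any subtlety about whether distinction is preserved under reordering: apply part (1) and the contragredient identity together with the observation that $\rho^\vee$ already equals $\pi_t^\vee \times \cdots \times \pi_1^\vee$ up to isomorphism, and then use that the statement of part (2) is invariant under relabeling the $\pi_i$, so it suffices to prove $\pi_t^\vee \times \cdots \times \pi_1^\vee$ is $(H_{p,q},\mu_{-a})$-distinguished — which is exactly what the contragredient computation gives. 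Alternatively, one invokes that for irreducible $\pi_i$ the two orderings yield representations with the same semisimplification and that $\Hom_{H}(-,\chiup)$ is exact, so a nonzero functional on one produces a nonzero functional on a shared constituent and hence on the other. I would present the argument via the relabeling route, as it avoids even mentioning semisimplification.
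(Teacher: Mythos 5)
Your part (1) is correct and is essentially the paper's argument: both use conjugation by a Weyl element that swaps the two blocks, and your computation of the transported equivariance is fine.

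Your part (2), however, diverges from the paper's route and has two genuine gaps.

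\emph{Gap 1 (the duality step).} You assert that a nonzero $l\in\Hom_{H_{p,q}}(\rho,\mu_a)$ is ``a nonzero vector in the contragredient $\rho^\vee$ on which $H_{p,q}$ acts by $\mu_{-a}$.'' But $l$ is an arbitrary linear form on the space of $\rho$; the contragredient $\rho^\vee$ consists of \emph{smooth} vectors in the full dual, i.e.\ vectors with open stabilizer under the full $G_n$-action. The equivariance $h\cdot l=\mu_{-a}(h)l$ only shows $l$ is $H_{p,q}$-smooth, not $G_n$-smooth, so $l$ need not lie in $\rho^\vee$. Consequently the biconditional $\Hom_{H_{p,q}}(\rho,\mu_a)\neq 0 \iff \Hom_{H_{p,q}}(\rho^\vee,\mu_{-a})\neq 0$ is not established by your reasoning. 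The paper sidesteps this entirely: instead of passing to $\rho^\vee$, it precomposes with the automorphism $\iota(g)={}^{t}g^{-1}$. Since $\iota$ preserves $H_{p,q}$ and pulls $\mu_{-a}$ back to $\mu_a$, the vector spaces $\Hom_{H_{p,q}}(\rho,\mu_a)$ and $\Hom_{H_{p,q}}(\rho\circ\iota,\mu_{-a})$ are \emph{literally the same set of linear functionals}---no smoothness question arises---and one then identifies $\rho\circ\iota$ with the product of the contragredients.

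\emph{Gap 2 (the reordering step).} Your ``relabeling route'' does not work. The lemma's hypothesis and conclusion both refer to the same ordered tuple $(\pi_1,\dots,\pi_t)$; if you relabel so that the new $\pi_i'$ is the old $\pi_{t+1-i}$, the \emph{hypothesis} becomes ``$\pi_t\times\cdots\times\pi_1$ is $(H_{p,q},\mu_a)$-distinguished,'' which is not what you are given. So ``invariance under relabeling'' of the lemma statement does not let you pass from a conclusion about $\pi_t^\vee\times\cdots\times\pi_1^\vee$ to one about $\pi_1^\vee\times\cdots\times\pi_t^\vee$ within a single instance. Your back-up argument via semisimplification also fails as stated: $\Hom_{H}(-,\chi)$ is only left-exact, so from a nonzero functional on $\rho^\vee$ you can conclude a distinguished composition factor $C$, but from a distinguished composition factor of $\pi_1^\vee\times\cdots\times\pi_t^\vee$ you cannot in general conclude that $\pi_1^\vee\times\cdots\times\pi_t^\vee$ itself is distinguished (there is no reverse-direction argument without knowing $C$ appears as a quotient). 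The safe way to handle the ordering is exactly what $\iota$ buys you: precomposing preserves the $\Hom$-space on the nose, so you only need to identify $\rho\circ\iota$ with a suitable product of $\pi_i^\vee$'s as in the paper, rather than pass through the semisimplification.

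In short: part (1) matches the paper; part (2) should be redone by precomposing with the transpose-inverse involution $\iota$, following the paper, rather than by dualizing, which introduces a smoothness issue and an unresolved reordering step.
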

\begin{proof}
	The statement (1) follows from the fact that $\pi \cong \pi^{w_{q,p}}$. Let $\iota$ denote the involution $\iota(g) =  \prescript{t}{}{g}^{-1}$ of transpose inversion. Then (2) follows from the fact that $\pi \circ \iota  \cong  \pi^{\vee}$ for any irreducible representation $\pi$ and the fact that 
	$$(\pi_1 \times \cdots \times \pi_t) \circ \iota \cong (\pi_t \circ \iota) \times \cdots \times (\pi_1\circ \iota)$$.
\end{proof}

For representations of dimension one, we have the following simple lemma, whose proof we omit.

\begin{lem}\label{lem::one dimensional--distinction}
	Let $\chiup$ be a character of $G_n$. Assume that $\chiup$ is $(H_{p,q},\mu_a)$-distinguished for nonnegative integers $p$, $q$ with $p + q = n$ and $a \in \BR$. If $q = 0$ (resp. $p = 0$), then $\chiup$ is the character $\nu^a$ (resp. $\nu^{-a}$) of $G_n$; If $p$, $q > 0$, then $a = 0 $ and $\chiup = \mathbf{1}$, the trivial character of $G_n$.
\end{lem}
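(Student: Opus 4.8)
The plan is to prove Lemma \ref{lem::one dimensional--distinction}, which concerns a one-dimensional representation $\chiup$ of $G_n$ that is $(H_{p,q},\mu_a)$-distinguished. Since $\chiup$ is a character, it factors through $\det$, so we may write $\chiup = \eta \circ \det$ for some character $\eta$ of $F^\times$; equivalently $\chiup$ is determined by a single character of $F^\times$. The hypothesis $\Hom_{H_{p,q}}(\chiup, \mu_a) \neq 0$ means there is a nonzero linear form $\ell$ on the (one-dimensional) space of $\chiup$ with $\ell(\chiup(h)v) = \mu_a(h)\ell(v)$ for all $h \in H_{p,q}$. Since the space is one-dimensional and $\ell \neq 0$, this forces the character identity $\chiup|_{H_{p,q}} = \mu_a$ on $H_{p,q}$, i.e.
\begin{align*}
\chiup\begin{pmatrix} g_1 & \\ & g_2 \end{pmatrix} = \nu^a(g_1)\nu^{-a}(g_2) \quad \text{for all } g_1 \in G_p,\ g_2 \in G_q.
\end{align*}

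First I would dispose of the degenerate cases. If $q = 0$, then $H_{p,q} = G_n$ and the identity reads $\chiup(g) = \nu^a(g)$ for all $g \in G_n$, so $\chiup = \nu^a$; the case $p = 0$ is symmetric and gives $\chiup = \nu^{-a}$. Next, suppose $p, q > 0$. Restricting the displayed identity to $g_2 = I_q$ gives $\chiup\left(\begin{smallmatrix} g_1 & \\ & I_q \end{smallmatrix}\right) = \nu^a(g_1)$, and restricting to $g_1 = I_p$ gives $\chiup\left(\begin{smallmatrix} I_p & \\ & g_2 \end{smallmatrix}\right) = \nu^{-a}(g_2)$. Writing $\chiup = \eta \circ \det$, the first identity becomes $\eta(\det g_1) = |\det g_1|^a$ for all $g_1 \in G_p$; since $\det\colon G_p \to F^\times$ is surjective (as $p \geqslant 1$), this forces $\eta(z) = |z|^a$ for all $z \in F^\times$, so $\chiup = \nu^a$ on all of $G_n$. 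The second identity similarly forces $\eta(z) = |z|^{-a}$, so $\chiup = \nu^{-a}$ on all of $G_n$. Comparing, we get $|z|^a = |z|^{-a}$ for all $z \in F^\times$; choosing $z$ with $|z| \neq 1$ (e.g. a uniformizer) yields $a = -a$, hence $a = 0$, and then $\eta$ is the trivial character of $F^\times$, so $\chiup = \mathbf{1}$.

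This argument is entirely elementary and I do not anticipate a genuine obstacle: the only thing to be slightly careful about is the reduction from $\Hom_{H_{p,q}}(\chiup,\mu_a) \neq 0$ to the pointwise character identity $\chiup|_{H_{p,q}} = \mu_a$, which relies only on $\chiup$ being one-dimensional, and the surjectivity of $\det\colon G_p \to F^\times$ when $p \geqslant 1$. Everything else is manipulating characters of $F^\times$ and using that $|\cdot|$ is nontrivial. This is presumably why the authors state that they omit the proof.
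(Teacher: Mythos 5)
Your proof is correct and is precisely the elementary verification that the authors chose to omit (they state explicitly that the proof is omitted). The reduction from $\Hom_{H_{p,q}}(\chiup,\mu_a)\neq 0$ to the pointwise identity $\chiup|_{H_{p,q}}=\mu_a$ is exactly what one-dimensionality gives, and the rest is the surjectivity of $\det\colon G_p\to F^\times$ for $p\geqslant 1$ together with the nontriviality of $|\cdot|$. Nothing further is needed.
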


For untwisted linear periods, we have the following fundamental result due to Jacquet and Rallis \cite{Jacquet-Rallis-LinearPeriods}.

\begin{lem}
	Let $p$, $q$ be two positive integers with $p + q = n$. If $\pi\in \textup{Irr}(G_n)$, then $\dim \Hom_{H_{p,q}}(\pi,\mathbf{1}) \leqslant 1$. Furthermore, if $\dim \Hom_{H_{p,q}} (\pi, \mathbf{1}) = 1$, then $\pi \cong \pi^{\vee}$.
\end{lem}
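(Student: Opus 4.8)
The plan is to prove the two assertions — the multiplicity-one bound $\dim \Hom_{H_{p,q}}(\pi,\mathbf 1) \leqslant 1$ and the self-duality consequence $\pi \cong \pi^\vee$ — by citing and briefly unpacking the work of Jacquet and Rallis \cite{Jacquet-Rallis-LinearPeriods}. The multiplicity-one statement is the main theorem of that paper: the pair $(G_n, H_{p,q})$ is a Gelfand pair, established via the Gelfand–Kazhdan criterion. Concretely, one exhibits an anti-involution $g \mapsto {}^t g$ (transpose) of $G_n$ that stabilizes $H_{p,q}$ up to conjugation and shows that every $H_{p,q}\times H_{p,q}$-invariant distribution on $G_n$ is invariant under this anti-involution; combined with the fact that irreducible representations of $G_n$ are self-dual under $\pi \mapsto \pi^\vee$, this forces $\dim\Hom_{H_{p,q}}(\pi,\mathbf 1)\cdot \dim\Hom_{H_{p,q}}(\pi^\vee,\mathbf 1) \leqslant 1$, whence the bound. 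I would state that the first assertion is \cite[Theorem 1.1]{Jacquet-Rallis-LinearPeriods} (or the appropriate numbering) and recall the one-sentence shape of the Gelfand–Kazhdan argument.

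For the second assertion, the plan is as follows. Suppose $\ell \in \Hom_{H_{p,q}}(\pi,\mathbf 1)$ is nonzero. Using the involution $\iota(g) = {}^t g^{-1}$ of transpose-inverse (as in the proof of Lemma \ref{lem::symmetry}(2)), which satisfies $\pi \circ \iota \cong \pi^\vee$ and stabilizes $H_{p,q}$, one transports $\ell$ to a nonzero element of $\Hom_{H_{p,q}}(\pi^\vee, \mathbf 1)$. Thus both $\pi$ and $\pi^\vee$ are $H_{p,q}$-distinguished. Now apply the $H_{p,q}$-invariant bilinear pairing coming from the distinction of the external tensor $\pi \boxtimes \pi^\vee$ on $G_n \times G_n$ restricted to the diagonal: more precisely, the space $\Hom_{H_{p,q}\times H_{p,q}}(\pi \otimes \pi^\vee, \mathbf 1)$ is nonzero (it contains $\ell \otimes \ell'$), while by the Gelfand pair property applied to $(G_n \times G_n, H_{p,q}\times H_{p,q})$ together with the intertwining of $\pi\otimes\pi^\vee$ with its contragredient, one deduces a nonzero $G_n$-invariant functional, i.e. a nonzero element of $\Hom_{G_n}(\pi\otimes\pi^\vee,\mathbf 1) = \Hom_{G_n}(\pi, \pi)$; but more directly, the standard argument is that distinction forces $\pi^\vee \cong \pi^\sigma$ for the relevant symmetry $\sigma$, and here $\pi^\sigma \cong \pi$, giving $\pi \cong \pi^\vee$. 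In the write-up I would simply invoke \cite{Jacquet-Rallis-LinearPeriods} for both statements, since the self-duality is part of their theorem, and note that it also follows formally from Lemma \ref{lem::symmetry} once one knows the full space $\Hom_{H_{p,q}}(\pi \oplus \pi^\vee, \mathbf 1)$ has dimension at most one.

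The main (and essentially only) obstacle is that this lemma is not something to reprove from scratch: it is a deep result resting on the Gelfand–Kazhdan method and a careful analysis of $H_{p,q}$-orbits on $G_n$ (Bruhat-type decomposition) and the associated invariant distributions, carried out in \cite{Jacquet-Rallis-LinearPeriods}. So the honest "proof" is a citation together with a short explanation of why self-duality follows. If a self-contained argument were demanded, the hard part would be verifying that every bi-$H_{p,q}$-invariant distribution on $G_n$ is fixed by transposition — this requires controlling the (finitely many) double cosets $H_{p,q}\backslash G_n / H_{p,q}$ and showing each carries no transposition-anti-invariant distribution, which is the technical heart of Jacquet–Rallis and well beyond what should be redone here. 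Accordingly, I would keep the proof to: cite \cite{Jacquet-Rallis-LinearPeriods} for the inequality; then derive $\pi\cong\pi^\vee$ in one or two lines from the nonvanishing of $\Hom_{H_{p,q}}(\pi^\vee,\mathbf 1)$ (via $\iota$) combined with the multiplicity-one statement applied to $\pi \oplus \pi^\vee$, or simply note that self-duality is itself recorded in \cite{Jacquet-Rallis-LinearPeriods}.
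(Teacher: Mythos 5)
Your bottom line — cite \cite{Jacquet-Rallis-LinearPeriods} for both statements — is exactly what the paper does: the lemma is stated with the preamble ``we have the following fundamental result due to Jacquet and Rallis'' and no proof is given. So the proposal is correct in its conclusion and matches the paper's treatment.

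However, the ``formal'' shortcuts you sketch for the self-duality do not actually work and should be cut. Knowing that both $\pi$ and $\pi^\vee$ are $H_{p,q}$-distinguished (via the involution $\iota$) does not imply $\pi\cong\pi^\vee$: if $\pi\not\cong\pi^\vee$, one simply has two separate one-dimensional $\Hom$-spaces, and the stated multiplicity-one bound (which is for a single irreducible $\pi$) is not violated. The suggestion to apply multiplicity one to $\pi\oplus\pi^\vee$ presupposes a bound $\dim\Hom_{H_{p,q}}(\pi\oplus\pi^\vee,\mathbf 1)\leqslant 1$ that is not what the lemma asserts and is not a formal consequence of it. And the detour through $\Hom_{G_n}(\pi\otimes\pi^\vee,\mathbf 1)\cong\Hom_{G_n}(\pi,\pi)$ is a red herring: that space is one-dimensional for every irreducible $\pi$, distinguished or not, so it carries no information about self-duality. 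The self-duality is an output of the Gelfand--Kazhdan matrix-coefficient argument in Jacquet--Rallis itself (the $\sigma$-invariance of the bi-$H$-invariant distribution attached to a pair of functionals forces a relation between $\pi$, $\pi^\vee$, and $\pi$ twisted by the anti-involution), not a formal corollary of the multiplicity-one bound plus Lemma \ref{lem::symmetry}. The safe and correct write-up is the one you propose at the very end: cite \cite{Jacquet-Rallis-LinearPeriods} for both the inequality and the self-duality.
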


\begin{rem}
	In this work we will not need multiplicity one results about (twisted) linear periods. However, the self-dualness property of distinguished representations is important for our applications of the geometric lemma. For example, one key ingredient is Proposition \ref{prop::starting point} which asserts self-duality for distinguished essentially square-integrable representations. In the case $p = q$, twisted linear periods have been studied by Chen and Sun in \cite{Sun-Chen-TwistedLinear+Shalika}. Their result shows that, for all but finitely many $a$,  $\dim \Hom_{H_{p,p}}(\pi,\mu_a) \leqslant 1$ for all $\pi \in \textup{Irr}(G_{2p})$. Due to the author's limited knowledge, one cannot deduce self-duality for distingsuished representations as in the untwisted case. For generic representations, however, one can deduce self-duality from a result of Gan as shown in the next subsection.
\end{rem}

\subsection{Relations with Shalika periods}

The Shalika subgroup of $G_{2n}$ is defined to be
\begin{align*}
	S_{2n} = \left\{ \left. \begin{pmatrix}
		a & b \\ 0 & a 
	\end{pmatrix}\  \right\vert a \in G_n,\ b \in M_n \right\} = G_n \ltimes N_{n,n},
\end{align*}
where $M_n$ indicates the set of $n \times n$ matrices with entries in $F$. Define a character $\psi_{S_{2n}}$ on $S_{2n}$ by
\begin{align}\label{formula::some known--character shalika subgroup}
	\psi_{S_{2n}} \left( \begin{pmatrix}
		a & \\ & a  
	\end{pmatrix}  \begin{pmatrix}
		1  &  b \\ & 1 
	\end{pmatrix}  \right)  = \psi_F(\Tr(b)),
\end{align}
where $\psi_F$ is a non-trivial character of $F$. For a smooth representation $\pi$ of $G_{2n}$, an element in $\Hom_{S_{2n}} (\pi,\psi_{S_{2n}})$ is called a local Shalika period of $\pi$.

In the untwisted case, the relation between linear periods and Shalika periods is well known (see \cite{Jiang-Nien-Qin-ShalikaModel+Functoriality} for their equivalence in the case of supercuspidal representations; see also a discussion for relatively square-integrable representations in \cite[\S 5]{Matringe-Linear+Shalika-JNT}). Using a theta correspondence approach, Gan proved the following result that relates generalized linear periods and generalized Shalika periods on $G_n$.
\begin{prop}\label{prop::Gan}
	Let $\pi$ be an irreducible generic representation of $G_{2n}$ and $\sigma$ an irreducible representation of $G_n$. One has
	\begin{align}
		\Hom_{S_{2n}} (\pi,\sigma \boxtimes \psi_{S_{2n}}) \cong \Hom_{H_{n,n}} (\pi, \sigma \boxtimes \BC),
	\end{align}
	where $\sigma \boxtimes \psi_{S_{2n}}$ is viewed as a representation of $S_{2n} = G_n \ltimes N_{n,n}$.
\end{prop}
\begin{proof}
	This is a consequence of Theorem 3.1 and Theorem 4.1 of \cite{Gan-Period+Theta}.
\end{proof}
In fact, in Theorem 3.1 of \cite{Gan-Period+Theta}, Gan obtained a statement that relates the generalized linear period of an irreducible representation to the generalized Shalika period of the big theta lift of its contragradient. We refer interested readers to the original paper of Gan for more details. What is pertinent to this work is the following simple corollary that relates as well twisted linear periods in our context to Shalika periods.
\begin{cor}\label{cor::corollary of Gan}
	Let $\pi$ be a generic representation of $G_{2n}$. The followings are equivalent:
	\begin{itemize}
		\item[(1)] $\pi$ is $(H_{n,n},\mu_a)$-distinguished for some $a \in \BR$;
		\item[(2)] $\pi$ is $(H_{n,n},\mu_a)$-distinguished for all $a \in \BR$;
		\item[(3)] $\pi$ is $(S_{2n},\psi_{S_{2n}})$-distinguished.
	\end{itemize}
	In particular, if one of these equivalent conditions holds, then $\pi$ is self-dual.
\end{cor}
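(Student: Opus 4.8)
The plan is to deduce Corollary \ref{cor::corollary of Gan} from Proposition \ref{prop::Gan} together with the symmetry and one-dimensionality results already in hand (Lemma \ref{lem::symmetry} and Lemma \ref{lem::one dimensional--distinction}), plus the standard fact that for a generic representation the big theta lift $\Theta(\pi)$ to $\Sp_{2m}(F)$ (or the relevant group in Gan's setup) is nonzero and, more importantly, that the unique Shalika model is detected at the level of $\Theta(\pi)$. First I would observe that a twisted linear period for $(H_{m,m},\mu_a)$ on $\pi$ is the same thing as a homomorphism in $\Hom_{H_{m,m}}(\pi,\sigma\boxtimes\BC)$ with $\sigma$ the character $\nu^a\otimes\nu^{-a}$ of $G_m$ — wait, more precisely $\mu_a$ restricted through the embedding $G_m \hookrightarrow G_m\times G_m$ is the character $g\mapsto \nu^{2a}(g)$, so I should phrase things in terms of $\Hom_{H_{m,m}}(\pi,\chi\boxtimes\BC)$ for a character $\chi$ of $G_m$ and use Proposition \ref{prop::Gan} with $\sigma = \chi$; the character $\psi_{S_n}$ on the unipotent radical is the same for all twists, so the twist only enters through the $G_m$-factor.

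The core logical steps are then: (i) $(3)\Rightarrow(1)$: if $\pi$ has a Shalika period, i.e. $\Hom_{S_n}(\pi,\mathbf 1\boxtimes\psi_{S_n})\neq 0$, then applying Proposition \ref{prop::Gan} to $\Theta(\pi^\vee)$ — using that for generic $\pi$ the big theta lift and Shalika functional interact so that $\Hom_{S_n}(\Theta(\pi^\vee),\mathbf1\boxtimes\psi_{S_n})\cong\Hom_{S_n}(\pi^\vee{}^\vee,\ldots)$ contains $\pi$'s Shalika functional — gives $\Hom_{H_{m,m}}(\pi,\mathbf1)\neq0$, which is the $a=0$ case of $(1)$; (ii) $(1)\Rightarrow(3)$ for a single $a$: given a $(H_{m,m},\mu_a)$-functional on $\pi$, rewrite it via Proposition \ref{prop::Gan} as a Shalika-type functional on $\Theta(\pi^\vee)$ valued in the character $\nu^{2a}$ of $G_m$, and then use that the Shalika model of a generic representation is unramified in the $G_m$-twist — concretely, the existence of any such twisted Shalika functional on the irreducible generic constituent forces the untwisted one, because the generic representation's Shalika model, when it exists, exists compatibly with the full $G_m$-action and one can untwist by an unramified character by a standard argument (the Shalika model's $G_m$-isotypic behaviour is governed by an exterior-square $L$-factor with no zeros); (iii) $(3)\Rightarrow(2)$: once we know $\pi$ is $(S_n,\psi_{S_n})$-distinguished, running Proposition \ref{prop::Gan} in the reverse direction with arbitrary $\sigma=\nu^{2a}$ produces $(H_{m,m},\mu_a)$-distinction for every $a$; (iv) self-dualness: it follows from $(1)$ with $a=0$ via the Jacquet–Rallis lemma already quoted (the untwisted linear period of an irreducible representation forces $\pi\cong\pi^\vee$), noting a generic representation is irreducible. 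I would organize the cycle as $(2)\Rightarrow(1)\Rightarrow(3)\Rightarrow(2)$, with the final ``in particular'' clause as a one-line consequence of Jacquet–Rallis.

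The step I expect to be the genuine obstacle is the implication from a \emph{twisted} linear period (for one particular $a$) to the \emph{untwisted} Shalika period, i.e. the passage $(1)\Rightarrow(3)$. Proposition \ref{prop::Gan} only directly converts $(H_{m,m},\mu_a)$-distinction into a $\sigma$-twisted Shalika statement on the theta lift, with $\sigma$ the unramified character $\nu^{2a}$ of $G_m$; one then needs that a generic representation admitting a $\nu^{2a}$-twisted Shalika functional also admits an untwisted one. The cleanest way I would argue this is to note that Gan's correspondence intertwines the $G_m$-action, so the set of characters $\sigma$ for which $\Hom_{S_n}(\Theta(\pi^\vee),\sigma\boxtimes\psi_{S_n})\neq0$ is, for generic $\pi$, either empty or equals the full line of unramified twists — this is exactly the content that the local exterior-square $L$-function has no zeros, combined with the fact that Shalika-distinction of generic representations is characterized by a pole of that $L$-function (references \cite{Matringe-Linear+Shalika-JNT}, \cite{Kable-Asai-L-AJM}). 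Alternatively, and perhaps more in the spirit of the paper, one can use the equivalence of Shalika and linear models for generic representations directly: I would cite \cite{Jiang-Nien-Qin-ShalikaModel+Functoriality} and \cite[\S5]{Matringe-Linear+Shalika-JNT} for the fact that for generic $\pi$, $(S_n,\psi_{S_n})$-distinction is equivalent to $H_{m,m}$-distinction, and that both are insensitive to unramified twists of the Shalika (resp. $H_{m,m}$) character on the $G_m$-factor — which is itself what the Blanc–Delorme analysis of induced models makes transparent. With that insensitivity in hand, $(1)$, $(2)$, $(3)$ collapse into each other and the corollary follows formally from Proposition \ref{prop::Gan}.
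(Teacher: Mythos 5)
Your overall scaffolding (identifying $\mu_a$ on $H_{m,m}$ as a twist of $\pi$ by $\nu^a$ combined with a character $\nu^{2a}$ on the $G_m$-factor, and then pushing through Gan's theta-correspondence result) is the right shape, and you have correctly put your finger on the delicate point: passing from a $\nu^{2a}$-twisted Shalika functional back to an untwisted one. But the way you propose to handle that step is where the proof breaks down, and it is also where your approach diverges from the paper's.

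The difficulty is that your step (ii) tries to prove an ``untwisting'' statement --- that a generic representation with a $\nu^{2a}$-twisted Shalika functional automatically has an untwisted one --- as a separate lemma, and then reaches for either a non-vanishing of the local exterior square $L$-factor or a citation of the Shalika/linear equivalence with unramified-twist insensitivity. Neither of these is in usable form. The characterization of Shalika-distinction via poles of the exterior square $L$-function that you gesture at (e.g.\ in \cite{Matringe-Linear+Shalika-JNT}) is stated for essentially square-integrable representations, not for arbitrary generic representations; and the ``insensitivity to unramified twists'' of the Shalika or $H_{m,m}$-period is precisely the content of the corollary you are trying to prove, so citing it would be circular. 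In short, you have turned one of the corollary's conclusions into a hypothesis of the proof.

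The paper avoids this entirely by applying Gan's proposition to $(\nu^a\pi)^\vee$ rather than to $\pi^\vee$. Since $\nu^a\pi$ is still generic, Minguez's result gives $\Theta((\nu^a\pi)^\vee)=\nu^a\pi$, and the chain
\begin{align*}
\Hom_{H_{m,m}}(\pi,\mu_a)\cong\Hom_{H_{m,m}}(\nu^a\pi,\nu^{2a}\boxtimes\BC)
\cong\Hom_{S_n}(\nu^a\pi,\nu^{2a}\boxtimes\psi_{S_n})\cong\Hom_{S_n}(\pi,\psi_{S_n})
\end{align*}
is a sequence of isomorphisms in which the parameter $a$ drops out identically: the last step is just the observation that on $S_n$ one has $\det\pmtrix{g}{b}{0}{g}=\det(g)^2$, so the twist $\nu^a$ of $\pi$ restricted to $S_n$ is exactly the character $\nu^{2a}\boxtimes\mathbf 1$ one is dividing by. No $L$-function input and no separate untwisting lemma are needed; statements (1), (2), (3) are all literally the same vector space. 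Your opening observation --- that the twist lives on the $G_m$-factor --- was the right one; carrying the $\nu^a$-twist of $\pi$ through the whole chain instead of trying to remove it at the Shalika end would close your gap and would in fact reproduce the paper's proof. The ``in particular'' clause you handle correctly once the equivalence is established, since it reduces to the $a=0$ case and the Jacquet--Rallis lemma.
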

\begin{proof}
    As $\pi$ is generic, its twist $\nu^a \pi$, for $a \in \BR$, is also generic. So
	\begin{align*}
		\Hom_{H_{n,n}}(\pi,\mu_a) & = \Hom_{H_{n,n}}( \pi , \nu^a \boxtimes \nu^{-a}) \cong \Hom_{H_{n,n}} (\nu^{a}\pi, \nu^{2a}\boxtimes \BC) \\
		& \cong \Hom_{S_{2n}} (\nu^{a}\pi , \nu^{2a} \boxtimes \psi_{S_{2n}})  \cong \Hom_{S_{2n}} (\pi , \psi_{S_{2n}}).
	\end{align*}
\end{proof}



\subsection{The theory of Bernstein-Zelevinsky derivatives}
Let $P_n \subset G_n$ be the mirabolic subgroup of $G_n$ consisting of matrices with the last row $(0,0,\cdots,0,1)$. We refer the reader to \cite[3.2]{BZ-I} for the definition of the following functors 
\begin{align*}
	&\Psi^-: \textup{Alg}\,P_n \ra \textup{Alg}\, G_{n-1},  \qquad  \Psi^+ : \textup{Alg}\, G_{n-1} \ra \textup{Alg}\,P_n,   \\
	&\Phi^-: \textup{Alg}\, P_n \ra \textup{Alg}\, P_{n-1}, \qquad  \Phi^+ : \textup{Alg}\, P_{n-1} \ra \textup{Alg}\,P_n.
\end{align*}
Define $\pi^{(k)}  =  \Psi^-(\Phi^-)^{k-1}\left(\pi|_{P_n} \right)$ to be the $k$-th derivative of a representation $\pi$ of $G_n$.

The following proposition can be proved by the same argument as those in \cite[Proposition 1]{Kable-Asai-L-AJM} (see also \cite[Proposition 3.1]{Matringe-Linear+Shalika-JNT}, where the linear subgroups $H_{p,q}$ take different forms.)

\begin{prop}\label{prop::Derivative--MainFormula}
	If $\sigma$ is a representation of $P_{n-1}$ and $\chiup$ is a character of $H_{p,q}$, then
	\begin{align*}
		\Hom_{P_n \cap H_{p,q}} (\Phi^+ \sigma , \chiup) \cong \Hom_{P_{n-1} \cap H_{q-1,p}} (\sigma , \chiup^{w_{q-1,p}}\mu_{-1/2})
	\end{align*}
	as complex vector spaces, where $\chiup^{w_{q-1,p}}$ is the character of $H_{q-1,p}$ defined by $\chiup^{w_{q-1,p}}(g) = \chiup (w_{q-1,p} \shskip g \shskip w_{q-1,p}^{-1})$. In particular, for all $a \in \BR$, one has 
	\begin{align}\label{formula::Derivative-MainFormula}
		\Hom_{P_n \cap H_{p,q}} (\Phi^+ \sigma , \mu_a) \cong \Hom_{P_{n-1} \cap H_{q-1,p}} (\sigma , \mu_{-a-1/2}).
	\end{align}
\end{prop}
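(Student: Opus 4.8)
The plan is to unwind the definition of the functor $\Phi^+$ from \cite[3.2]{BZ-I} and compute the Hom space by Frobenius reciprocity, following the model computation of Kable \cite[Proposition 1]{Kable-Asai-L-AJM}. Recall that $\Phi^+\sigma = \textup{ind}_{P_{n-1}U_n}^{P_n}(\sigma \otimes \psi)$ (compactly supported, normalized induction), where $U_n \cong F^{n-1}$ is the abelian unipotent radical of the mirabolic $P_n$ sitting inside the $(n-1,1)$-parabolic, and $\psi$ is a nondegenerate character of $U_n$ extended trivially. The first step is therefore to realize $\Phi^+\sigma$ as functions on $P_n$ transforming on the left under $P_{n-1}U_n$, and to analyze the double cosets $(P_{n-1}U_n)\backslash P_n / (P_n \cap H_{p,q})$. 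The key geometric input is that there is a single relevant open orbit: the subgroup $P_n \cap H_{p,q}$ acts on the flag variety underlying this induction with an orbit structure in which exactly one orbit contributes, and one identifies the stabilizer on that orbit with a conjugate of $P_{n-1} \cap H_{q-1,p}$ — the index shift $p,q \rightsquigarrow q-1,p$ and the appearance of $w_{q-1,p}$ come precisely from which coordinate hyperplane the last basis vector lands in relative to the block decomposition.

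Second, once the orbit and stabilizer are identified, Frobenius reciprocity (in the form appropriate to compact induction against a closed subgroup, i.e. the Bernstein–Zelevinsky second adjointness/Mackey machinery of \cite[\S 2]{BZ-I}) converts $\Hom_{P_n \cap H_{p,q}}(\Phi^+\sigma,\chiup)$ into $\Hom$ over the stabilizer subgroup of $\sigma$ twisted by $\chiup$ restricted (and conjugated by $w_{q-1,p}$) against the relevant modular-character factor. The third step is the bookkeeping of modular characters: the normalization of $\Phi^+$ contributes a factor of $\nu^{1/2}$ (this is exactly why the $\mu_{-1/2}$ appears — $\Phi^+$ is the $\Phi^+$ of \cite{BZ-I}, normalized so that it sends unitary to unitary, and the discrepancy between the modular character of $P_{n-1}U_n$ in $P_n$ and that of the stabilizer in $P_n \cap H_{p,q}$ collapses to a single power of $\nu$ on the $G_{q-1}$-block). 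Tracking this factor carefully, together with the character $\psi$ of $U_n$ which must be trivial on the stabilizer for the Hom space to be nonzero (and it is, since the stabilizer lands in $P_{n-1}$), yields the twist $\chiup^{w_{q-1,p}}\mu_{-1/2}$.

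The second assertion, equation \eqref{formula::Derivative-MainFormula}, is then a direct specialization: taking $\chiup = \mu_a$ on $H_{p,q}$, one computes $\mu_a^{w_{q-1,p}}$ on $H_{q-1,p}$ — conjugation by $w_{q-1,p}$ swaps the two blocks, sending $\mu_a$ to $\mu_{-a}$ — and then $\mu_{-a}\cdot\mu_{-1/2} = \mu_{-a-1/2}$ since the characters $\mu_\bullet$ add in the subscript. The main obstacle I expect is the orbit analysis of $(P_{n-1}U_n)\backslash P_n / (P_n \cap H_{p,q})$ and the verification that only one orbit survives (the others contributing zero because the relevant stabilizer fails to kill $\psi$, or because of a dimension/support obstruction); this is the geometric heart of the statement, and it is exactly the point where \cite[Proposition 1]{Kable-Asai-L-AJM} and \cite[Proposition 3.1]{Matringe-Linear+Shalika-JNT} do the analogous work, so I would either cite their argument verbatim (as the paper proposes to do) or reproduce it with the indices of $H_{p,q}$ adapted to the block-diagonal form used here rather than the anti-diagonal form of \cite{Matringe-Linear+Shalika-JNT}. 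Everything else — the Frobenius reciprocity and the modular-character arithmetic — is routine once the single-orbit reduction is in hand.
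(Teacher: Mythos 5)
Your overall strategy is the same one the paper points to: unwind $\Phi^+$ as a compact induction from $P_{n-1}V_n$ to $P_n$, stratify by double cosets, show exactly one stratum carries a nonzero $\chi$-eigenfunctional, and then apply Frobenius reciprocity with careful modular-character bookkeeping. The paper gives no details and simply cites the argument of Kable and of Matringe, so your write-up is a fill-in of exactly what those references do; the predicted stabilizer $P_{n-1}\cap H_{q-1,p}$, the appearance of $w_{q-1,p}$, and the final arithmetic $\mu_a^{w_{q-1,p}}\mu_{-1/2}=\mu_{-a-1/2}$ are all correct.

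There is, however, one concrete misstatement that would derail the computation if taken literally: you assert the contributing orbit is the \emph{open} one. It is not. Identifying $(P_{n-1}V_n)\backslash P_n / (P_n\cap H_{p,q})$ with $H_{p,q-1}$-orbits on nonzero row vectors $(u_1,u_2)\in F^p\oplus F^{q-1}$, there are (for $p>0$, $q>1$) three orbits, and the open one has both $u_1,u_2\neq 0$. Its stabilizer contains the full unipotent $V_n\cap(P_n\cap H_{p,q})\cong\{0\}\oplus F^{q-1}$, on which the conjugated character $\theta^x(0,v_2)=\psi(u_2\cdot v_2)$ is \emph{nontrivial} precisely because $u_2\neq 0$; hence the open orbit kills the functional and contributes zero. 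The same reasoning eliminates the orbit $u_1=0$. The surviving orbit is $u_1\neq 0$, $u_2=0$, of codimension $q-1$, and it is on this non-open orbit that $\theta^x$ becomes trivial. Its stabilizer modulo the unipotent part is $P_p\times G_{q-1}$, which conjugated by $w_{q-1,p}$ gives $G_{q-1}\times P_p = P_{n-1}\cap H_{q-1,p}$ as required. Notice the inconsistency in your own draft: the stabilizer of the genuine open orbit would be (a conjugate of) $P_p\times P_{q-1}$, not $G_{q-1}\times P_p$; the stabilizer you name is correct but belongs to the non-open orbit. If you replace ``open orbit'' by ``the orbit with $u_2=0$'' and note that the other two orbits vanish because $\theta^x$ restricts nontrivially to the $F^{q-1}$-block of $V_n$, the rest of your outline (Frobenius, the $\nu^{1/2}$ from the normalization of $\Phi^+$, and the block swap) goes through and matches the argument the paper is citing.
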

As a corollary, we have the following result due to Matringe \cite[Theorem 3.1]{Matringe-Linear+Shalika-JNT}.
\begin{cor}\label{cor::discrete series--max Levi}
	Let $\Delta$ be an essentially square-integrable representation of $G_n$. Let $p$, $q$ be two positive integers with $p + q =n$, and $\chiup$ a character of $H_{p,q}$. Assume that $\pi$ is $(H_{p,q},\chiup)$-distinguished. Then $p=q$.
\end{cor}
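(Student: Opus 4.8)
The plan is to deduce Corollary \ref{cor::discrete series--max Levi} from the derivative formula in Proposition \ref{prop::Derivative--MainFormula} together with the classification of the mirabolic structure of an essentially square-integrable representation via Bernstein--Zelevinsky derivatives. First I would reduce the global distinction statement to a statement about $P_n$: since $\Delta$ is $(H_{p,q},\chiup)$-distinguished, the restriction $\Delta|_{P_n}$ is $(P_n\cap H_{p,q},\chiup|_{P_n\cap H_{p,q}})$-distinguished, so it suffices to analyze $\Hom_{P_n\cap H_{p,q}}(\Delta|_{P_n},\chiup)$. By Lemma \ref{lem::symmetry}(1) and a twist by $\nu^s$ (which does not affect square-integrability up to a character and does not affect the $H_{p,q}$-orbit picture), I may assume $\chiup=\mu_a$ for a suitable $a$, and WLOG $q\leqslant p$.

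The key input is the well-known filtration of $\Delta|_{P_n}$ coming from the Bernstein--Zelevinsky theory: for $\Delta=\Delta([a,b]_\rho)$ with $\rho\in\mathscr C(G_r)$ and $l(\Delta)=k$ (so $n=rk$), the only nonzero derivatives of $\Delta$ are $\Delta^{(0)}=\Delta$ and the ``highest derivative'' $\Delta^{(r)}=\Delta([a,b-1]_\rho)$; consequently $\Delta|_{P_n}$ has a two-step filtration whose pieces are, roughly, $(\Phi^+)^{r-1}\Psi^+$ applied to (a twist of) $\Delta([a,b-1]_\rho)|_{P_{n-r}}$ built up from the highest derivative, together with the piece $(\Phi^+)^{n-1}$ of the ``top'' derivative $\Delta^{(n)}$ when $r=n$ (the supercuspidal case). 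Iterating Proposition \ref{prop::Derivative--MainFormula} along each string of $\Phi^+$'s, each application of \eqref{formula::Derivative-MainFormula} swaps $(p,q)\to(q-1,p)$ and shifts the exponent by $-1/2$; tracking this bookkeeping through $r-1$ (or $n-1$) steps, one finds that a nonzero $\Hom$-space forces the pair of block sizes that appears after the full descent to be consistent, i.e.\ forces $p$ and $q$ to balance. In the supercuspidal base case, the surviving piece is $(\Phi^+)^{n-1}$ of the trivial representation of $G_0$, and $\Hom_{P_n\cap H_{p,q}}$ of that against $\mu_a$ is computed directly by the iterated formula to be nonzero only when the descent terminates correctly, which pins down $p=q$; in the general ladder/segment case one reduces to this by induction on $k=l(\Delta)$ using that $\Delta^{(r)}$ is again essentially square-integrable of strictly smaller length, so its distinction by Corollary \ref{cor::discrete series--max Levi} (inductive hypothesis) already forces balanced block sizes, and the exponent shift is compatible.

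Concretely, I would argue as follows. Write $\Delta|_{P_n}$ in its Bernstein--Zelevinsky filtration. For each subquotient of the form $(\Phi^+)^{j}\Psi^+(\tau)$ with $\tau$ a representation of $G_{m}$ (here $j+m=n-1$ for one piece and we also allow $\tau$ a representation of $P_{m+1}$ handled similarly), apply \eqref{formula::Derivative-MainFormula} $j$ times to get
\begin{align*}
\Hom_{P_n\cap H_{p,q}}\bigl((\Phi^+)^{j}\Psi^+(\tau),\mu_a\bigr)\cong \Hom_{P_{n-j}\cap H_{p',q'}}\bigl(\Psi^+(\tau),\mu_{a'}\bigr),
\end{align*}
where $(p',q')$ is obtained from $(p,q)$ by $j$ iterations of $(p,q)\mapsto(q-1,p)$ and $a'=\pm a-j/2$. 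Then $\Psi^+$-distinction is the statement that $\tau$ itself is $(H_{p'',q''},\mu_{a''})$-distinguished for the truncated block sizes $(p'',q'')=(p'-1,q')$ or $(p',q'-1)$ (one of the two must be nonnegative). Applying the inductive hypothesis to $\tau$ — which is $0$, or the trivial representation of $G_0$, or an essentially square-integrable representation of strictly smaller degree, or a product thereof in the ladder generalization — forces $p''=q''$, and then unwinding the recursion $(p,q)\mapsto(q-1,p)$ backwards forces $p=q$. The piece coming from $\Delta^{(n)}$ when $\Delta$ is supercuspidal is the base case, where $\tau$ is the trivial representation of $G_0$ and the computation of $\Hom_{P_1\cap H_{1,0}}(\Psi^+(\mathbf 1),\mu_{a''})$ is immediate.

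The main obstacle I expect is the combinatorial bookkeeping of the two interleaved recursions: the $(p,q)\mapsto(q-1,p)$ flip on block sizes and the $a\mapsto -a-1/2$ flip on the character, over a string of length up to $n-1$. One has to verify that a nonzero $\Hom$-space at the bottom of the descent really does pull back to the constraint $p=q$ at the top, and in particular that no parasitic contributions from the ``wrong'' filtration piece can rescue an unbalanced $(p,q)$ — this is exactly the content one wants, and making it airtight requires carefully identifying which derivative pieces of $\Delta|_{P_n}$ are nonzero (only the $0$-th and the highest, for an essentially square-integrable $\Delta$) and then checking the parity/index matching after the iterated applications of \eqref{formula::Derivative-MainFormula}. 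A secondary subtlety is the reduction from a general character $\chiup$ of $H_{p,q}$ to $\mu_a$ and keeping track of the central character contributions of $\Delta$ under the $\nu$-twists, but this is routine given Lemma \ref{lem::one dimensional--distinction} and Lemma \ref{lem::symmetry}.
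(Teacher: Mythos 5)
The paper does not actually supply a proof of this corollary: it is stated as an immediate consequence of Proposition~\ref{prop::Derivative--MainFormula} and is attributed to Matringe's Theorem~3.1. So there is no in-paper proof to compare against; the intended argument is the derivative-theoretic one you outline. Your high-level strategy (restrict to $P_n$, use the Bernstein--Zelevinsky filtration, iterate~\eqref{formula::Derivative-MainFormula}, and induct on the segment length with the supercuspidal case as base) is indeed the right route. But there are genuine gaps and misstatements that would have to be repaired before this becomes a proof.

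First, the description of the derivative structure of $\Delta$ is incorrect. For $\Delta=\Delta([a,b]_\rho)$ with $\rho\in\mathscr C(G_d)$ and $l(\Delta)=l$, the nonzero derivatives are \emph{all} of $\Delta^{(id)}$ for $i=0,1,\dots,l$, not only $\Delta^{(0)}$ and $\Delta^{(d)}$. Accordingly $\Delta|_{P_n}$ has a filtration with $l$ nontrivial BZ-pieces $(\Phi^+)^{id-1}\Psi^+(\Delta^{(id)})$, not a ``two-step filtration.'' (Also, with the paper's conventions -- compare Lemma~\ref{lem::ladder-derivative-left aligned} -- one has $\Delta^{(d)}=\Delta([a+1,b]_\rho)$, not $\Delta([a,b-1]_\rho)$.) You do hedge later by allowing $\tau$ to be an arbitrary essentially square-integrable representation of smaller degree, but the inconsistency makes the induction as stated unreliable: a linear functional on $\Delta|_{P_n}$ may be supported on any of the $l$ pieces, and the argument must genuinely analyze all of them (or argue that only certain pieces can contribute), not just the first and last.

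Second, and more seriously, the central claim -- ``tracking this bookkeeping through $r-1$ (or $n-1$) steps, one finds that a nonzero $\Hom$-space forces $p$ and $q$ to balance'' -- is asserted, not demonstrated. This is exactly where the content lies, and it is not automatic. Iterating~\eqref{formula::Derivative-MainFormula} replaces $(p',q')$ by $(q'-1,p')$ at each step, so after $2m$ steps one sits at $(p-m,q-m)$ and after $2m+1$ steps at $(q-m-1,p-m)$. The step is only available when the ``$q$-slot'' is $\geqslant 1$, and one must argue that the $\Hom$-space vanishes when that condition fails (for instance because $\Phi^+\sigma$, being induced from a nondegenerate character of the unipotent radical of $P_n$, has no semi-invariant linear functionals under the full $P_n$). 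Without that vanishing statement, the descent alone does not force $p=q$: e.g.\ for a supercuspidal of $G_3$ and $(p,q)=(2,1)$, blindly running the recursion lands at $\Hom_{P_1\cap H_{1,0}}(\Psi^+(\mathbf 1),\cdot)\cong\BC\neq 0$, so a vanishing criterion for the boundary case $q'=0$ is essential and must be made explicit. In short, the plan identifies the right tools but does not yet control which BZ-pieces can carry a functional, nor does it verify the boundary behaviour of the derivative formula needed to conclude $p=q$.
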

Another application of Proposition \ref{prop::Derivative--MainFormula} will generalize Corollary \ref{cor::discrete series--max Levi} to essentially Speh representations in Corollary \ref{cor::Speh + distinguishe = max Levi G_m,m} of Section \ref{section::Speh--MaxLevi}. 

As a direct consequence of Corollary \ref{cor::discrete series--max Levi} and Corollary \ref{cor::corollary of Gan}, we have:
\begin{prop}\label{prop::starting point}
	Let $\Delta$ be an essentially square-integrable representation of $G_n$. If $\Delta$ is $(H_{p,q},\mu_a)$-distinguished for two positive integers $p$, $q$ with $p + q = n$ and some $a \in \BR$, then $p = q$ and $\Delta$ is $H_{p,p}$-distingusihed (hence self-dual).
\end{prop}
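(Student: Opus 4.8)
The plan is to deduce Proposition \ref{prop::starting point} almost immediately from the two results just quoted. Suppose $\Delta$ is an essentially square-integrable representation of $G_n$ that is $(H_{p,q},\mu_a)$-distinguished for positive integers $p,q$ with $p+q=n$ and some $a\in\BR$. Corollary \ref{cor::discrete series--max Levi} applies verbatim (with $\chiup=\mu_a$) and gives $p=q$; write $p=q=m$, so $n=2m$. It remains to show $\Delta\cong\Delta^{\vee}$.

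For the self-dualness, the idea is to pass through the generic picture via Corollary \ref{cor::corollary of Gan}. An essentially square-integrable representation $\Delta$ is generic, so Corollary \ref{cor::corollary of Gan} applies: the hypothesis that $\Delta$ is $(H_{m,m},\mu_a)$-distinguished for some $a$ is condition (1), which is equivalent to condition (3), namely that $\Delta$ is $(S_n,\psi_{S_n})$-distinguished; and the final assertion of that corollary then yields that $\Delta$ is self-dual. Thus both conclusions follow, and the proof is just the concatenation "$p=q$ by Corollary \ref{cor::discrete series--max Levi}, self-dual by Corollary \ref{cor::corollary of Gan}."

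I do not anticipate a genuine obstacle here; the only point requiring a word of care is making sure the hypotheses of the two cited corollaries are literally met. Corollary \ref{cor::discrete series--max Levi} is stated for an arbitrary character $\chiup$ of $H_{p,q}$, so specializing to $\chiup=\mu_a$ is legitimate. Corollary \ref{cor::corollary of Gan} is stated for generic representations of $G_n$ with $n=2m$ even; an essentially square-integrable representation of $G_n$ is generic, and the conclusion $p=q$ from the previous step guarantees that $n$ is even, so the corollary is applicable exactly after the first step has been carried out. One should therefore order the argument as: first invoke Corollary \ref{cor::discrete series--max Levi} to force $n=2m$, and only then invoke Corollary \ref{cor::corollary of Gan} on $G_{2m}$ to extract self-dualness. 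No additional computation is needed.
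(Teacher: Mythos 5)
Your proposal is exactly the paper's argument: the paper supplies no written proof and simply remarks before the statement that it is ``a direct consequence of Corollary \ref{cor::discrete series--max Levi} and Corollary \ref{cor::corollary of Gan},'' which is precisely the two-step concatenation you give. Your ordering remark (first force $n=2m$, then apply the generic corollary on $G_{2m}$) is the right way to make the implicit hypothesis of Corollary \ref{cor::corollary of Gan} explicit.
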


\section{Symmetric Spaces and Parabolic Orbits}\label{section::Parabolic orbits of symmetric space}

The main tool we use to classify distinguished unitary representations is the geometric lemma of Bernstein and Zelevinsky \cite[Theorem 5.2]{BZ-I}. Applying it requires a detailed analysis of the double coset space $P \backslash G_n / H_{p,q}$, where $P$ is a parabolic subgroup of $G_n$. As $H_{p,q}$ is a symmetric subgroup of $G_n$, we follow the framework given by Offen in \cite{Offen-ParabolicInduction-JNT}. 

\subsection{General notations}

Let $G = G_{n}$, $H = H_{p,q}$ be the subgroup of $G_n$ as in the introduction. Let 
\begin{align*}
	\varepsilon = \varepsilon_{p,q} = \left( \begin{matrix}
		I_p &   \\   & -I_q \end{matrix} \right),
\end{align*}
and $\theta = \theta_{p,q}$ be the involution on $G_n$ defined by $\theta (g) = \varepsilon  \shskip g \shskip \varepsilon^{-1}$. The symmetric space associated to $(G,\theta)$ is 
\begin{align*}
	X = \{ g \in G \ |\ \theta (g) = g^{-1} \},
\end{align*}
equipped with the $G$-action $g \cdot x = g x \theta(g)^{-1}$. The map $g \mapsto g \cdot e$ gives a bijection of the coset space $G/H$ onto the orbit $G \cdot e \subset X$, and thus a bijection of the double coset space $P \backslash G / H$ onto the $P$-orbits in $G \cdot e$, where $P$ denotes a parabolic subgroup of $G$. For any $g \in G$, denote by $[g]_G$ the conjugacy class of $g$ in $G$. Note that the map $g \mapsto g \varepsilon$ gives a bijection of $G \cdot e $ onto $[\varepsilon]_G$ and that the $G$-action on $G \cdot e$ is transformed to the conjugation action of $G$ on $[\varepsilon]_G$. 

For any subgroup $Q $ of $G$ and $x \in X$, let $Q_x = \{ g \in Q \ |\ g \cdot x = x \}$ be the stabilizer of $x$ in $Q$. Note that $Q_x$ is just the centralizer of $x \varepsilon$ in $Q$.

\subsection{Twisted involutions in Weyl groups}

A first coarse classification of the double cosets in $P \backslash G / H$ is given by certain Weyl group elements. Let $W$ be the Weyl group of $G$. Let
\begin{align*}
	W[2] = \{w \in W\ |\ w \theta(w) = e \} = \{w \in W \ |\ w^2 = e  \}
\end{align*}
be the set of twisted involutions in $W$. For two standard Levi subgroups $M$ and $M'$ of $G$, let $\tensor[_M]{W}{_{M'}}$ be the set of all $w \in W$ that are left $W_M$-reduced and right $W_{M'}$-reduced.

Given a standard parabolic subgroup  $P = M \ltimes U$, define a map
\begin{align}\label{formula::parabolic orbit-Weyl}
	\iota_M \colon P\backslash X \rightarrow W[2] \cap \tensor[_M]{W}{_M}
\end{align}
by the relation 
\begin{align}
	PxP = P \shskip \iota_M(P \cdot x) \shskip P.
\end{align}

For $x \in X$, let
\begin{align*}
	w = \iota_M(P \cdot x)  \text{\quad and \quad} L = M(w) = M \cap wMw^{-1}.
\end{align*}
Then $L$ is a standard Levi subgroup of $M$ satisfying $L = w L w^{-1}$.

\subsection{Admissible orbits}\label{section::Admissible orbits}

It is noted in \cite{Offen-ParabolicInduction-JNT} that, to apply the geometric lemma in particular cases, it is necessary to first understand the admissible orbits. Recall that $x \in X$ (or a $P$-orbit $P \cdot x$ in X) is said to be $M$-admissible if $ M =  w M w^{-1}$ where $w = \iota_M(P \cdot x)$. We now describe the relevant data for $M$-admissible $P$-orbits in $G \cdot e$.

By \cite[Corollary 6.2]{Offen-ParabolicInduction-JNT}, $M$-admissible $P$-orbits in $G \cdot e$ is in bijection with $M$-orbits in $G \cdot e \cap N_G(M)$, or equivalently $M$-conjugacy classes in $[\varepsilon]_G \cap N_G(M)$.


Fix a composition $\bar{n}=(n_1,\cdots,n_t)$ of $n$. Let $P = M \ltimes U$ be the standard parabolic subgroup of $G_n$ associated to $\bar{n}$. Denote by $\frS_t^{(\bar{n})}$ the set of permutations $\tau$ on the set $\{ 1, 2, \cdots ,t\}$ such that $n_i = n_{\tau(i)}$ for all $i \in \{1, \cdots ,t \}$. To each  $\tau$ in $\mathfrak{S}_t^{(\bar{n})}$, we associate a block matrix $w_{\tau}$ which has $I_{n_i}$ on its $(\tau(i),i)$-block for each $i$ and has $0$ elsewhere. Then the map 
\begin{align*}
	\tau \mapsto w_{\tau}M
\end{align*}
defines an isomorphism of groups from $\frS_t^{(\bar{n})}$ to $N_G(M) / M$. Write an element of $M$ as  $\diag\{A_1,\cdots,A_t\}$. Note that an element $w_{\tau}\shskip \diag\{A_1,\cdots,A_t \}$ of $N_G(M)$ has order $2$ if and only if 
\begin{align*}
	\tau^2 = 1 \quad \text{and} \quad A_iA_{\tau(i)} = I_{n_i}~\ \text{for all } i \in \{ 1,\cdots ,t \}.
\end{align*}
One sees that the $M$-conjugacy classes in $[\varepsilon]_{G} \cap N_G(M)$ are parameterized by the set of pairs $(\mathfrak{c}_{\tau},\tau)$ where $\tau \in \frS_t^{(\bar{n})}$, $\tau^2 = 1$, and $\mathfrak{c}_{\tau}$ is a set of the form
\begin{align*}
	\{(n_{k,+}, n_{k,-}) \ |\ \text{for all $k$ such that $\tau(k) = k$} \}
\end{align*}
such that
\begin{align}
	\begin{cases*}
		n_k = n_{k,+} + n_{k,-},\ n_{k,+},n_{k,-} \geqslant 0;\\
		\sum_{k,\tau(k)= k} n_{k,+} + \sum_{(i,\tau(i)),i < \tau(i)} n_i  =  p;\\
		\sum_{k,\tau(k)= k} n_{k,-} + \sum_{(i,\tau(i)),i < \tau(i)} n_i  =  q.
	\end{cases*}
\end{align}
Denote by $\mathcal{I}_{p,q}^{\sharp}(\bar{n})$ the set of all such pairs.

For the $M$-admissible $P$-orbit $\mathcal{O}$ corresponding to $(\mathfrak{c}_{\tau},\tau)$ in $\mathcal{I}_{p,q}^{\sharp}(\bar{n})$, we can choose a natural orbit representative $ x = x_{(\mathfrak{c}_{\tau},\tau)} \in \mathcal{O} \cap N_G(M)$ as follows: The matrix $x \shskip \varepsilon$ has $I_{n_i}$ on its $(\tau(i),i)$-block when $\tau(i) \neq i$, $\diag(I_{n_{i,+}}, -I_{n_{i,-}})$ on its $(i,i)$-block when $\tau(i) = i$, and $0$ elsewhere. One sees easily that $M_x$ consists of elements $\diag\{A_1,\cdots,A_t  \}$ such that
\begin{align}
	\begin{cases}
		A_i  =   A_{\tau(i)},    &  \tau(i) \neq i; \\
		A_i I_{n_{i,+},n_{i,-}}  =  I_{n_{i,+},n_{i,-}} A_i &  \tau(i) = i. 
	\end{cases}
\end{align}
Here and in what follows, we denote by $I_{n_1,n_2}$ for the diagonal matrix $\diag\{I_{n_1}, -I_{n_2} \}$. Thus, when $\tau(i) = i$, we may further write $A_i$ as $\diag\{A_{i,+},A_{i,-} \}$. One also has $P_x  =  M_x \ltimes U_x$. 

The following computation of modular characters is indispensable for applications of the geometric lemma, see \cite[Theorem 4.2]{Offen-ParabolicInduction-JNT}. We omit the proof here as it is obtained by a routine calculation.

\begin{lem}\label{lem::modulus character-Admissible orbit}
	Let $x \in G\cdot e \cap N_G(M)$ be the representative as above of the $M$-admissible $P$-orbit corresponding to $(\mathfrak{c},\tau) \in \mathcal{I}_{p,q}^{\sharp}(\bar{n})$. Then, for $m = \textup{diag}\{A_1,\cdots,A_t\} \in M_x$, we have
	\begin{align}
		\delta_{P_x}\delta_P^{-1/2}(m) = \prod_{\stackrel{i<j}{\tau(i) = i, \tau(j) = j}}& \nu(A_{i,+})^{(n_{j,+} - n_{j,-})/2}  \nu(A_{i,-})^{(n_{j,-} - n_{j,+})/2}   \nu(A_{j,+})^{(n_{i,-} - n_{i,+})/2}   \\
		&  \cdot  \nu(A_{j,-})^{(n_{i,+} - n_{i,-})/2}  \prod_{\stackrel{i < j}{\tau(i) > \tau(j)}} \nu(A_i)^{-n_j/2} \nu(A_j)^{n_i/2}.  \nonumber
	\end{align}
\end{lem}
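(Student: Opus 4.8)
The plan is to compute both modular characters $\delta_P$ and $\delta_{P_x}$ directly from their definitions and take the quotient. Recall that for a group $Q = L \ltimes V$ with $L$ reductive and $V$ unipotent, the modular character is $\delta_Q(l) = |\det(\mathrm{Ad}(l)|_{\mathrm{Lie}(V)})|$. So the entire computation reduces to understanding the adjoint action of the torus-like elements $m = \diag\{A_1,\dots,A_t\} \in M_x$ on the relevant Lie algebras: $\mathrm{Lie}(U)$ for $\delta_P$, and $\mathrm{Lie}(U_x)$ for $\delta_{P_x}$. Since $U$ is the unipotent radical of the standard parabolic $P$ attached to $\bar n = (n_1,\dots,n_t)$, its Lie algebra decomposes as $\bigoplus_{i<j} \mathrm{Hom}(F^{n_j}, F^{n_i})$ (the strictly-upper block entries), and $\mathrm{Ad}(m)$ acts on the $(i,j)$-block by $X \mapsto A_i X A_j^{-1}$, contributing $\nu(A_i)^{n_j}\nu(A_j)^{-n_i}$ to $\delta_P(m)$.

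Next I would identify $\mathrm{Lie}(U_x)$ explicitly. Since $P_x = M_x \ltimes U_x$ with $U_x = P_x \cap U$, and $U_x$ is cut out inside $U$ by the fixed-point condition of $\mathrm{Ad}(x\varepsilon)$ (equivalently the involution $g \mapsto x\varepsilon g (x\varepsilon)^{-1}$ composed appropriately), the space $\mathrm{Lie}(U_x)$ is a sum of pieces indexed by the orbits of the involution $\tau$ on pairs of blocks, further refined by the $\pm$ splitting $n_k = n_{k,+}+n_{k,-}$ at the $\tau$-fixed indices. Concretely: for a pair $i<j$ with $\tau(i),\tau(j)$ both lying in a $\tau$-orbit structure, the $\mathrm{Ad}(m)$-eigenvalue on the surviving subspace of $\mathrm{Hom}(F^{n_j},F^{n_i})$ must be read off after imposing $A_i = A_{\tau(i)}$, and after splitting $A_k = \diag\{A_{k,+},A_{k,-}\}$ at fixed indices. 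This is where the two product ranges in the statement come from — the first product over pairs $i<j$ with $\tau(i)=i,\tau(j)=j$ (both fixed, so both split), the second over pairs $i<j$ with $\tau(i)>\tau(j)$ (capturing the blocks that get "folded" by $\tau$ and where only half survives, hence the factor-of-$2$ asymmetry in exponents is already present but the $\nu(A_i)^{-n_j/2}\nu(A_j)^{n_i/2}$ form reflects that exactly half of $\mathrm{Hom}(F^{n_j},F^{n_i})$ remains after the $\tau$-twist).

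The main bookkeeping obstacle — and the reason the paper says "routine calculation" — is correctly enumerating which sub-blocks of $\mathrm{Lie}(U)$ survive in $\mathrm{Lie}(U_x)$ and with which $\mathrm{Ad}(m)$-weight, while carefully tracking the $\delta_P^{-1/2}$ normalization so that the final formula is genuinely $\delta_{P_x}\delta_P^{-1/2}$ rather than $\delta_{P_x}\delta_P^{-1}$. The cleanest way to organize this is to split $\mathrm{Lie}(U) = \bigoplus_{i<j}\mathfrak{u}_{ij}$ and, for each pair $(i,j)$, determine the contribution to $\delta_{P_x}\delta_P^{-1/2}(m)$ by comparing the $\mathrm{Ad}(m)$-trace on $\mathfrak{u}_{ij}\cap\mathrm{Lie}(U_x)$ against half the $\mathrm{Ad}(m)$-trace on all of $\mathfrak{u}_{ij}\oplus\mathfrak{u}_{ji}$ (pairing up $(i,j)$ with the $\tau$-image pair). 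After this case analysis — (a) both $i,j$ fixed by $\tau$; (b) exactly one fixed; (c) neither fixed, with the sub-cases distinguished by whether $\tau(i) \lessgtr \tau(j)$ — the terms that don't cancel assemble into precisely the displayed expression. Given the "routine calculation" remark, I would present this as a brief indication of the weight computation on $\mathrm{Lie}(U)$ versus $\mathrm{Lie}(U_x)$ and refer to \cite[Theorem 4.2]{Offen-ParabolicInduction-JNT} for the analogous general formula, rather than writing out all three cases in full.
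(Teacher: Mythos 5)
Since the paper explicitly omits this proof as a ``routine calculation,'' there is no reference argument to compare against. Your plan --- read off $\delta_P$ and $\delta_{P_x}$ from $\mathrm{Ad}(m)$ acting on $\mathrm{Lie}(U)=\bigoplus_{i<j}\mathfrak{u}_{ij}$ and on $\mathrm{Lie}(U_x)=\mathrm{Lie}(U)\cap\mathfrak{g}^{\mathrm{Ad}(x\varepsilon)}$, then divide --- is the natural one and, carried out block by block, does yield the stated formula. Two corrections to the narrative, though. First, the governing dichotomy for a pair $i<j$ is $\tau(i)\gtrless\tau(j)$, not the number of $\tau$-fixed indices among $\{i,j\}$: when $\tau(i)<\tau(j)$ and they are not both fixed, the contributions from $\mathfrak{u}_{ij}$ and $\mathfrak{u}_{\tau(i),\tau(j)}$ to $\delta_{P_x}$ and to $\delta_P^{-1/2}$ cancel on the nose (using $A_k=A_{\tau(k)}$ and $n_k=n_{\tau(k)}$ on $M_x$); and when exactly one of $i,j$ is $\tau$-fixed you can land on either side of $\tau(i)\gtrless\tau(j)$, so your case (b) needs the same $\lessgtr$ refinement you gave only to case (c). Second, and more substantively, the remark that ``exactly half of $\mathrm{Hom}(F^{n_j},F^{n_i})$ remains after the $\tau$-twist'' when $\tau(i)>\tau(j)$ is not right: in that case $\mathrm{Ad}(x\varepsilon)$ carries $\mathfrak{u}_{ij}$ into the \emph{opposite} nilradical $\overline{\mathfrak{u}}$, so none of $\mathfrak{u}_{ij}$ survives into $\mathrm{Lie}(U_x)$, and the exponent $n_j/2$ in the second product comes entirely from the $\delta_P^{-1/2}$ normalization with no compensating $\delta_{P_x}$-contribution from that block. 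With these adjustments the bookkeeping closes correctly: $\tau(i)=i,\tau(j)=j$ gives the first product (the $(+,+)$ and $(-,-)$ sub-blocks of $\mathfrak{u}_{ij}$ survive and are compared against $\nu(A_i)^{-n_j/2}\nu(A_j)^{n_i/2}$), $\tau(i)>\tau(j)$ gives the second, and all other pairs contribute trivially.
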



\subsection{General orbits}\label{section::general orbits}

For our purposes, we consider only $P$-orbits in $G \cdot e \subset X$ where $P$ is a maximal parabolic subgroup. Let $P = P_{k,n-k}$ be the standard parabolic subgroup associated to $(k,\shskip n-k)$ with $M$ its Levi subgroup. We follow the geometric method as in \cite{Matringe-BF+L-function}. The case where $|p-q|\leqslant 1$ can be essentially covered by the results there. We remark however that the symmetric subgroup $H$ there takes a different form and the treatment here is independent.

Let $V$ be a $n$-dimensional $F$-vector space with a basis $\{e_1,\cdots, e_{n}\}$. Let $V_+$ (resp. $V_-$) be the subspace of $V$ of dimension $p$ (resp. $q$) which is generated by $\{e_1,\cdots,e_p \}$ (resp. $\{ e_{p+1},\cdots, e_n \}$). The coset space $G/ P$ can be identified with the set of subspaces of $V$ of dimension $k$. For such a subspace $W$, set 
$$r_W = \dim_F (W \cap V_+),\quad s_W = \dim_F (W \cap V_-).$$

\begin{lem}\label{lem::H-orbit in Gr(k)}
	Let $W_1$ and $W_2$ be two subspaces of $V$ of dimension $k$. Then they are in the same $H$-orbit if and only if $r_{W_1} = r_{W_2}$ and $s_{W_1}  =  s_{W_2}$. For a pair of nonnegative integers $(r,s)$, there is a subspace $W$ of $V$ such that $r = r_W$ and $s = s_W$ if and only if 
	\begin{align}\label{formula::parameterization of double cosets}
		\begin{cases*}
			r + s \leqslant k, \\
			k - s \leqslant p, \quad k -r  \leqslant q.
		\end{cases*}
	\end{align}
	
\end{lem}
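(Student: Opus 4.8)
The plan is to reduce everything to a normal form for a $k$-dimensional subspace $W \subset V$ with respect to the decomposition $V = V_+ \oplus V_-$, and to prove that the pair of invariants $(r_W, s_W)$ is a complete set of invariants for the $H = H_{p,q}$-action. For the first (classification) assertion, I would argue as follows. The group $H \cong \GL(V_+) \times \GL(V_-)$ acts on $V$ preserving $V_+$ and $V_-$, so $r_W$ and $s_W$ are manifestly $H$-invariant; this gives the ``only if'' direction immediately. For the converse, suppose $r_{W_1} = r_{W_2} = r$ and $s_{W_1} = s_{W_2} = s$. I would choose a basis of $W_i$ adapted to the filtration $W_i \supset W_i \cap (V_+ \oplus V_-) $ — more precisely, pick a basis of $W_i \cap V_+$ (of size $r$), extend by a basis of $W_i \cap V_-$ (of size $s$), and then extend by $k - r - s$ further vectors $u$ whose projections to $V_+$ and to $V_-$ are each linearly independent and independent from the already-chosen vectors. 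Using that $H$ acts as the full $\GL(V_+) \times \GL(V_-)$, one can move the $V_+$-components and $V_-$-components of all these basis vectors to a standard configuration (standard basis vectors of $V_+$ and $V_-$ respectively), simultaneously for both factors; this shows $W_1$ and $W_2$ are carried to the same subspace, hence lie in the same $H$-orbit.

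For the second (existence/parameterization) assertion, I would show that a pair $(r,s)$ of nonnegative integers is realized by some $k$-dimensional $W$ if and only if the inequalities \eqref{formula::parameterization of double cosets} hold. The necessity: from $W \cap V_+$ and $W \cap V_-$ being independent subspaces of $W$ we get $r + s \le \dim W = k$; writing $t = k - r - s \ge 0$ for the dimension of the ``generic'' part, the projection of $W$ to $V_+$ along $V_-$ has image of dimension $r + t = k - s$, which must be $\le \dim V_+ = p$, and symmetrically $k - r \le q$. Sufficiency: given $(r,s)$ satisfying the constraints, set $t = k - r - s \ge 0$; then $r + t \le p$ and $s + t \le q$, so inside $V_+$ we can pick $r + t$ independent vectors $e_1,\dots,e_r, f_1,\dots,f_t$ and inside $V_-$ we can pick $s + t$ independent vectors $e_{p+1},\dots,e_{p+s}, f_1',\dots,f_t'$, and then $W = \operatorname{span}\{e_1,\dots,e_r\} \oplus \operatorname{span}\{e_{p+1},\dots,e_{p+s}\} \oplus \operatorname{span}\{f_1 + f_1',\dots,f_t + f_t'\}$ has dimension $k$ with $W \cap V_+ = \operatorname{span}\{e_1,\dots,e_r\}$ and $W \cap V_- = \operatorname{span}\{e_{p+1},\dots,e_{p+s}\}$ as desired; one checks the intersections are exactly as claimed because a vector of $W$ lying in $V_+$ must have all $f_i + f_i'$-coordinates zero (as the $f_i'$ are independent in $V_-$), and similarly for $V_-$.

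The routine bookkeeping is in verifying the intersection dimensions in the normal-form construction and in checking that the $\GL(V_+) \times \GL(V_-)$-action is transitive on the relevant configurations of independent tuples. The one step that requires genuine care — and that I expect to be the main obstacle — is the converse of the classification statement: one must be sure that after normalizing the $V_+$-components and the $V_-$-components of the chosen basis vectors, the $k-r-s$ ``diagonal'' vectors $u$ can be simultaneously normalized so that $u \mapsto$ (standard $V_+$ vector) $+$ (standard $V_-$ vector), i.e.\ that one is not forced to rescale in a way that conflicts between the two factors. This is handled by observing that within each $\GL$-factor one has full freedom to prescribe where an independent tuple goes, and the two normalizations are carried out independently; a coordinate-level argument, or equivalently the transitivity statement above applied to each factor separately, closes the gap.
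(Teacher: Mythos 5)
Your proof is correct and complete. The paper states this lemma without proof (it is a standard fact about orbits of a Levi subgroup on a Grassmannian), and your argument --- $H$-invariance of $(r_W,s_W)$ for the ``only if'' direction, the normal form built from bases of $W\cap V_+$ and $W\cap V_-$ plus diagonal vectors whose $V_+$- and $V_-$-projections are automatically independent (which lets you normalize the two factors independently for the converse), and the dimension count via the projection $W\to V_\pm$ with kernel $W\cap V_\mp$ for the inequalities --- is precisely the standard one and is consistent with the explicit representatives $W_{(r,s)}$ the paper writes down immediately after the lemma.
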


Denote by $\mathcal{I}^{k}_{p,q}$ the set of pairs of nonnegative integers $(r,s)$ that satisfying \eqref{formula::parameterization of double cosets}. Then, by Lemma \ref{lem::H-orbit in Gr(k)}, the double cosets in $H \backslash G / P$ can be parameterized by $\mathcal{I}^k_{p,q}$. For $(r,s) \in \mathcal{I}^{k}_{p,q}$, call $d  =  k - r -s$ the defect of $(r,s)$.

We first seek a complete set of representatives of $P \backslash G  / H$. We split the discussions into two cases.

Case $k \geqslant p$. Let $W_{(r,s)}$ be the subspace of $V$ generated by 
\begin{align*}
	\{ e_1,\cdots,e_r;e_{r+1}+e_{q+r+1},\cdots, e_{k-s}+e_{q+k-s};e_{q+k-s+1},\cdots e_n;e_{p+1},\cdots,e_k  \}.
\end{align*}
Then $\dim_F W_{r,s} = k$, $\dim_F (W_{(r,s)} \cap V_+)  =  r$ and $\dim_F (W_{(r,s)} \cap V_-)  =  s$. Let $\tilde{\eta}_{(r,s)}^{-1}$ be the block matrix
\begin{align*}
	\pmtrix{C_1}{C_2}{C_3}{C_4}
\end{align*}
where $C_1$ and $C_4$ are matrices of size $p \times p$ and $q \times q$ respectively, and 
\begin{align*}
	& C_1  =  \pmtrix{I_{k-s}}{}{}{0},\quad C_4 = \pmtrix{I_{k-s+q-p}}{}{}{0}   \\
	& C_2  =  \pmtrix{0}{0}{0}{I_{s+p-k}},\quad C_3  = \pmtrix{0}{0}{0}{I_{p-r}}.
\end{align*}
Then $\{\tilde{\eta}_{(r,s)}^{-1} \}$ is a complete set of representatives of the double coset space $H \backslash G / P$. Taking inverse, we thus get a complete set of representatives $\{ \tilde{\eta}_{(r,s)}\}$ of $P \backslash G / H$.

Case $k \leqslant p$. Let $W_{(r,s)}$ be the subspace of $V$ of dimension $k$ generated by 
\begin{align*}
	\{e_1,e_2,\cdots, e_r;e_{r+1}+e_{n-k+r+1},\cdots, e_{k-s}+e_{n-s}; e_{n-s+1},\cdots,e_n \}.
\end{align*}
Then $\dim_F W_{r,s} = k$, $\dim_F (W_{r,s} \cap V_+) = r$ and $\dim_F (W_{(r,s)} \cap V_-)  =  s$. Let $\tilde{\eta}_{(r,s)}^{-1}$ be the block matrix
\begin{align*}
	\pmtrix{D_1}{D_2}{D_3}{D_4}
\end{align*}
where $D_1$ and $D_4$ are matrices of size $k \times k$ and $(n-k) \times (n-k)$ respectively, and 
\begin{align*}
	& D_1  =  \pmtrix{I_{k-s}}{}{}{0},\quad D_4 = \pmtrix{I_{n-k-s}}{}{}{0}   \\
	& D_2  =  \pmtrix{0}{0}{0}{I_{s}},\quad D_3  = \pmtrix{0}{0}{0}{I_{k-r}}.
\end{align*}
Then $\{\tilde{\eta}_{(r,s)}^{-1} \}$ is a complete set of representatives of the double coset space $H \backslash G / P$. Taking inverse, we thus get a complete set of representatives $\{ \tilde{\eta}_{(r,s)}\}$ of $P \backslash G / H$.

We then describe the relevant data for these general $P$-orbits in $G \cdot e$. For $(r,s) \in \mathcal{I}^k_{p,q}$, let $\tilde{x}_{(r,s)} =  \tilde{\eta}_{(r,s)} \theta(\tilde{\eta}_{(r,s)})^{-1}  \in G \cdot e $. Thus $\{ \tilde{x}_{(r,s)} \}$ is a complete set of representatives of $P$-orbits in $G\cdot e$. Write $w_{(r,s)}  =  \iota_{M} (P \cdot \tilde{x}_{(r,s)})$. Recall that $w_{(r,s)}$ is left and right $W_{M}$-reduced. In either case, we have that
\begin{align}
	w_{(r,s)}  = \left( \begin{matrix}
		I_{k-d}  &  &  &   \\  &  &  I_d & \\  & I_d &  &  \\  & & & I_{n-k-d}
	\end{matrix}\right).
\end{align}
Thus $L = L_{(r,s)} =  M \cap w_{(r,s)} M w_{(r,s)}^{-1}$ is the standard Levi subgroup associated to the composition $(k-d,\shskip d,\shskip d,\shskip n-k-d)$ of $n$. Denote by $Q$ the standard parabolic subgroup of $G_n$ with Levi subgroup $L$. We can choose, in either case, an orbit representative $x_{(r,s)} \in P \cdot \tilde{x}_{(r,s)} \cap L w_{(r,s)}$ such that
\begin{align}\label{formula::orbit representative-x(r,s)}
	x_{(r,s)} \varepsilon  =  \left( \begin{matrix}
		I_{r,s} &  & &  \\  &  &  I_d  &  \\ & I_d &  &  \\  &  &  &  I_{p+s-k, q+r-k}
	\end{matrix} \right).
\end{align}
So the group $L_{x_{(r,s)}}$ consists of elements $\diag\{A_{1,+},A_{1,-},A_2,A_3,A_{4,+},A_{4,-} \}$ such that
\begin{align*}
	\begin{cases}
		A_{1,+} \in G_r, \ A_{1,-} \in G_s,\quad A_{4,+} \in G_{p+s-k},\ A_{4,-} \in G_{q+r-k}; \\
		A_2  =  A_3 \in G_d.
	\end{cases}
\end{align*}
We can also choose $\eta_{(r,s)} \in G_n$ such that $\eta_{(r,s)} \theta (\eta_{(r,s)})^{-1}  =  x_{(r,s)}$ and that 
\begin{align}\label{formula::eta-condition}
	\eta_{(r,s)}^{-1}  \left( \begin{smallmatrix}
		A_{1,+} & & & & &  \\ & A_{1,-}& & & & \\ & & A_2& & & \\ & & & A_3 & &  \\ & & & & A_{4,+}& & \\ & & & & &A_{4,-}  \end{smallmatrix}  \right) \eta_{(r,s)}  
	=        \left( \begin{smallmatrix}
		A_{1,+} & & & & &  \\ & A_2& & & & \\ & & A_{4,+}& & & \\ & & & A_{4,-} & &  \\ & & & & A_3& & \\ & & & & &A_{1,-}  \end{smallmatrix}  \right)    \in H_{p,q}                                  
\end{align}

The modular characters for general orbtis that are relavent to us are computed as follows.

\begin{lem}\label{lem::modular character--general orbtis}
	For $(r,s) \in \mathcal{I}^k_{p,q}$, let $x = x_{(r,s)}$, $\eta = \eta_{(r,s)}$, $L$ and $Q$ as given above. For $a \in \BR$, let $\mu_a$ be the character of $H = H_{p,q}$ defined in \eqref{formula::character of Hpq}.
	
	 For
	\begin{align*}
		m = \textup{diag}\{A_{1,+},A_{1,-},A_2,A_3,A_{4,+},A_{4,-} \}  \in L_x,
	\end{align*}
	then
	\begin{align}\label{formula::modulus character--x(r,s)}
		\delta_{Q_x}\delta_Q^{-1/2} (m) & =  \nu(A_{1,+})^{(p-q+s-r)/2} \nu(A_{1,-})^{(q-p+r-s)/2} \nu(A_{4,+})^{(s-r)/2} \nu(A_{4,-})^{(r-s)/2}, \\
		\mu_a^{\eta^{-1}} (m)             &=  \nu( A_{1,+} )^{a} \nu( A_{1,-})^{-a} \nu( A_{4,+})^{a} \nu( A_{4,-})^{-a}.  \nonumber
	\end{align}
\end{lem}
\begin{proof}
	Note that $x_{(r,s)}$ is the natural representative for an $L$-admissible $Q$-orbit in $G\cdot e$ chosen in Section \ref{section::Admissible orbits}. Then \eqref{formula::modulus character--x(r,s)} follows directly from Lemma \ref{lem::modulus character-Admissible orbit}.
\end{proof}


\section{Consequences of the Geometric Lemma}\label{section::consequences+Geometric Lemma}

\subsection{The geometric lemma}

We first recall the formulation of the geometric lemma of Bernstein and Zelevinsky in \cite[Theorem 4.2]{Offen-ParabolicInduction-JNT}, and we refer the reader to \emph{loc.cit} for unexplained notation.

\begin{prop}
	Let $P = M \ltimes U$ be a standard parabolic subgroup of $G$. Let $\sigma$ be a representation of $M$, and $\chiup$ a character of $H$. If the representation $\ind_P^G (\sigma)$ is $(H,\chiup)$-distinguished, then there exist a $P$-orbit $\mathcal{O}$ in $P \backslash (G \cdot e)$ and $\eta \in G$ satisfying $x = \eta \cdot e \in \mathcal{O} \cap Lw$ (where $w = \iota_M(P \cdot x)$ and $L = M(w)$) such that the Jacquet module $r_{L,M}(\sigma)$ is $(L_x, \delta_{Q_x}\delta_Q^{-1/2}\chiup^{\eta^{-1}})$-distinguished. Here $Q  =  L \ltimes V$ is the standard parabolic subgroup of $G$ with Levi subgroup $L$.
\end{prop}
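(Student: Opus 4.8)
The assertion is the geometric lemma of Bernstein--Zelevinsky \cite[Theorem 5.2]{BZ-I} in the form adapted to the $\theta$-twisted symmetric pair $(G,H)$, and the plan is to follow the treatment of \cite[Theorem 4.2]{Offen-ParabolicInduction-JNT}; I only sketch the structure here. First I would analyze the restriction $\ind_P^G(\sigma)|_H$ by the Mackey-type filtration over $P\bs G/H$. By the bijection $g\mapsto g\cdot e$ of Section \ref{section::Parabolic orbits of symmetric space}, the double cosets in $P\bs G/H$ correspond to the $P$-orbits in $G\cdot e$, of which there are finitely many. Enumerating these orbits in an order refining the closure relation, one obtains a finite filtration of $\ind_P^G(\sigma)|_H$ by $H$-submodules whose successive quotient $\mathcal{F}_{\mathcal{O}}$ attached to an orbit $\mathcal{O}=P\cdot x$, with $x=\eta\cdot e$, is a normalized compact induction $\cind_{\eta^{-1}P_x\eta}^{H}$ of the restriction to $\eta^{-1}P_x\eta = H\cap\eta^{-1}P\eta$ of $\sigma$ (inflated from $M$ along $P$), twisted by the appropriate Jacobian and modular factors.

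Next I would exploit left-exactness of $\Hom_H(-,\chiup)$: since the filtration is finite, the hypothesis $\Hom_H(\ind_P^G(\sigma),\chiup)\neq 0$ forces $\Hom_H(\mathcal{F}_{\mathcal{O}},\chiup)\neq 0$ for at least one orbit $\mathcal{O}$, because in any short exact sequence $0\to A\to B\to C\to 0$ of smooth $H$-modules the non-vanishing of $\Hom_H(B,\chiup)$ implies that of $\Hom_H(A,\chiup)$ or of $\Hom_H(C,\chiup)$, and one descends the filtration from $\ind_P^G(\sigma)$ until one reaches a subquotient with nonzero Hom-space. Applying Frobenius reciprocity for compact induction and then conjugating by $\eta$, this transports the non-vanishing to a statement about $\Hom_{L_x}$ of the relevant data against $\chiup^{\eta^{-1}}$ times a modular character.

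The main obstacle is the bookkeeping that turns the restriction of $\sigma$ to the stabilizer into the Jacquet module $r_{L,M}(\sigma)$ and pins down the twisting character as $\delta_{Q_x}\delta_Q^{-1/2}\chiup^{\eta^{-1}}$. The key is the choice of representative: $w=\iota_M(P\cdot x)$ is the $W_M$-reduced element with $PxP=PwP$, and $x$ can be taken in $\mathcal{O}\cap Lw$ with $L=M(w)=M\cap wMw^{-1}$; with this choice the image in $M$ of $P\cap\eta^{-1}P\eta$ is a parabolic subgroup of $M$ with Levi $L$, so restricting $\sigma$ and averaging over its unipotent radical produces precisely $r_{L,M}(\sigma)$, while the accumulated Jacobians, combined with the normalizing factors $\delta_P^{\pm 1/2}$, collapse to $\delta_{Q_x}\delta_Q^{-1/2}$ for $Q=L\ltimes V$. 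Matching these modular characters is a routine but lengthy computation using the explicit orbit representatives and their stabilizers; this is exactly what is carried out in \cite[Theorem 4.2]{Offen-ParabolicInduction-JNT}, from which the proposition follows.
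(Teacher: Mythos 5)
The paper gives no proof of this proposition; it simply recalls it as \cite[Theorem 4.2]{Offen-ParabolicInduction-JNT} and refers the reader there. Your sketch accurately reproduces the structure of Offen's argument (Mackey filtration over $P\bs G/H$ via the orbit correspondence, the standard exactness argument descending the filtration, Frobenius reciprocity and conjugation by $\eta$, and the modular-character bookkeeping that produces $\delta_{Q_x}\delta_Q^{-1/2}$), so it is consistent with the source the paper relies on.
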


We retain the notation of Section \ref{section::Parabolic orbits of symmetric space}. As a consequence of the orbit analysis there, we formulate the following corollary.

\begin{cor}\label{cor::Distinction--maximal parabolic}
	Let $\sigma_1$ resp. $\sigma_2$ be a representation of $G_k$ resp. $G_{n-k}$. If the representation $\sigma_1 \times \sigma_2$ is $(H_{p,q},\mu_a)$-distinguished for some $p$, $q \geqslant 0$, $p + q =n$ and $a \in \BR$, then there exists a pair $(r,s) \in \mathcal{I}^k_{p,q}$ with defect $d = k - r -s$ such that the representation $r_{(k-d,d)} \sigma_1 \otimes r_{(d,n-k-d)}\sigma_2$ of $L$ is $(L_x, \delta_{Q_x}\delta_Q^{-1/2}\mu_a^{\eta^{-1}})$-distinguished, where $L$ is the standard Levi subgroup of $G_n$ associated to $(k-d,d,d,n-k-d)$, $Q$ is the standard parabolic subgroup with $L$ its Levi part, $x = x_{(r,s)}$ is given in \eqref{formula::orbit representative-x(r,s)} and $\eta = \eta_{(r,s)} \in G_n$ such that $x = \eta \cdot e$ and \eqref{formula::eta-condition} holds. 
\end{cor}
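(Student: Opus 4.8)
The plan is to apply the geometric lemma, in the form of the proposition above, to the representation $\sigma_1\times\sigma_2$, which by definition equals $\ind_P^{G_n}(\sigma_1\otimes\sigma_2)$ with $P = P_{k,n-k}$ the standard maximal parabolic subgroup of $G_n$, $M = G_k\times G_{n-k}$ its Levi subgroup, and inducing datum $\sigma = \sigma_1\otimes\sigma_2$. Applying the geometric lemma to the distinction hypothesis produces a $P$-orbit $\mathcal{O}$ in $P\backslash(G_n\cdot e)$ together with an element $\eta\in G_n$ for which $x := \eta\cdot e$ lies in $\mathcal{O}\cap Lw$, where $w = \iota_M(P\cdot x)$ and $L = M(w) = M\cap wMw^{-1}$, such that $r_{L,M}(\sigma_1\otimes\sigma_2)$ is $(L_x,\delta_{Q_x}\delta_Q^{-1/2}\mu_a^{\eta^{-1}})$-distinguished, $Q$ being the standard parabolic subgroup with Levi subgroup $L$.

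The second step is to insert the explicit description of these orbits from Section \ref{section::general orbits}. By Lemma \ref{lem::H-orbit in Gr(k)} and the discussion following it, the $P$-orbits in $G_n\cdot e$ are in bijection with the set $\mathcal{I}^k_{p,q}$, the orbit attached to $(r,s)$ being $P\cdot\tilde{x}_{(r,s)}$; hence $\mathcal{O} = P\cdot\tilde{x}_{(r,s)}$ for a unique pair $(r,s)\in\mathcal{I}^k_{p,q}$, and we put $d = k-r-s$. For this orbit it was shown there that $w = w_{(r,s)}$, that $L = L_{(r,s)}$ is the standard Levi subgroup attached to the composition $(k-d,d,d,n-k-d)$ of $n$, and that $\mathcal{O}\cap Lw$ contains the distinguished representative $x_{(r,s)}$ of \eqref{formula::orbit representative-x(r,s)}, together with an element $\eta_{(r,s)}$ satisfying $\eta_{(r,s)}\cdot e = x_{(r,s)}$ and \eqref{formula::eta-condition}. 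Since distinction of the Jacquet module in the conclusion of the geometric lemma is an invariant of the $P$-orbit $\mathcal{O}$, we may replace the auxiliary data $x$ and $\eta$ by the explicit pair $x_{(r,s)}$, $\eta_{(r,s)}$: this amounts to conjugating $L_x$, $Q_x$ and the character $\delta_{Q_x}\delta_Q^{-1/2}\mu_a^{\eta^{-1}}$ by a suitable element of $L$, which preserves distinction.

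It remains to compute the Jacquet module. Since $L = L_{(r,s)}$ sits inside $M = G_k\times G_{n-k}$ as the product of the standard Levi subgroup $G_{k-d}\times G_d$ of $G_k$ and the standard Levi subgroup $G_d\times G_{n-k-d}$ of $G_{n-k}$, compatibility of the normalized Jacquet functor with the formation of products yields $r_{L,M}(\sigma_1\otimes\sigma_2) = r_{(k-d,d)}\sigma_1\otimes r_{(d,n-k-d)}\sigma_2$, which is exactly the representation in the statement. Combining the three steps proves the corollary. The only point beyond routine bookkeeping is the reduction to the explicit representatives $x_{(r,s)}$, $\eta_{(r,s)}$ and the resulting identification of $\delta_{Q_x}\delta_Q^{-1/2}\mu_a^{\eta^{-1}}$ with the character computed in Section \ref{section::general orbits}; everything else is a direct transcription of the geometric lemma and the orbit analysis carried out there.
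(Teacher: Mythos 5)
Your proof is correct and proceeds exactly as the paper intends: the paper offers no written proof of this corollary, saying only that it is a consequence of the geometric lemma combined with the orbit analysis of Section \ref{section::Parabolic orbits of symmetric space}, and your three steps (apply the geometric lemma to $\ind_P^{G_n}(\sigma_1\otimes\sigma_2)$, substitute the explicit orbit data $(r,s)\mapsto x_{(r,s)},\eta_{(r,s)},w_{(r,s)},L_{(r,s)}$, and compute $r_{L,M}(\sigma_1\otimes\sigma_2)=r_{(k-d,d)}\sigma_1\otimes r_{(d,n-k-d)}\sigma_2$) are precisely that consequence, with the reduction to the chosen representatives correctly justified by $L$-conjugation invariance.
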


Often in practice there is a filtration of the Jacquet module of the inducing data whose successive factors are pure tensor representations. The following lemma is a direct consequence of Lemma \ref{lem::modular character--general orbtis}.

\begin{lem}\label{lem::distinction--pure tensor}
	Notation being as above. Let $\rho  =  \rho_1 \otimes \rho_2 \otimes \rho_3 \otimes \rho_4$ be a pure tensor representation of $L$. Then $\rho$ is $(L_{x},\delta_{Q_x}\delta_Q^{-1/2}\mu_a^{\eta^{-1}})$-distinguished if and only if 
	\begin{align}
		\begin{cases}\label{formula::necessary condition--GL}
			\rho_2  \cong \rho_3^{\vee},           \\
			\rho_1 \text{ is }(H_{r,s},\shskip \mu_{a+(p - q + s - r)/2}) \text{-distinguished},  \\
			\rho_4 \text{ is }(H_{p+s-k,q+r-k}, \mu_{a+(s-r)/2})  \text{ distinguished}.      
		\end{cases}
	\end{align}
\end{lem}
\vspace{1pt}
\begin{rem}
    Our proof of classification has an inductive structure. This necessary conditions \eqref{formula::necessary condition--GL} is the reason why we study $(H_{p,q},\mu_a)$-distinction from the beginning, although our main concern is about $H_{p,p}$-distinction.	
\end{rem}

\begin{rem}
	The subscripts in the pair $(H_{p,q},\mu_a)$ play a subtle role in this work as, for example, seen from Proposition \ref{prop::left aligned + distinguish = Speh}. We do not have a conceptual explanation for this now. The following observation might be helpful when applying this lemma. For a pair $(H_{p,q},\mu_a)$, set $S^+ (p,q,a) = p - q + 2a$ and $S^-(p,q,a) = p - q - 2a$. When passing from distingusihed $\sigma_1 \times \sigma_2$ to distingsuihed $\rho_1$ and $\rho_4$, the invariants $S^+$ and $S^-$ for the subgroup pairs are preserved respectively.
\end{rem}

To handle the duality relation in \eqref{formula::necessary condition--GL}, we have the following

\begin{lem}\label{lem::duality--multisegments}
	Let $\mathfrak{m}_1,\cdots,\mathfrak{m}_r$ and $\mathfrak{n}_1,\cdots,\mathfrak{n}_s$ be multisegments. If
	\begin{align*}
		 \langlands(\mathfrak{m}_1) \times \cdots \times \langlands(\mathfrak{m}_r) \cong \langlands(\mathfrak{n}_1) \times \cdots \times \langlands(\mathfrak{n}_s),
	\end{align*}
    then $\mathfrak{m}_1 + \cdots + \mathfrak{m}_r  = \mathfrak{n}_1 + \cdots + \mathfrak{n}_s$.
\end{lem}
\begin{proof}
	It is known that $\langlands(\mathfrak{m}_1 + \cdots + \mathfrak{m}_r)$ is a subquotient of $\langlands(\mathfrak{m}_1) \times \cdots \times \langlands(\mathfrak{m}_r)$. By our condition, it is then a subquotient of $\lambdaup(\mathfrak{n}_1 + \cdots + \mathfrak{n}_s)$. Reversing the roles of $\mathfrak{m}_i$'s and $\mathfrak{n}_j$'s, the required equality follows from \cite[Theorem 7.1]{Zelevinsky-II} (see also \cite[Theorem 5.3]{Tadic-Induced-GLn-DivisionAlgebra}).
\end{proof}

As seen from above, the geometric lemma provides us necessary conditions for distinction of induced representations.We now present a sufficient condition that is due to Matringe \cite[Proposition 3.8]{Matringe-BF+L-function}.
\begin{lem}\label{lem::preservation-parabolic induction}
	Let $n_1 = 2 m_1$ and $n_2 = 2 m_2$ be even integers, let $a \in \BR$. Assume that $\pi_1$ is $(H_{m_1,m_1},\mu_a )$-distinguished and $\pi_2$ is $(H_{m_2,m_2} , \mu_a)$-distinguished. Then $\pi_1 \times \pi_2$ is $(H_{m_1+m_2,m_1+m_2},\mu_a)$-distinguished.
\end{lem}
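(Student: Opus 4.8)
The plan is to deduce this from the sufficient-condition machinery already assembled, namely \cite[Proposition 7.1]{Offen-ParabolicInduction-JNT} together with Corollary \ref{cor::distinction--pure tensor}, exactly as indicated in the statement. Concretely, write $\pi = \pi_1 \times \pi_2$ as a representation induced from the standard parabolic $P = M \ltimes U$ of $G_{n_1+n_2}$ with Levi $M = G_{n_1} \times G_{n_2}$, inducing data $\sigma = \pi_1 \otimes \pi_2$, and take $\chiup = \mu_a$ on $H = H_{m_1+m_2,\, m_1+m_2}$. By the general sufficient condition for distinction of an induced representation (the "open orbit" or rather "admissible orbit" criterion of Offen), it is enough to exhibit \emph{one} $P$-orbit $\mathcal{O}$ in $P \backslash (G \cdot e)$ together with $\eta$ with $x = \eta \cdot e \in \mathcal{O} \cap Lw$, such that $\sigma = r_{M,M}(\sigma)$ — here $L = M(w) = M$ because we will choose an $M$-admissible orbit, so no Jacquet module is actually taken — is $(M_x, \delta_{P_x}\delta_P^{-1/2}\chiup^{\eta^{-1}})$-distinguished.

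First I would pin down the right orbit. Since $M = G_{n_1} \times G_{n_2}$ corresponds to the composition $\bar n = (n_1,n_2)$ and we want an $M$-admissible orbit, we use the description in Section \ref{section::Admissible orbits}: such orbits correspond to pairs $(\mathfrak{c}_\tau,\tau)$ in $\mathcal{I}_{p,q}^{\sharp}(\bar n)$ with $p = q = m_1+m_2$. I would take $\tau$ to be the identity permutation (both blocks are fixed), and split each block evenly: $n_{1,+} = n_{1,-} = m_1$ and $n_{2,+} = n_{2,-} = m_2$. The constraint equations are then satisfied, giving $\sum_k n_{k,+} = m_1 + m_2 = p$ and likewise $= q$. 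This is the natural "diagonal" admissible orbit. For this choice $L = M$, so $r_{L,M}(\sigma) = \sigma = \pi_1 \otimes \pi_2$ and no Jacquet module intervenes — this is why the evenness hypotheses $n_i = 2m_i$ are exactly what is needed.

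Next I would compute the twist. Plugging $t = 2$, $\tau = \mathrm{id}$, $n_{1,\pm} = m_1$, $n_{2,\pm} = m_2$ into Lemma \ref{lem::modulus character-Admissible orbit}: the exponents $(n_{j,+} - n_{j,-})/2$ etc.\ all vanish because each block is split evenly, and there is no $i<j$ with $\tau(i) > \tau(j)$, so $\delta_{P_x}\delta_P^{-1/2}(m) = 1$ on $M_x$. Similarly, by the same routine calculation that gives the second line of \eqref{formula::modulus character--x(r,s)}, one finds $\mu_a^{\eta^{-1}}$ restricted to $M_x = \{\mathrm{diag}(A_{1,+},A_{1,-},A_{2,+},A_{2,-})\}$ equals $\nu(A_{1,+})^{a}\nu(A_{1,-})^{-a}\nu(A_{2,+})^{a}\nu(A_{2,-})^{-a}$. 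So the condition "$\pi_1 \otimes \pi_2$ is $(M_x, \delta_{P_x}\delta_P^{-1/2}\mu_a^{\eta^{-1}})$-distinguished" factors, by the $t=2$ analogue of Corollary \ref{cor::distinction--pure tensor}, into: $\pi_1$ is $(G_{m_1}\times G_{m_1}, \mu_a)$-distinguished and $\pi_2$ is $(G_{m_2}\times G_{m_2}, \mu_a)$-distinguished — which are precisely our hypotheses. Invoking \cite[Proposition 7.1]{Offen-ParabolicInduction-JNT} then yields that $\pi_1 \times \pi_2$ is $(H_{m_1+m_2,m_1+m_2},\mu_a)$-distinguished, completing the proof.

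The only real point requiring care — the "main obstacle", though it is more bookkeeping than difficulty — is checking that the chosen orbit genuinely satisfies the hypotheses of Offen's Proposition 7.1 (in particular that it is $M$-admissible and that one may pick $\eta$ with $x = \eta\cdot e$ lying in $M w = M$, i.e.\ $w$ trivial), and tracking the characters through the identification $\mu_a^{\eta^{-1}}$ correctly so that the two exponent-$a$ pieces land on $\pi_1$ and $\pi_2$ rather than picking up spurious shifts; since the blocks are split symmetrically all modular and twist contributions are unshifted, so this goes through cleanly.
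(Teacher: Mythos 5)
Your proposal is correct, and it is essentially the approach the paper itself indicates: the paper gives no written proof of this lemma but cites Matringe's Proposition 3.8 and remarks that it "also follows directly from \cite[Proposition 7.1]{Offen-ParabolicInduction-JNT} and Corollary \ref{cor::distinction--pure tensor}." Your proposal is a faithful and correct expansion of that second route: picking the $M$-admissible orbit $\tau=\mathrm{id}$, $n_{i,\pm}=m_i$ (which is the $d=0$ orbit with $(r,s)=(m_1,m_1)$ in the notation of Section 4.4, a closed $P$-orbit, so Offen's sufficient criterion applies), verifying via Lemma \ref{lem::modulus character-Admissible orbit} that $\delta_{P_x}\delta_P^{-1/2}$ is trivial, computing $\mu_a^{\eta^{-1}}$, and factoring the resulting condition into the two hypotheses. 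Two minor remarks: (i) you need not invoke a separate "$t=2$ analogue" of Corollary \ref{cor::distinction--pure tensor} --- the corollary as stated already applies when $d=0$, with $\rho_2=\rho_3$ the trivial representation of $G_0$ (the duality condition $\rho_2\cong\rho_3^{\vee}$ being vacuous), $\rho_1=\pi_1$ and $\rho_4=\pi_2$; (ii) the parenthetical gloss that the evenness hypothesis is "exactly why no Jacquet module intervenes" conflates two roles: evenness is used so that the blocks can be split as $n_{i,+}=n_{i,-}=m_i$, making the $\delta_{P_x}\delta_P^{-1/2}$ exponents vanish and the twists symmetric; the absence of a Jacquet module comes from $d=0$ (i.e.\ $L=M$), which holds for any $d=0$ orbit regardless of evenness.
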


\subsection{Distinction of products of essentially square-integrable representations}

We now apply Corollary \ref{cor::Distinction--maximal parabolic} to products of essentially square-integrable representations.

\begin{prop}\label{prop::Geometric Lemma-Main}
	Let $\pi = \Delta_1 \times \cdots \times \Delta_t$ be a representation of $G_n$, where $\Delta_i = \Delta([a_i,b_i]_{\rho_i})$ is an essentially square-integrable representation of $G_{n_i}$, $i = 1, \cdots ,t$. (Here we assume all $a_i$, $b_i$ are integers.) Suppose that $\pi$ is $(H_{p,q},\mu_a)$-distinguished with $p$, $q$ two nonnegative integers, $p + q =n$, and $a \in \BR$. Then there exist an integer $c_t$ satisfying $a_t - 1 \leqslant c_t  \leqslant b_t$ such that one of the following cases must hold: 
	
	\textup{Case A1.}$\ $ One has $a_t = c_t < b_t$. The representation $\Delta([a_t,c_t]_{\rho_t}) = \mathbf{b}(\Delta_t) $ is either the character $\nu^{a+(q-p+1)/2}$ or the character $\nu^{-a + (p-q + 1)/2}$ of $G_1$; and there exists $i \in \{ 1, 2, \cdots, t-1\}$, and an integer $c_{i}$, $a_{i} \leqslant c_{i} \leqslant b_i$, such that
	
	\begin{itemize}
		\item[(i)] one has $\Delta([a_t+1,b_t]_{\rho_t})^{\vee}  \cong \Delta([a_i,c_i]_{\rho_i})$;
		\item[(ii)] the representation
		\begin{align*}
			\Delta_1 \times \cdots \times \Delta([c_i + 1,b_i]_{\rho_i}) \times \cdots \times \Delta_{t-1}
		\end{align*}
		is $(H_{p-n_t, 1 + q -n_t},\mu_{a + 1/2})$ or $(H_{ 1 +p-n_t, q -n_t},\mu_{a - 1/2})$-distinguished, depending on $\mathbf{b}(\Delta_t)$.
	\end{itemize}
	
	\textup{Case A2.}$\ $ One has $a_t \leqslant c_t < b_t$. The representation $\Delta([a_t,c_t]_{\rho_t})$, with its degree $n_t'$ an even integer, is $H_{n_t'/2,n_t'/2}$-distinguished; and there exists $i \in \{1, 2,\cdots, t-1\}$ and an integer $c_{i}$, $a_{i} \leqslant c_{i} \leqslant b_i$, such that
	
	\begin{itemize}
		\item[(i)] one has $\Delta([c_t+1,b_t]_{\rho_t})^{\vee}  \cong \Delta([a_i,c_i]_{\rho_i})$;
		\item[(ii)] the representation
		\begin{align*}
			\Delta_1 \times \cdots \times \Delta([c_i + 1,b_i]_{\rho_i}) \times \cdots \times \Delta_{t-1}
		\end{align*}
		is $(H_{p', q'},\mu_{a})$ -distinguished with $p' = p-n_t + n_t'/2$ and $q' = q - n_t + n_t'/2$.
	\end{itemize}
	
	\textup{Case B1.}$\ $ One has $c_t = b_t$. The representation $\Delta([a_t,c_t]_{\rho_t}) = \Delta_t$ is either the character $\nu^{a+(q-p+1)/2}$ or the character $\nu^{-a + (p-q + 1)/2}$ of $G_1$; and the representation
	\begin{align*}
		\Delta_1 \times \cdots \times \Delta_{t-1}
	\end{align*}
	is $(H_{p-1, q},\mu_{a + 1/2})$ or $(H_{ p, q - 1},\mu_{a - 1/2})$-distinguished, depending on $\Delta_t$.
	
	\textup{Case B2.}$\ $ One has $c_t = b_t$. The representation $\Delta_t$ is $H_{n_t/2,n_t/2}$-distinguished, where $n_t$ is even; and the representation
	\begin{align*}
		\Delta_1 \times \cdots \times \Delta_{t-1}
	\end{align*}
	is $(H_{p-n_t/2, q-n_t/2},\mu_a)$-distinguished.
	
	\textup{Case C.}$\ $ One has $c_t = a_t - 1$. There exists $i \in \{1,2,\cdots, t-1 \}$ and an integer $c_{i}$, $a_{i} \leqslant c_{i} \leqslant b_i$, such that
	
	\begin{itemize}
		\item[(i)] one has $\Delta_t^{\vee}  \cong \Delta([a_i,c_i]_{\rho_i})$;
		\item[(ii)] the representation
		\begin{align*}
			\Delta_1 \times \cdots \times \Delta([c_i + 1,b_i]_{\rho_i}) \times \cdots \times \Delta_{t-1}
		\end{align*}
		is $(H_{p-n_t, q -n_t},\mu_a)$-distinguished.
	\end{itemize}
\end{prop}

\begin{proof}
	Write $\sigma_1 = \Delta_1 \times \cdots \times \Delta_{t-1}$ and $\sigma_2 = \Delta_t$, and $k = n - n_t$. By Corollary \ref{cor::Distinction--maximal parabolic}, in its notation, there exists $(r,s) \in \mathcal{I}^k_{p,q}$ with defect $d = k - r - s$ such that the representation $r_{(k-d,d)} \sigma_1 \otimes r_{(d,n-k-d)}\sigma_2$ of $L$ is $(L_x, \delta_{Q_x}\delta_Q^{-1/2}\mu_a^{\eta})$-distinguished. By \cite[9.5]{Zelevinsky-II}, the Jacquet module $r_{(d,n-k-d)} \sigma_2$ of $\sigma_2$ is either zero or of the form $\Delta([c_t + 1, b_t]_{\rho_t}) \otimes \Delta([a_t,c_t]_{\rho_t})$ for certain integer $c_t$ with $ a_t - 1 \leqslant c_t \leqslant b_t$. By \cite[1.2, 1.6]{Zelevinsky-II}, there exists a filtration $0 \subset V_1 \subset \cdots \subset V= r_{(k-d,d)} \sigma_1$ such that each successive factor is equivalent to a representation of the form
	\begin{align*}
		\Delta([c_1 + 1,b_1]_{\rho_1}) \times \cdots \times \Delta([c_{t-1} + 1,b_{t-1}]_{\rho_{t-1}})  \otimes \Delta([a_1,c_1]_{\rho_1}) \times \cdots  \times \Delta([a_{t-1},c_{t-1}]_{\rho_{t-1}}),
	\end{align*}
	for certain integers $c_i$ such that $a_i - 1 \leqslant c_i \leqslant b_i$, $i = 1, \cdots,t-1$. Therefore, there exists integers $c_i$, $i =1 ,2,\cdots ,t$, such that the pure tensor representation
	\begin{align*}
		\prod_{i=1}^{t-1} \Delta([c_i +1, b_i]_{\rho_i}) \otimes \prod_{i=1}^{t-1} \Delta([a_i,c_i]_{\rho_i}) \otimes \Delta([c_t + 1, b_t]_{\rho_t}) \otimes \Delta([a_t,c_t]_{\rho_t})
	\end{align*}
	is $(L_x,\delta_{Q_x}\delta_Q^{-1/2}\mu_a^{\eta^{-1}})$-distinguished. By Lemma \ref{lem::distinction--pure tensor}, we have 
	\begin{align*}
		\Delta([c_t +1, b_t]_{\rho_t})^{\vee}  \cong \prod_{i=1}^{t-1} \Delta([a_i,c_i]_{\rho_i}).
	\end{align*}
	By Lemma \ref{lem::duality--multisegments}, $c_i = a_i - 1$ for all but one $i$ between $1$ and $t-1$. So, for this $i$, we have
	\begin{align}\label{formula::geometric lemma--Main--Mid}
		\Delta([c_t +1, b_t]_{\rho_t})^{\vee}  \cong \Delta([a_i , c_i]_{\rho_i}).
	\end{align}
	Lemma \ref{lem::distinction--pure tensor} also implies that 
	\begin{align}\label{formula::geometric lemma--Main--4}
		\text{  $\Delta([a_t,c_t]_{\rho_t})$ is $(H_{p+s-k,q+r-k}, \mu_{a + (s-r)/2})$-distinguished,}  
	\end{align}
	and that
	\begin{align}\label{formula::geometric lemma--Main--1}
		\Delta_1 \times \cdots \times \Delta([c_i + 1,b_i]_{\rho_i}) \times \cdots \times \Delta_{t-1} \nonumber\\
		\text{is $(H_{r,s}, \mu_{a + (p-q + s - r) /2})$-distinguished.}  
	\end{align}
	
	When $a_t \leqslant c_t < b_t$, we have two subcases. If $c_t = a_t$ and the degree of $\rho_t$ equals to $1$, it follows from \eqref{formula::geometric lemma--Main--4} that $(p+s-k, q+r-k) = (1,0)$ or $(0,1)$. By \eqref{formula::geometric lemma--Main--1}, \eqref{formula::geometric lemma--Main--Mid} and simple calculations, we then have Case A1; Otherwise, the representation $\Delta([a_t,c_t]_{\rho_t})$ is not one dimensional. Thus, in \eqref{formula::geometric lemma--Main--4} we have $p+s-k >0$ and $q+r-k >0$. By Proposition \ref{prop::starting point}, we get that $\Delta([a_t,c_t]_{\rho_t})$ is $H_{n_t'/2,n_t'/2}$-distinguished with $n_t'$ its degree. The rest statements of Case A2 follow from simple calculations. Thus we have Case A2.
	
	When $c_t = b_t$, we have two subcases. If $\Delta_t$ is a character of $G_1$, then by similar arguments as in Case A1, we have Case B1. Otherwise, by similar arguments as in Case A2, we have Case B2. In these two cases, we have $d = 0$ and $c_i = a_i - 1$ by our convention.
	
	When $c_t = a_t - 1$, by \eqref{formula::geometric lemma--Main--4}, we have $p+s-k = q + r -k = 0$. The statements of Case C follow from \eqref{formula::geometric lemma--Main--1}, \eqref{formula::geometric lemma--Main--Mid} and simple calculations. So we are done.
\end{proof}

\begin{cor}\label{cor::Geometric Lemma-Main}
	Let $\pi = \Delta_1 \times \cdots \times \Delta_t$ be as above. If $\pi$ is $(H_{p,q},\mu_a)$-distinguished with $p$, $q$ and $a$ as above, then either the representation $\Delta_t$ is the character $\nu^{a+(q-p+1)/2}$ or the character $\nu^{-a + (p-q + 1)/2}$ of $G_1$, or there is $i \in \{ 1, 2 , \cdots, t\}$ such that $\mathbf{e}(\Delta_t)^{\vee} \cong \mathbf{b}(\Delta_i)$. 
\end{cor}
\begin{proof}
	Note that in all cases other than Case B1, we have a duality relation.
\end{proof}

Considering the duality relation between extremities of segments, a generalization of Corollary \ref{cor::Geometric Lemma-Main} is given later in Proposition \ref{prop::Geometric Lemma--Products Products}.

%
%

\section{Distinction of Ladder Representations}\label{section::ladder}

\subsection{Notations and basic facts}

The class of ladder representations was first introduced by Lapid and M\'{i}nguez in \cite{Lapid-Minguez--Ladder}, and was further studied by Lapid and his collaborators in \cite{Lapid-Kret-JacquetModule+Ladder} and \cite{Lapid-Minguez-ParabolicInduction}. We start by reviewing some basic facts of these representations.

\subsubsection{Definitions}

Let $\rho \in \mathscr C$. By a \emph{ladder} we mean a set $\{\Delta_1,\cdots, \Delta_t  \}  \in \mathscr O_{\rho}$ such that
\begin{align}\label{formula::ladder-definition}
	\mathbf{b}(\Delta_1) > \cdots > \mathbf{b}(\Delta_t) \quad \text{and}\quad \mathbf{e}(\Delta_1) > \cdots > \mathbf{e}(\Delta_t).
\end{align}
A representation $\pi \in \textup{Irr}$ is called a \emph{ladder representation} if $\pi = \langlands(\mathfrak{m})$ where $\mathfrak{m} \in \mathscr O_{\rho}$ is a ladder. Whenever we say that $\mathfrak{m} = \{\Delta_1,\cdots ,\Delta_t \} \in \mathscr O_{\rho}$ is a ladder, we implicitly assume that $\mathfrak{m}$ is already ordered as in \eqref{formula::ladder-definition}. We denote by $\mathfrak{m}^{\vee} \in \mathscr O_{\rho^{\vee}}$ the ladder $\{\Delta_t^{\vee},\cdots ,\Delta_1^{\vee}\}$. 

\begin{lem}\label{lem::DualOfLadder}
	Let $\mathfrak{m} \in \mathscr O_{\rho}$ be a ladder. One has $\langlands(\mathfrak{m})^{\vee}  =  \langlands(\mathfrak{m}^{\vee})$.
\end{lem}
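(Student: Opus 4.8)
The plan is to prove the two assertions separately: the first by a direct manipulation of segments, the second by dualising the Langlands presentation of $\langlands(\mathfrak{m})$. For the first, recall that $\Delta([a,b]_{\rho})^{\vee} = \Delta([-b,-a]_{\rho^{\vee}})$, so that $b(\Delta^{\vee}) = e(\Delta)^{\vee}$ and $e(\Delta^{\vee}) = b(\Delta)^{\vee}$; since the contragredient reverses the order on a cuspidal line ($(\nu^{c}\rho)^{\vee} = \nu^{-c}\rho^{\vee}$), the defining inequalities $b(\Delta_1) > \cdots > b(\Delta_t)$ and $e(\Delta_1) > \cdots > e(\Delta_t)$ of the ladder $\mathfrak{m}$ become $b(\Delta_t^{\vee}) > \cdots > b(\Delta_1^{\vee})$ and $e(\Delta_t^{\vee}) > \cdots > e(\Delta_1^{\vee})$. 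Hence $\mathfrak{m}^{\vee} = \{\Delta_1^{\vee},\dots,\Delta_t^{\vee}\}$, listed in the order $(\Delta_t^{\vee},\dots,\Delta_1^{\vee})$, satisfies \eqref{formula::ladder-definition} and is a ladder in $\mathscr{O}_{\rho^{\vee}}$; note that this is precisely its standard (ladder) ordering.

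For the identity $\langlands(\mathfrak{m})^{\vee} = \langlands(\mathfrak{m}^{\vee})$, I would begin from the Langlands surjection $\lambda(\mathfrak{m}) = \Delta_1 \times \cdots \times \Delta_t \twoheadrightarrow \langlands(\mathfrak{m})$ given by the standard ordering. The contragredient functor is exact and contravariant and commutes with normalised parabolic induction, so it turns this into an embedding $\langlands(\mathfrak{m})^{\vee} \hookrightarrow (\Delta_1 \times \cdots \times \Delta_t)^{\vee} \cong \Delta_1^{\vee} \times \cdots \times \Delta_t^{\vee}$. By the first part, $(\Delta_t^{\vee},\dots,\Delta_1^{\vee})$ is the standard order of $\mathfrak{m}^{\vee}$, so $\Delta_1^{\vee} \times \cdots \times \Delta_t^{\vee}$ is $\lambda(\mathfrak{m}^{\vee})$ written in the opposite, anti-dominant, order, whose unique irreducible submodule is $\langlands(\mathfrak{m}^{\vee})$ by the dual form of the Langlands quotient theorem. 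Comparing the two statements forces $\langlands(\mathfrak{m})^{\vee} \cong \langlands(\mathfrak{m}^{\vee})$. One may also argue via the Gelfand--Kazhdan isomorphism $\langlands(\mathfrak{m})^{\vee} \cong \langlands(\mathfrak{m}) \circ \iota$ with $\iota(g) = {}^{t}g^{-1}$, together with the fact that $\iota$ swaps the standard parabolic with its opposite, thereby reducing to the unique-irreducible-quotient statement used to define $\langlands(\mathfrak{m}^{\vee})$.

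The step needing the most care, and really the only nontrivial point, is the bookkeeping of orders: the contragredient does \emph{not} reverse the order of the factors in a product $\Delta_1 \times \cdots \times \Delta_t$, whereas it \emph{does} reverse the ladder ordering of the underlying segments, so that $\Delta_1^{\vee} \times \cdots \times \Delta_t^{\vee}$ represents $\lambda(\mathfrak{m}^{\vee})$ in the reversed standard order and one must invoke the ``unique irreducible submodule'' rather than the ``unique irreducible quotient'' form of the Langlands classification. Beyond aligning these conventions the argument is formal, and nothing deeper is involved.
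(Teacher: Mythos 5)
The paper states this lemma without proof, treating it as a known consequence of the Langlands classification (it is implicit in Zelevinsky's theory and in Lapid--Minguez), so there is no proof of the paper's to compare against. Your argument, however, is correct and complete. The first half is the right segment bookkeeping: $\Delta([a,b]_{\rho})^{\vee}=\Delta([-b,-a]_{\rho^{\vee}})$ gives $b(\Delta^{\vee})=e(\Delta)^{\vee}$, $e(\Delta^{\vee})=b(\Delta)^{\vee}$, and since $\vee$ is order-reversing on a cuspidal line, the two chains of strict inequalities defining a ladder are flipped, so $(\Delta_t^{\vee},\dots,\Delta_1^{\vee})$ is the ladder ordering of $\mathfrak{m}^{\vee}$. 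For the second half you invoke exactly the two facts one needs: that for normalised parabolic induction the smooth contragredient satisfies $(\sigma_1\times\cdots\times\sigma_t)^{\vee}\cong\sigma_1^{\vee}\times\cdots\times\sigma_t^{\vee}$ with the factor order \emph{preserved} (this is the $K$-pairing isomorphism $\ind_P(\sigma)^{\vee}\cong\ind_P(\sigma^{\vee})$, with no passage to the opposite parabolic), and the subrepresentation form of the Langlands classification, so that the unique irreducible submodule of the anti-dominantly ordered induced module $\Delta_1^{\vee}\times\cdots\times\Delta_t^{\vee}$ is $\langlands(\mathfrak{m}^{\vee})$. Dualising the surjection $\lambda(\mathfrak{m})\twoheadrightarrow\langlands(\mathfrak{m})$ then pins down $\langlands(\mathfrak{m})^{\vee}\cong\langlands(\mathfrak{m}^{\vee})$. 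You also correctly flag the only genuinely delicate point, the interaction between the unchanged factor order and the reversed ladder order, which is precisely why one must pass to the submodule (rather than quotient) form of the Langlands theorem. The Gelfand--Kazhdan alternative you sketch works as well, though it requires tracking how $\iota$ sends $P$ to $\bar{P}$ and then conjugating back; your primary argument is cleaner.
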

\begin{proof}
	See \cite[Proposition 5.6]{Tadic-86-UnitaryDual-GLn}
\end{proof}
We introduce some more notation. For a ladder $\mathfrak{m} = \{ \Delta_1,\cdots, \Delta_t \} \in \mathscr O_{\rho}$ ordered as in \eqref{formula::ladder-definition}, set $\pi = \langlands(\frm)$. We shall denote $\mathbf{b}(\Delta_1)$ by $\mathbf{b}(\pi)$, called the beginning of the ladder representaion $\pi$; denote $\mathbf{e}(\Delta_t)$ by $\mathbf{e}(\pi)$, called the end of $\pi$. We shall denote the number $t$ of segments in $\mathfrak{m}$ by $\mathbf{ht}(\pi)$, called the height of $\pi$. 

We say that $\pi$ is a \emph{decreasing} (resp. \emph{increasing}) ladder representation if
\begin{align*}
	l(\Delta_1) \geqslant \cdots \geqslant l(\Delta_t)  \quad (\textup{resp. }l(\Delta_1) \leqslant \cdots \leqslant l(\Delta_t) ).
\end{align*}
We say that $\pi$ is a \emph{left aligned} (resp. \emph{right aligned}) representation if $\mathbf{b}(\Delta_i)  =  \mathbf{b}(\Delta_{i+1}) + 1$ (resp. $\mathbf{e}(\Delta_i)  =  \mathbf{e}(\Delta_{i+1}) + 1$ ), $i= 1,\cdots, t-1$. Note that left aligned repreesentations are decreasing ladder representations and right aligned representations are increasing ladder representations.  

A ladder representation is called an \emph{essentially Speh} representation if it is both left aligned and right aligned. Note that essentially Speh representations are just the usual Speh representations up to twist by a non-unitary character. Let $\Delta$ be an essentially square-integrable representation of $G_d$ and $k$ a positive integer. Then $\frm_1 = \{\nu^{(k-1)/2}\Delta, \nu^{(k-3)/2}\Delta, \cdots, \nu^{(1-k)/2}\Delta \}$ is a ladder, and the ladder representation $\langlands(\frm_1)$ is an essentially Speh representation, which we denote by $\speh(\Delta,k)$. All essentially Speh representations can be obtained in this manner.


Let $\pi = \langlands(\mathfrak{m})$ as above. Let us further write $\Delta_i  =  \Delta([a_i,b_i]_{\rho})$. (The $a_i$'s are integers by our convention.) By a \emph{division} of $\pi$ as two ladder representations $\pi'$ and $\pi''$, denoted by $\pi = \pi' \sqcup \pi''$, we mean that there exist integers $c_i$ with $a_i - 1 \leqslant c_i \leqslant b_i$, $i = 1, \cdots, t$, such that 
\begin{align*}
	c_1 > c_2 > \cdots > c_t
\end{align*}
and that
\begin{align*}
	\pi'  &=  \langlands(\Delta([a_1,c_1]_{\rho}), \cdots, \Delta([a_t,c_t]_{\rho})),  \\
	\pi'' &=  \langlands(\Delta([c_1 + 1, b_1]_{\rho}), \cdots, \Delta([c_t + 1, b_t]_{\rho})).
\end{align*}
Note that if $\pi$ is an essentially Speh representation and $\pi = \pi' \sqcup \pi''$ with neither $\pi'$ nor $\pi''$ the trivial representation of $G_0$, then we have $\mathbf{b}(\pi) = \mathbf{b}(\pi')$ and $\mathbf{e}(\pi)  =  \mathbf{e}(\pi'')$.
\subsubsection{Standard module}

One useful property of ladder representations is that the relation between them and their standard modules is explicit. Let $\frm = \{\Delta_1,\cdots, \Delta_t  \} \in \mathscr O_{\rho}$ be a ladder with $\Delta_i  =  \Delta([a_i, b_i])_{\rho}$. Set
\begin{align*}
	\mathcal{K}_i = \Delta_1 \times \cdots \times \Delta_{i-1} \times \Delta([a_{i+1}, b_i]_{\rho}) \times \Delta([a_i,b_{i+1}]_{\rho})  \times \Delta_{i+1} \times \cdots \times \Delta_t,
\end{align*}
for $i = 1, \cdots, t-1$. (By our convention, $\mathcal{K}_i  =  0$ if $a_i > b_{i+1} + 1$). By \cite[Theorem 1]{Lapid-Minguez--Ladder} we have
\begin{prop}\label{prop::ladder--kernel description}
	With the above notation let $\mathcal{K}$ be the kernel of the projection $\lambda(\frm) \ra \textup{L}(\frm)$. Then $\mathcal{K} = \sum_{i=1}^{t-1} \mathcal{K}_i$.
\end{prop}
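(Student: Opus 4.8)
The plan is to prove the two inclusions $\sum_i\mathcal{K}_i\subseteq\mathcal{K}$ and $\mathcal{K}\subseteq\sum_i\mathcal{K}_i$ separately (this is \cite[Theorem~1]{Lapid-Minguez--Ladder}). For the first, fix $i$; if $\Delta_i$ and $\Delta_{i+1}$ are not linked then $\mathcal{K}_i=0$ by convention, so assume they are linked. The ladder conditions $a_1>\cdots>a_t$ and $b_1>\cdots>b_t$ give $\Delta_i\cup\Delta_{i+1}=[a_{i+1},b_i]_\rho$ and $\Delta_i\cap\Delta_{i+1}=[a_i,b_{i+1}]_\rho$; hence Zelevinsky's short exact sequence for two linked segments, pushed through the exact functor $\Delta_1\times\cdots\times\Delta_{i-1}\times(-)\times\Delta_{i+2}\times\cdots\times\Delta_t$, exhibits $\mathcal{K}_i$ as the kernel of the natural surjection $\lambda(\frm)\twoheadrightarrow\Delta_1\times\cdots\times\langlands(\{\Delta_i,\Delta_{i+1}\})\times\cdots\times\Delta_t$. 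By commutativity of $\frR$ one has $[\mathcal{K}_i]=[\lambda(\frm^{(i)})]$, where $\frm^{(i)}$ is $\frm$ with the linked pair $\{\Delta_i,\Delta_{i+1}\}$ replaced by $\{\Delta_i\cup\Delta_{i+1},\,\Delta_i\cap\Delta_{i+1}\}$; this elementary move strictly lowers a multisegment for the Zelevinsky order, so every irreducible constituent of $\mathcal{K}_i$ is of the form $\langlands(\mathfrak{n})$ with $\mathfrak{n}<\frm$. In particular $\langlands(\frm)$ is not a constituent of $\mathcal{K}_i$, so the composite $\mathcal{K}_i\hookrightarrow\lambda(\frm)\twoheadrightarrow\langlands(\frm)$ vanishes and $\mathcal{K}_i\subseteq\mathcal{K}$.

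For the reverse inclusion I would induct on the height $t$, the cases $t\le 2$ being classical. Write $\frm^-=\{\Delta_1,\dots,\Delta_{t-1}\}$, which is again a ladder, so $\lambda(\frm)=\lambda(\frm^-)\times\Delta_t$. By the inductive hypothesis $\ker\bigl(\lambda(\frm^-)\to\langlands(\frm^-)\bigr)=\sum_{i=1}^{t-2}\mathcal{K}^-_i$, and since parabolic induction is exact and $\mathcal{K}^-_i\times\Delta_t=\mathcal{K}_i$ for $i\le t-2$, one obtains $\lambda(\frm)\big/\sum_{i=1}^{t-2}\mathcal{K}_i\cong\langlands(\frm^-)\times\Delta_t$. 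This nonzero quotient of $\lambda(\frm)$ therefore has $\langlands(\frm)$ as its unique irreducible quotient, and the image $\overline{\mathcal{K}}_{t-1}$ of $\mathcal{K}_{t-1}$ lies in $\mathcal{K}\big/\sum_{i=1}^{t-2}\mathcal{K}_i$, which is precisely the kernel of $\langlands(\frm^-)\times\Delta_t\to\langlands(\frm)$. So everything reduces to the assertion
\[
\bigl(\langlands(\frm^-)\times\Delta_t\bigr)\big/\overline{\mathcal{K}}_{t-1}\ \cong\ \langlands(\frm),
\]
i.e.\ the maximal proper submodule of the product $\langlands(\frm^-)\times\Delta_t$ of the ladder representation $\langlands(\frm^-)$ and the segment $\Delta_t$ is exactly $\overline{\mathcal{K}}_{t-1}$.

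This last step is the heart of the matter, and it is where the ladder hypothesis is used essentially. It is equivalent to the identity $[\lambda(\frm)]=[\langlands(\frm)]+[\sum_i\mathcal{K}_i]$ in $\frR$; since $\langlands(\frm)$ occurs with multiplicity one in $[\lambda(\frm)]$ and the inclusion $\sum_i\mathcal{K}_i\subseteq\mathcal{K}$ is already known, inclusion--exclusion reduces this to computing the classes of the multiple intersections $\bigcap_{i\in S}\mathcal{K}_i\subseteq\lambda(\frm)$, $S\subseteq\{1,\dots,t-1\}$. For a ladder these are, in the Grothendieck group, the standard modules of the multisegments obtained by performing all the union--intersection moves indexed by $S$; establishing this — equivalently, showing that the $\mathcal{K}_i$ sit in sufficiently general position inside $\lambda(\frm)$ that no constituent other than $\langlands(\frm)$ survives in $\lambda(\frm)/\sum_i\mathcal{K}_i$ — requires the explicit combinatorial description of the Jacquet modules of ladder representations (\cite{Lapid-Kret-JacquetModule+Ladder}), and this bookkeeping is the main technical obstacle; it is what one ultimately imports from \cite{Lapid-Minguez--Ladder}.
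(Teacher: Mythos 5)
The paper does not prove this proposition: it simply cites \cite[Theorem 1]{Lapid-Minguez--Ladder}, which is exactly the statement. Your sketch therefore supplies strictly more content than the paper's text, and it is sound as far as it goes. The inclusion $\sum_i\mathcal{K}_i\subseteq\mathcal{K}$ is proved correctly and elementarily: identifying $\mathcal{K}_i$ as the kernel of the map collapsing the linked pair $\{\Delta_i,\Delta_{i+1}\}$ to $\langlands(\{\Delta_i,\Delta_{i+1}\})$, computing $[\mathcal{K}_i]=[\lambda(\frm^{(i)})]$ in $\frR$, and invoking the Zelevinsky order to rule out $\langlands(\frm)$ as a constituent is exactly the right argument, and notably it does not use the ladder hypothesis. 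The inductive reduction of the reverse inclusion to the single claim that $\bigl(\langlands(\frm^-)\times\Delta_t\bigr)/\overline{\mathcal{K}}_{t-1}\cong\langlands(\frm)$ is also correct: the passage from $\sum_{i\le t-2}\mathcal{K}_i^-\times\Delta_t$ to $\sum_{i\le t-2}\mathcal{K}_i$ is justified by exactness of parabolic induction, and the identification of $\overline{\mathcal{K}}_{t-1}$ with a submodule of the kernel of $\langlands(\frm^-)\times\Delta_t\to\langlands(\frm)$ uses the first inclusion, already established.

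You then openly defer the crux to Lapid--Minguez, which is consistent with what the paper does. One caveat worth flagging: the specific route you gesture at for finishing — inclusion--exclusion over the multiple intersections $\bigcap_{i\in S}\mathcal{K}_i$, identified via \cite{Lapid-Kret-JacquetModule+Ladder} — is plausible bookkeeping, but it is not obviously how Lapid and Minguez actually argue (their paper works rather with a Jacobi--Trudi-type determinantal formula for $[\langlands(\frm)]$ for ladders, and the socle filtration of $\lambda(\frm)$). Since both you and the paper ultimately cite the same theorem, this is a stylistic remark rather than a gap; but if you intended the inclusion--exclusion computation as an independent proof rather than a pointer, it would need to be carried out, and in particular you would need to verify that each $\bigcap_{i\in S}\mathcal{K}_i$ really is, in $\frR$, the standard module of the multisegment obtained by performing all the moves in $S$ — which is precisely where the ladder condition must enter, and which is not a formal consequence of anything you have written.
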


\subsubsection{Jacquet modules}

The Jacquet modules of ladder representations were computed in \cite[Corollary 2.2]{Lapid-Kret-JacquetModule+Ladder}, where it is shown that the Jacquet module of a ladder representation is semisimple, multiplicity free, and that its irreducible constituents are themselves tensor products of ladder representations. For us, we need only the Jacquet modules with respect to maximal parabolic subgroups. We record the result in \cite{Lapid-Kret-JacquetModule+Ladder} here. Let $P = M \ltimes U$ be the standard parabolic subgroup of $G_n$ associated to $(k,n-k)$.

\begin{prop}\label{prop::ladder--Jacquet Module}
	Let $\frm = \{\Delta_1,\cdots, \Delta_t  \} \in \mathscr O_{\rho}$ be a ladder with $\Delta_i  =  [a_i, b_i]_{\rho}$, and $\pi = \langlands(\frm)$. Then
	\begin{align*}
		r_{M,G} (\pi) = \sideset{}{'}\sum_{\pi = \pi_1 \sqcup \pi_2} \pi_2 \otimes \pi_1,
	\end{align*}
	where the summation takes over all divisions of $\pi$ as two ladder representations $\pi_1$ and $\pi_2$ such that the degree of $\pi_1$ is $ n - k$ and that the degree of $\pi_2$ is $k$.
\end{prop}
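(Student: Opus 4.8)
The plan is to prove Proposition \ref{prop::ladder--Jacquet Module} by combining the general formula for the Jacquet module of a product of essentially square-integrable representations (recalled in Section \ref{section::notation+pre}) applied to the standard module $\lambda(\frm) = \Delta_1 \times \cdots \times \Delta_t$, with the already-available structural input from Propositions \ref{prop::ladder--kernel description} and, recursively, the multiplicity-freeness results of Lapid--Kret. First I would compute $r_{M,G}(\lambda(\frm))$ using the Zelevinsky formula: since each $r_{(d_i, n_i - d_i)}(\Delta_i)$ is either zero or of the form $\Delta([c_i+1,b_i]_\rho) \otimes \Delta([a_i,c_i]_\rho)$ for $a_i - 1 \leqslant c_i \leqslant b_i$ (by \cite[9.5]{Zelevinsky-II}), the matrices $B \in \textup{Mat}^{\alpha,\beta}$ that contribute are exactly those indexing a choice of cut-points $(c_1,\dots,c_t)$ subject only to the degree constraint $\sum l(\Delta([c_i+1,b_i]_\rho))\cdot(\deg\rho) = n-k$. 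This expresses $r_{M,G}(\lambda(\frm))$ in the Grothendieck group as a sum over \emph{all} tuples $(c_i)$ (not necessarily decreasing) of $\big(\prod \Delta([c_i+1,b_i]_\rho)\big) \otimes \big(\prod \Delta([a_i,c_i]_\rho)\big)$.

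Next I would pass from $\lambda(\frm)$ to $\pi = \langlands(\frm)$ using $[\lambda(\frm)] = [\pi] + [\mathcal K]$ and Proposition \ref{prop::ladder--kernel description}, which identifies $\mathcal K = \sum \mathcal K_i$ where each $\mathcal K_i$ is itself (a product involving) ladder representations obtained by the elementary ``swap'' operation on consecutive segments. The key combinatorial point is that each $\mathcal K_i$ is again an induced representation of ladder type, so exactness of the Jacquet functor and induction on the total length (or on $\sum l(\Delta_i)$) let me compute $[r_{M,G}(\mathcal K)]$ by the same recipe; subtracting, the tuples $(c_i)$ that survive in $[r_{M,G}(\pi)]$ should be precisely the \emph{strictly decreasing} ones $c_1 > c_2 > \cdots > c_t$ — i.e. exactly the divisions $\pi = \pi_1 \sqcup \pi_2$ — with each surviving summand being $[\langlands(\text{bottom})] \otimes [\langlands(\text{top})]$ rather than the product of segments. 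Here one uses the standard fact that for a ladder, $\prod \Delta([a_i,c_i]_\rho)$ has irreducible Langlands quotient exactly when the cut-points are decreasing, and more generally that the irreducible ladder quotients appearing must be of this form; the multiplicity-one statement from \cite[Corollary 2.2]{Lapid-Kret-JacquetModule+Ladder} guarantees no cancellation ambiguity.

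The main obstacle I anticipate is the bookkeeping of the cancellation: showing that after subtracting $[r_{M,G}(\mathcal K)]$ from $[r_{M,G}(\lambda(\frm))]$, every non-decreasing tuple's contribution is killed and every decreasing tuple's contribution collapses to the single irreducible $\pi_2 \otimes \pi_1$. One clean way to organize this is to argue by induction on $t$ (or on $\sum (b_i - a_i)$): the base case $t=1$ is \cite[9.5]{Zelevinsky-II}, and for the inductive step one uses that each $\mathcal K_i = \Delta_1 \times \cdots \times \langlands(\{\Delta([a_{i+1},b_i]_\rho), \Delta([a_i,b_{i+1}]_\rho)\} \cup \cdots) \times \cdots$ is an induction of a ladder of smaller complexity, so that its Jacquet module is controlled by the inductive hypothesis; then a direct comparison of cut-point tuples finishes the count. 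Alternatively, since the statement is already \cite[Corollary 2.2]{Lapid-Kret-JacquetModule+Ladder} specialized to a two-block parabolic, one may simply cite that result and merely verify that their parametrization of maximal parabolic Jacquet modules coincides with the description via divisions $\pi = \pi_1 \sqcup \pi_2$ — this reduces the proof to a short unwinding of definitions, which is the route I would actually take in the paper, presenting the inductive argument above only as the conceptual justification.
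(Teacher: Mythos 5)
The paper gives no proof of this proposition at all: it is explicitly presented as a record of \cite[Corollary 2.2]{Lapid-Kret-JacquetModule+Ladder} specialized to a maximal parabolic, and that citation is the entirety of the paper's justification. Your closing paragraph, in which you say you would ``simply cite that result and merely verify that their parametrization of maximal parabolic Jacquet modules coincides with the description via divisions $\pi = \pi_1 \sqcup \pi_2$,'' is therefore exactly the route the paper takes, and is the right call.

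The inductive sketch in your first two paragraphs is a sensible conceptual outline of why the statement should be true, but if you tried to carry it out as a self-contained proof you would hit two nontrivial issues that your description glosses over. First, $\mathcal K = \sum_{i=1}^{t-1}\mathcal K_i$ is a sum of subrepresentations, not a direct sum, so $[\mathcal K]$ in the Grothendieck group is not simply $\sum_i [\mathcal K_i]$; computing $[\mathcal K]$ (and hence $[r_{M,G}(\mathcal K)]$) requires an inclusion--exclusion or M\"obius-type argument over the lattice generated by the $\mathcal K_i$'s. Second, after subtraction one must show not only that the surviving cut-point tuples are the strictly decreasing ones, but also that each surviving contribution collapses to the single irreducible tensor $\langlands(\text{bottom})\otimes\langlands(\text{top})$ rather than the full semisimplification of the corresponding product of segments; this collapse is precisely what requires the multiplicity-one and semisimplicity input from Lapid--Kret, so the sketch in effect presupposes what it aims to prove. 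None of this is fatal to your proposal, since you defer to citation as the actual route, but it is worth being clear that the inductive argument as written is a heuristic rather than a proof.
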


\subsubsection{Bernstein-Zelevinsky derivatives} The full derivative of a ladder representation was computed in \cite[Theorem 14]{Lapid-Minguez--Ladder}, where it is shown that the semisimplification of all of the derivatives of a ladder representation consists of ladder representations of smaller groups. In particular, the derivatives of a left aligned representation take simple forms, which we recall here.

\begin{lem}\label{lem::ladder-derivative-left aligned}
	Let $\rho \in \mathscr C (G_d)$, and $\frm = \{\Delta_1,\cdots, \Delta_t  \} \in \mathscr O_{\rho}$ be a ladder with $\Delta_i  =  \Delta([a_i, b_i]_{\rho})$. Suppose that $\pi = \langlands(\frm)$ is a left aligned representation. If $k$ is not divided by $d$, then $\pi^{(k)}  =  0$. If $k = rd$, then 
	\begin{align*}
		\pi^{(k)}  =  \langlands(\Delta([a_1 + r,b_1]_{\rho}),\Delta_2, \cdots ,\Delta_t).
	\end{align*}
\end{lem}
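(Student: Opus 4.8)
The plan is to compute $\pi^{(k)}$ through the Jacquet module $r_{(n-k,k)}(\pi)$, combining the explicit description of the latter for ladder representations (Proposition~\ref{prop::ladder--Jacquet Module}, in the form for the parabolic associated to $(n-k,k)$) with the standard dictionary between Bernstein--Zelevinsky derivatives and Jacquet modules from \cite{BZ-I} (see also \cite{Zelevinsky-II}): if the semisimplification of $r_{(n-k,k)}(\pi)$ is $\sum_j[\tau_j\otimes\sigma_j]$ with $\tau_j\in\textup{Irr}(G_{n-k})$ and $\sigma_j\in\textup{Irr}(G_k)$, then $[\pi^{(k)}]=\sum_{j:\,\sigma_j\text{ generic}}[\tau_j]$; in particular $\pi^{(k)}=0$ as soon as $r_{(n-k,k)}(\pi)=0$. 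This is the very mechanism underlying \cite[Theorem~14]{Lapid-Minguez--Ladder}, so the lemma is just that theorem specialized to the left aligned case, and I would carry out the specialization by hand.

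The case $d\nmid k$ is immediate: by Proposition~\ref{prop::ladder--Jacquet Module} every constituent of $r_{(n-k,k)}(\pi)$ comes from a division $\pi=\pi_1\sqcup\pi_2$ with $\deg\pi_1=k$, where $\pi_1=\langlands(\Delta([a_1,c_1]_\rho),\dots,\Delta([a_t,c_t]_\rho))$; but the degree of either part of a division, being a sum of the degrees $(c_i-a_i+1)d$ of its constituent segments, is a multiple of $d$. Hence if $d\nmid k$ there is no such division, $r_{(n-k,k)}(\pi)=0$, and $\pi^{(k)}=0$.

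For $k=rd$, write $\Delta_i=\Delta([a_i,b_i]_\rho)$, so $a_1>\cdots>a_t$ with $a_i=a_{i+1}+1$ (left alignment) and $b_1>\cdots>b_t$. A division of degree $(rd,\,n-rd)$ amounts to a choice of integers $c_1>\cdots>c_t$ with $a_i-1\le c_i\le b_i$ and $\sum_i(c_i-a_i+1)=r$. The crux is that $\pi_1$ is generic for exactly one such division: genericity forces the segments $[a_i,c_i]_\rho$ to be pairwise unlinked, but if $[a_j,c_j]_\rho$ were nonempty for some $j\ge 2$ then $c_{j-1}>c_j\ge a_j=a_{j-1}-1$ would force $[a_{j-1},c_{j-1}]_\rho$ nonempty as well, and this adjacent pair of nonempty segments is linked since $a_{j-1}=a_j+1\le c_j+1$ --- a contradiction. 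Thus $c_j=a_j-1$ for all $j\ge 2$, the degree condition gives $c_1=a_1+r-1$, and such a division exists precisely when $a_1+r-1\le b_1$, i.e.\ $r\le l(\Delta_1)$; in that case $\pi_1=\Delta([a_1,a_1+r-1]_\rho)$ (essentially square-integrable, hence generic) and the surviving factor is $\pi_2=\langlands(\Delta([a_1+r,b_1]_\rho),\Delta_2,\dots,\Delta_t)$. Since the sum in Proposition~\ref{prop::ladder--Jacquet Module} is multiplicity free, $[\pi^{(rd)}]$ is then this single irreducible class, so $\pi^{(rd)}$ is irreducible and isomorphic to $\langlands(\Delta([a_1+r,b_1]_\rho),\Delta_2,\dots,\Delta_t)$; and when $r>l(\Delta_1)$ no valid division exists, so $\pi^{(rd)}=0$, matching the stated formula under the convention $\Delta([a,b]_\rho)=0$ for $b<a-1$.

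The step requiring the most care is the derivative--Jacquet-module dictionary itself: one must work with the Jacquet module whose $G_k$-block sits at the bottom right, keep straight that it is the genericity of the $G_k$-factor that is imposed while the $G_{n-k}$-factor passes to $\pi^{(k)}$, and check that with the normalizations of \cite{BZ-I} no extraneous $\nu$-twist intervenes (a quick check on the Steinberg representation, where $\steinberg_n^{(k)}=\nu^{k/2}\steinberg_{n-k}$, confirms this). Once that bookkeeping is pinned down, everything else is elementary segment combinatorics.
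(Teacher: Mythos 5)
Your proposal is correct. There is no proof in the paper to compare against: the paper simply invokes \cite[Theorem 14]{Lapid-Minguez--Ladder} and records the left-aligned specialization as Lemma \ref{lem::ladder-derivative-left aligned} without argument, so you are filling in a derivation rather than reproducing one.

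Your argument is sound and self-contained modulo the derivative--Jacquet-module dictionary, which deserves one comment. You state it as ``$[\pi^{(k)}]=\sum_{j:\,\sigma_j\text{ generic}}[\tau_j]$'' where $r_{(n-k,k)}(\pi)=\sum_j[\tau_j\otimes\sigma_j]$. This is not written in exactly this form in \cite{BZ-I} or \cite{Zelevinsky-II}; it follows because the composite $\Psi^-(\Phi^-)^{k-1}(\,\cdot\,|_{P_n})$ is the exact twisted coinvariants functor for $N_{(n-k,1,\dots,1)}$ with the standard degenerate character, which factors as $r_{(n-k,k)}$ followed by the top derivative (Whittaker) functor on the $G_k$-block, and the latter kills a constituent unless its $G_k$-factor is generic, in which case it contributes once. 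The two numerical checks you run (Steinberg $\steinberg_n^{(k)}=\nu^{k/2}\steinberg_{n-k}$, and, implicitly, the orientation that it is the \emph{beginning} block $[a_i,c_i]_\rho$ that lands in the $G_k$-slot of $r_{(n-k,k)}$) pin down both the orientation and the absence of an extra twist, and they are computed correctly; a further check on $\mathbb{1}_{G_n}^{(1)}=\nu^{-1/2}\mathbb{1}_{G_{n-1}}$ would also confirm it. If you wanted to avoid relying on this folklore fact, you could instead cite the statement of \cite[Theorem 14]{Lapid-Minguez--Ladder} directly, as the paper does; your by-hand approach trades that citation for the dictionary, which is a reasonable exchange but should be flagged as the one nontrivial input.

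The segment combinatorics is exactly right: for a left-aligned ladder the unlinkedness condition forces $c_j=a_j-1$ for $j\ge 2$, the degree constraint pins $c_1=a_1+r-1$, so there is at most one generic-$G_k$-factor division, existing iff $r\le l(\Delta_1)$; combined with the multiplicity-freeness from \cite{Lapid-Kret-JacquetModule+Ladder} (quoted before Proposition \ref{prop::ladder--Jacquet Module}), this yields irreducibility of $\pi^{(rd)}$ and the stated formula, with the convention $\Delta([a,b]_\rho)=0$ for $b<a-1$ handling the case $r>l(\Delta_1)$. The case $d\nmid k$ is disposed of correctly by degree considerations.
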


\subsection{Distinction of products of essentially Speh representations}

In this subsection we apply Corollary \ref{cor::Distinction--maximal parabolic} to products of essentially Speh representations. 

Instead of Lemma \ref{lem::duality--multisegments}, we will use the following lemma to handle the duality relation in consequences of the geometric lemma.

\begin{lem}\label{lem::ladder-identification left aligned}
	Let $\sigma$ and $\pi_i$ be left aligned representations of $G_n$ and $G_{n_i}$, $i = 1, \cdots, k$. If $\sigma \cong \pi_1 \times \cdots \times \pi_k$, then   $k = 1$.
\end{lem}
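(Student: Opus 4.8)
\textbf{Proof proposal for Lemma \ref{lem::ladder-identification left aligned}.}

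The plan is to argue by contradiction, assuming $k \geqslant 2$, and to extract a numerical contradiction by comparing highest shifted derivatives on both sides. First I would observe that a left aligned representation $\sigma = \langlands(\frm)$ with $\frm = \{\Delta_1, \cdots, \Delta_t\} \in \mathscr O_\rho$, $\rho \in \mathscr C(G_d)$, has a well-defined notion of \emph{highest derivative order}: by Lemma \ref{lem::ladder-derivative-left aligned}, $\sigma^{(k)} = 0$ unless $d \mid k$, and for $k = rd$ the derivative $\sigma^{(rd)}$ is nonzero exactly as long as $\Delta([a_1 + r, b_1]_\rho) \neq 0$, i.e. for $r \leqslant l(\Delta_1)$. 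Thus the largest $k$ with $\sigma^{(k)} \neq 0$ is $k_{\max}(\sigma) = d \cdot l(\Delta_1)$, and moreover $\sigma^{(k_{\max})} = \langlands(\Delta_2, \cdots, \Delta_t)$ is again a left aligned representation (with one fewer segment, or trivial).

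Next I would use the Leibniz-type behaviour of derivatives under parabolic induction (the Bernstein-Zelevinsky filtration on $(\pi_1 \times \cdots \times \pi_k)|_{P}$, as in \cite[3.5, 4.5]{BZ-I}) to see that for a product $\pi_1 \times \cdots \times \pi_k$ the highest nonvanishing derivative is $(\pi_1 \times \cdots \times \pi_k)^{(K)}$ with $K = \sum_{j} k_{\max}(\pi_j)$, and that this top derivative equals $\pi_1^{(k_{\max}(\pi_1))} \times \cdots \times \pi_k^{(k_{\max}(\pi_k))}$ (all lower-order contributions from the filtration have strictly smaller total derivative order, so they do not survive in the top piece). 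Here each $\pi_j = \langlands(\frm_j)$ is left aligned on some cuspidal line $\BZ\shskip\rho_j$ with $\rho_j \in \mathscr C(G_{d_j})$; note $k_{\max}(\pi_j) = d_j\, l(\Delta_1^{(j)}) \geqslant d_j \geqslant 1$, where $\Delta_1^{(j)}$ is the top segment of $\frm_j$. Applying this with $\sigma \cong \pi_1 \times \cdots \times \pi_k$ gives
\begin{align*}
  \sigma^{(k_{\max}(\sigma))} \cong \pi_1^{(k_{\max}(\pi_1))} \times \cdots \times \pi_k^{(k_{\max}(\pi_k))},
\end{align*}
with $k_{\max}(\sigma) = \sum_j k_{\max}(\pi_j)$, and the left side is again a left aligned representation $\langlands(\Delta_2, \cdots, \Delta_t)$ of smaller degree. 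This sets up an induction on the degree (or on $t$): if $k \geqslant 2$, then at least one factor on the right, say $\pi_k$, has $k_{\max}(\pi_k) \geqslant 1$ and thus its top derivative is a left aligned representation of degree $\deg \pi_k - k_{\max}(\pi_k) \geqslant 0$ strictly less than $\deg \pi_k$; after accounting for factors $\pi_j$ whose top derivative is the trivial representation of $G_0$ (which happens precisely when $\frm_j$ is a single segment), we still have a genuine product of $\geqslant 2$ \emph{or} $\geqslant 1$ left aligned representations expressing a left aligned representation of strictly smaller degree, and we conclude by the inductive hypothesis — unless the induction bottoms out, which forces $k=1$.

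The main obstacle, and the step requiring care, is the bookkeeping in the induction: after taking top derivatives, some factors $\pi_j$ (the single-segment ones) collapse to the trivial representation of $G_0$ and disappear from the product, so the number of factors can drop, and one must rule out the degenerate scenario where \emph{every} $\pi_j$ is a single segment — but in that case $\sigma \cong \Delta^{(1)}_1 \times \cdots \times \Delta^{(k)}_1$ would be an \emph{irreducible} product of essentially square-integrable representations, forcing (by the theory of segments, or Lemma \ref{lem::Whittaker type-uniqueness} together with irreducibility criteria) that $\sigma$ itself is essentially square-integrable, i.e. $t=1$, and then matching this single segment against the product again pins down $k=1$. I expect the cleanest organization is a double induction: outer on $\deg\sigma$, and for the base/degenerate case a direct appeal to the classification of when a product of essentially square-integrable representations is irreducible and essentially square-integrable. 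The rest is the routine verification that the top derivative of a parabolic induction is the parabolic induction of top derivatives, which follows from the geometric filtration in \cite{BZ-I} by a degree count.
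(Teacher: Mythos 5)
Your plan via highest derivatives is the right family of ideas, but as written the induction does not close. Let me pin down the gap.

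After passing to top derivatives you obtain
\begin{align*}
  \sigma^{(K)} \ \cong\ \pi_1^{(K_1)} \times \cdots \times \pi_k^{(K_k)},
\end{align*}
and the factors $\pi_j^{(K_j)}$ with $\height(\pi_j)=1$ collapse to the trivial representation of $G_0$ and disappear from the product. Write $k'$ for the number of surviving (nontrivial) factors. Your inductive hypothesis, applied to $\sigma^{(K)}$, tells you $k'\leqslant 1$; and you separately dispose of the scenario $k'=0$ (all $\pi_j$ single segments). But the inductive step is silent about $k'=1$: there you have exactly one $\pi_{j_0}$ of height $\geqslant 2$ surviving, together with $k-1\geqslant 1$ single-segment factors $\pi_j$ that vanish after differentiation. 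The inductive hypothesis is \emph{satisfied} by $k'=1$, so it yields no contradiction, and nothing in the argument ``forces $k=1$'' as claimed. This is a genuine gap, not a bookkeeping nuisance: the top derivative is blind to single-segment factors, so iterating it cannot by itself detect them.

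There is a shorter route that avoids the case split entirely, and I believe it is the intended reading of ``as a consequence of Lemma~\ref{lem::ladder-derivative-left aligned}.'' Since $\textup{supp}(\sigma)=\sum_j \textup{supp}(\pi_j)$ and $\sigma$ is a ladder on a single cuspidal line $\BZ\shskip\rho$ with $\rho\in\mathscr C(G_d)$, each $\pi_j$ lies on that same line, with the same~$d$. By Lemma~\ref{lem::ladder-derivative-left aligned}, $\pi_j^{(m)}=0$ for $0<m<d$ and $\pi_j^{(d)}\neq 0$ for every $j$ (in the worst case it is the trivial representation of $G_0$), and likewise $\sigma^{(d)}$ is a \emph{single} irreducible representation, namely $\langlands(\Delta([a_1+1,b_1]_\rho),\Delta_2,\dots,\Delta_t)$. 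On the other hand the Bernstein--Zelevinsky Leibniz rule gives, in the Grothendieck group,
\begin{align*}
  \bigl[\sigma^{(d)}\bigr] \ =\ \sum_{l=1}^{k}\ \bigl[\pi_1\times\cdots\times\pi_l^{(d)}\times\cdots\times\pi_k\bigr],
\end{align*}
because the only solutions of $j_1+\cdots+j_k=d$ with each $j_l\in\{0,d,2d,\dots\}$ have exactly one $j_l=d$ and the rest zero. Each of the $k$ summands is a nonzero representation of $G_{n-d}$, so it contributes at least one irreducible constituent, hence $[\sigma^{(d)}]$ has length at least $k$. Irreducibility of $\sigma^{(d)}$ then forces $k=1$. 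The crucial fact your argument does not exploit is that, for a left aligned representation, the \emph{first} nonzero derivative $\sigma^{(d)}$ is already irreducible; this is what makes the Leibniz count decisive without any induction or top-derivative bookkeeping.
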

\begin{proof}
	By Lemma \ref{lem::ladder-derivative-left aligned}, the derivatives of left aligned representations are either $0$ or irreducible representations. Our assertion then follows form the description of the derivatives of a product of representations in \cite[Corollary 4.6]{BZ-I}  
\end{proof}

In view of Lemma \ref{lem::ladder-identification left aligned} and the description of Jacquet modules of a ladder representation in Proposition \ref{prop::ladder--Jacquet Module}, we formulate the following proposition, whose proof is very similar to that of Proposition \ref{prop::Geometric Lemma-Main} and is omitted here.

\begin{prop}\label{prop::ladder-products of left aligned}
	Let $\pi = \pi_1 \times \cdots \times \pi_t $ be a representation of $G_n$, where $\pi_i$ is an essentiallly Speh representation of $G_{n_i}$, $i = 1 , \cdots, t$. Assume that $\pi$ is $(H_{p,q},\mu_a)$-distinguished with $p$, $q$ two nonnegative integers, $p + q =n$ and $a \in \BR$. Then there exist a division of $\pi_t$ as two ladder representations $\pi_t'$ and $\pi_t''$, $\pi_t  =  \pi_t'  \sqcup \pi_t''$, with degrees $n_t'$ and $n_t''$ respectively, such that one of the following cases must hold: 
	
	\textup{Case A}.$\ $ The representation $\pi_t'$ is neither $\pi_t$ nor the trivial representation of $G_0$. There exists $i_0$, $1 \leqslant i_0 \leqslant t-1$, and a division of $\pi_{i_0}$ as two ladder representations $\pi_{i_0}'$ and $\pi_{i_0}''$, $\pi_{i_0}  =  \pi_{i_0}'  \sqcup \pi_{i_0}''$, such that
	\begin{itemize}
		\item[(i)]  $\pi_t'$ is $(H_{r,s},\mu_{a+(r-s+q-p)/2})$-distinguished, for two nonnegative integers $r, s \geqslant 0$, $ r + s = n_t'$;
		\item[(ii)]  $\pi_t''^{\vee} \cong \pi_{i_0}'$;
		\item[(iii)]  the representation 
		\begin{align}\label{formula::remaining repns-I}
			\pi_1 \times \cdots \times \pi_{i_0 -1} \times \pi_{i_0}'' \times \pi_{i_0 +1} \times \cdots \times \pi_{t-1}
		\end{align}
		is $(H_{r',s'},\mu_{a+(s'-r'+ p-q)/2})$-distinguished, for two nonnegative integers $r'$, $s' \geqslant 0$, $r' + s' = n - n_t - n_t''$.
	\end{itemize}
	
	\textup{Case B}.$\ $ One has $\pi_t' = \pi_t$ is $(H_{r,s},\mu_{a+(r-s+q-p)/2})$-distinguished, for two nonnegative integers $r$, $s \geqslant 0$, $ r + s = n_t$, and the representation 
	\begin{align}\label{formula::remaining repns--II}
		\pi_1 \times \cdots \times \pi_{t-1}
	\end{align}
	is $(H_{r',s'},\mu_{a+(s'-r'+ p-q)/2})$-distinguished, for two nonnegative integers $r',s' \geqslant 0$, $r' + s' = n - n_t$.
	
	\textup{Case C}.$\ $The representation $\pi_t'$ is the trivial representation of $G_0$, so $\pi_t'' = \pi_t$. There exists $i_0$, $1 \leqslant i_0 \leqslant t-1$, and a division of $\pi_{i_0}$ as two ladder representations $\pi_{i_0}'$ and $\pi_{i_0}''$, $\pi_{i_0}  =  \pi_{i_0}'  \sqcup \pi_{i_0}''$, such that
	\begin{itemize}
		\item[(i)]  $\pi_t^{\vee} \cong \pi_{i_0}'$;
		\item[(ii)]  the representation 
		\begin{align}\label{formula::remaining repns--III}
			\pi_1 \times \cdots \times \pi_{i_0 -1} \times \pi_{i_0}'' \times \pi_{i_0 +1} \times \cdots \times \pi_{t-1}
		\end{align}
		is $(H_{r,s},\mu_{a+(s-r+ p-q)/2})$-distinguished, for two nonnegative integers $r,s \geqslant 0$, $r + s = n - 2 n_t $.
	\end{itemize}
\end{prop}

\begin{rem}
	It is easy to see that Lemma \ref{lem::ladder-identification left aligned} fails if one removes the condition that $\sigma$ is left aligned. This is the reason why we restrict ourselves to products of essentially Speh representations here. Proposition \ref{prop::ladder-products of left aligned} makes an inductive proof of the classification result possible as, in many cases, the representations  \eqref{formula::remaining repns-I}, \eqref{formula::remaining repns--II} and \eqref{formula::remaining repns--III} are still products of essentially Speh representations.
\end{rem}

Nevertheless, we have the following proposition for products of ladder representations that is very useful in later arguments.

\begin{prop}\label{prop::Geometric Lemma--Products Products}
	Let $\Pi = \pi_1 \times \cdots \times \pi_t$ and $\Pi' = \pi'_1 \times \cdots \times \pi'_s$ be two products of ladder representations. If $\Pi \times \Pi'$ is $(H_{p,q},\mu_a)$-distinguished for two nonnegative integers $p$, $q$, $p + q =n $ and $a \in \BR$, then there are two possibilities here:
	
	(1).$\ $ $\Pi$ is $(H_{p_1,q_1},\mu_{a_1})$-distinguished and  $\Pi'$ is $(H_{p_2,q_2},\mu_{a_2})$-distinguished for some $p_i$, $q_i$ and $a_i$, $i = 1, 2$. Here the subscripts $(p_i,q_i,a_i)$, $i = 1,2$, satisfy
	\begin{align*}
		\begin{cases*}
			p_1 + p_2 = p \\
			q_1 + q_2 = q
		\end{cases*}
		and \,
		\begin{cases*}
			p_1 - q_1 + 2 a_1 = p - q + 2 a \\
			p_2 - q_2 - 2 a_2 = p - q - 2a.
		\end{cases*}
	\end{align*}
	
	(2).$\ $There exist $i \in \{1,\cdots,t\}$ and $j \in \{ 1, \cdots ,s\}$ such that $\mathbf{e}(\pi'_j)^{\vee} \cong \mathbf{b}(\pi_i)$.
\end{prop}
\begin{proof}
	This follows from similar arguments of Proposition \ref{prop::Geometric Lemma-Main} and the following simple implication of  Lemma \ref{lem::duality--multisegments} when applied to ladder representations. 
\end{proof}

\begin{lem}
		Let $\mathfrak{m}_1,\cdots,\mathfrak{m}_r$ and $\mathfrak{n}_1,\cdots,\mathfrak{n}_s$ be ladders. If
	\begin{align*}
		\langlands(\mathfrak{m}_1) \times \cdots \times \langlands(\mathfrak{m}_r) \cong \langlands(\mathfrak{n}_1) \times \cdots \times \langlands(\mathfrak{n}_s),
	\end{align*}
       then there exist  $i \in \{1,\cdots,r\}$ and $j \in \{ 1, \cdots ,s\}$ such that $\mathbf{b}(\langlands(\mathfrak{m}_i)) \cong \mathbf{b}(\langlands(\mathfrak{n}_j))$.
\end{lem}
\begin{proof}
	By Lemma \ref{lem::duality--multisegments}, one has $\mathfrak{m}_1+ \cdots + \mathfrak{m}_r = \mathfrak{n}_1 + \cdots + \mathfrak{n}_s$. Write $\mathfrak{m}_i = \{\Delta_{i,1},\cdots,\Delta_{i,k_i} \}$ and $\mathfrak{n}_j = \{ \Delta'_{j,1},\cdots, \Delta'_{j,l_j}\}$ for these $i$'s and $j$'s. Let $\Delta$ be one segement in $\sum_i \mathfrak{m}_i$ such that $\mathbf{b}(\Delta)$ is maximal, which means that, if for some $\Delta_0 \in \sum_i \mathfrak{m}_i$ with $\mathbf{b}(\Delta_0)$ lying in the same cuspidal line with $\mathbf{b}(\Delta)$, then $\mathbf{b}(\Delta_0) \leqslant \mathbf{b}(\Delta)$. As these $\mathfrak{m}_i$'s are ladders, one has $\Delta \in \{\Delta_{1,1},\cdots,\Delta_{r,1} \}$. Also, one has $\Delta \in \{\Delta'_{1,1}, \cdots, \Delta'_{s,1} \}$. So the lemma follows.
\end{proof}

It will turns out that the ordering of representations in a product is important for the geometric lemma approach to distinction problems. The commutativity of a product of two ladder representations was studied by Lapid and M\'{i}nguez in \cite{Lapid-Minguez-ParabolicInduction}. Here we present a special case of their results that is sufficient for our purpose.

\begin{lem}\label{lem::ladder--Commutativity}
	Let $\rho \in \mathscr C$. Let $\frm_1, \frm_2 \in \mathscr O_{\rho}$ be two ladders, with $\frm_1  =  \{\Delta_{1,1},\cdots, \Delta_{1,\shskip t_1} \}$ and $\frm_2  =  \{\Delta_{2,1},\cdots, \Delta_{2,\shskip t_2} \}$. Suppose that $\langlands(\frm_1)$ is an essentially Speh representation and $\langlands(\frm_2)$ is a right aligned representation. If $\mathbf{e}(\Delta_{1,\shskip t_1})  =  \mathbf{e}(\Delta_{2,\shskip t_2})$ and $t_2 \leqslant t_1$, or $\mathbf{e}(\Delta_{1,\shskip t_1})  =  \mathbf{e}(\Delta_{2,\shskip t_2})$ and $\mathbf{b}(\Delta_{1,\shskip t_1})  \leqslant  \mathbf{b}(\Delta_{2,\shskip t_2})$, then $\langlands(\frm_1) \times \langlands(\frm_2)$ is irreducible and $\langlands(\frm_1) \times \langlands(\frm_2)   =   \langlands(\frm_2) \times \langlands(\frm_1)$.
\end{lem}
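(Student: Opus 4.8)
The statement asserts two things: irreducibility of $\langlands(\frm_1) \times \langlands(\frm_2)$, and the equality $\langlands(\frm_1) \times \langlands(\frm_2) = \langlands(\frm_2) \times \langlands(\frm_1)$ under either of the two hypotheses. The plan is to reduce both claims to the commutativity/irreducibility criterion of Lapid and Minguez in \cite{Lapid-Minguez-ParabolicInduction}. Their main result characterizes when a product of two ladder representations $\langlands(\frm_1) \times \langlands(\frm_2)$ is irreducible in terms of a combinatorial condition on the two ladders (roughly, that no segment of $\frm_1$ is "linked and in the wrong relative position" with a segment of $\frm_2$, measured through the Zelevinsky/Langlands data). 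Once irreducibility is established, the equality of the two products is automatic: both $\langlands(\frm_1) \times \langlands(\frm_2)$ and $\langlands(\frm_2) \times \langlands(\frm_1)$ have the same image in $\frR$, and an irreducible object of $\frR$ lifts uniquely, so the two induced representations are isomorphic.

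First I would recall the explicit description of $\frm_1$ and $\frm_2$: write $\frm_1 = \{\Delta_{1,1}, \cdots, \Delta_{1,t_1}\}$ ordered as in \eqref{formula::ladder-definition}, with $\langlands(\frm_1)$ essentially Speh, so $\frm_1$ is both left and right aligned; in particular $b(\Delta_{1,i}) = b(\Delta_{1,i+1}) + 1$ and $e(\Delta_{1,i}) = e(\Delta_{1,i+1}) + 1$. Similarly $\frm_2 = \{\Delta_{2,1}, \cdots, \Delta_{2,t_2}\}$ with $\langlands(\frm_2)$ right aligned, so $e(\Delta_{2,j}) = e(\Delta_{2,j+1}) + 1$. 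The common hypothesis $e(\Delta_{1,t_1}) = e(\Delta_{2,t_2})$ pins down the bottom endpoints of the two ladders on the same cuspidal line, and then right-alignment forces $e(\Delta_{1,i}) = e(\Delta_{2,i})$ for all $i$ in the overlapping range. The remaining hypothesis — either $t_2 \leqslant t_1$, or $b(\Delta_{1,t_1}) \leqslant b(\Delta_{2,t_2})$ — then controls the beginnings: in the first case, essentially-Speh alignment of $\frm_1$ combined with $t_2 \le t_1$ controls how far the segments of $\frm_2$ can "stick out" at the top; in the second case it is a direct comparison of the shortest segments. The point is to verify, segment-by-segment, that $\Delta_{1,i}$ and $\Delta_{2,j}$ are never linked in the way that would obstruct irreducibility in the Lapid–Minguez criterion, i.e. one never has $\Delta_{1,i} \prec \Delta_{2,j}$ with $\Delta_{2,j} \not\prec \Delta_{1,i}$ in a configuration their theorem forbids (and symmetrically). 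Since $\frm_1$ is essentially Speh, its segments are "nested" in a very rigid way, and the two hypotheses are exactly what is needed so that the segments of $\frm_2$ interleave compatibly.

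The main obstacle will be the bookkeeping in matching our hypotheses to the precise form of the irreducibility criterion in \cite{Lapid-Minguez-ParabolicInduction}: their condition is stated in terms of the combinatorics of the underlying multisegments (and sometimes more conveniently in terms of the $\zelevinsky$-dual or via the "$X$-condition"), so one must carefully translate the alignment hypotheses on $\frm_1, \frm_2$ into the exact inequalities their theorem demands, handling separately the two listed cases and the boundary sub-cases (e.g. when $t_2 = t_1$, or when $b(\Delta_{1,t_1}) = b(\Delta_{2,t_2})$, where both hypotheses overlap). This is essentially a finite check once the dictionary is set up. After irreducibility is in hand, I would conclude with the uniqueness-of-lift argument: $[\langlands(\frm_1) \times \langlands(\frm_2)] = [\langlands(\frm_2) \times \langlands(\frm_1)]$ in $\frR$ by commutativity of $\frR$, and since this class is irreducible (hence a single irreducible object, not a proper sum), the two induced representations are isomorphic. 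This completes the proof.
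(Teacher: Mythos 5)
Your approach matches the paper's: both reduce to the irreducibility and commutativity criteria of Lapid--Minguez for products of ladder representations, with the same observation that their results are stated in the Zelevinsky classification and so require translating the hypotheses through the Zelevinsky involution (the paper does this via the Moeglin--Waldspurger combinatorial description, citing \cite{Moeglin-Waldspurger-ZelevinskyInvolution}, and then invokes Proposition~6.20 and Lemma~6.21 of \cite{Lapid-Minguez-ParabolicInduction}). Your fallback argument deducing $\langlands(\frm_1)\times\langlands(\frm_2)\cong\langlands(\frm_2)\times\langlands(\frm_1)$ from irreducibility and commutativity in $\frR$ is valid and a nice self-contained way to get the second claim, though the cited Lapid--Minguez results already supply it directly.
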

\begin{proof}
	Note that the results in \cite{Lapid-Minguez-ParabolicInduction} are expressed in terms of Zelevinsky classification. By the combinatorial description of Zelevinsky involution by Moeglin-Waldspurger \cite{Moeglin-Waldspurger-ZelevinskyInvolution} (see also \cite[\S 3.2]{Lapid-Minguez--Ladder}), we can rewrite the conditions in the lemma in terms of the Zelevinsky involution $\frm_1^t$ and $\frm_2^t$ of $\frm_1$ and $\frm_2$. The assertion then follows from Proposition 6.20 and Lemma 6.21 in \cite{Lapid-Minguez-ParabolicInduction}.
\end{proof}


\subsection{Distinction of essentially Speh representations}\label{section::Speh--MaxLevi}

From now on, we shall perform some detailed analysis using our consequences of the geometric lemma. 

\begin{prop}\label{prop::ladder-Speh--selfdual}
	Let $\pi$ be an essentially Speh representation of $G_n$. If $\pi$ is $(H_{p,q},\mu_a)$-distinguished for two positive integers $p$, $q$ with $p + q = n$ and some $a \in \BR$, then $\pi$ is self-dual.
\end{prop}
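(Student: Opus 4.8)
The plan is to deduce self-duality of an essentially Speh representation $\pi = \speh(\Delta,k)$ from the distinction hypothesis by combining the combinatorial consequences of the geometric lemma (Proposition \ref{prop::ladder-products of left aligned}, or more basically Corollary \ref{cor::Geometric Lemma-Main}) with the theory of Bernstein-Zelevinsky derivatives. Write $\Delta = \Delta([a,b]_\rho)$ with $\rho$ supercuspidal of degree $e$, so that $\pi$ has beginning $\boldb(\pi) = \nu^{(k-1)/2}b(\Delta)$ and end $\bolde(\pi) = \nu^{-(k-1)/2}e(\Delta)$. The goal is to show $\boldb(\pi)^\vee \cong \bolde(\pi)$ and $\Delta^\vee$ aligns with $\Delta$, i.e. $\rho^\vee \cong \nu^{a+b}\rho$ on the relevant cuspidal line, which forces $\pi^\vee \cong \pi$ since $\pi^\vee = \speh(\Delta^\vee,k)$ by Lemma \ref{lem::DualOfLadder}.

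First I would apply Corollary \ref{cor::Geometric Lemma-Main} to the standard module $\lambda(\frm) = \Delta_1 \times \cdots \times \Delta_k$ of $\pi$ (which is distinguished whenever $\pi$ is, since distinction passes to any representation having $\pi$ as a quotient — here one uses that $\lambda(\frm) \twoheadrightarrow \pi$, so $\Hom_{H}(\lambda(\frm),\mu_a) \supseteq \Hom_H(\pi,\mu_a)$), with the segments ordered so that $\Delta_k$ is the bottom segment $\nu^{-(k-1)/2}\Delta$. Since $\pi$ is not one-dimensional, $\Delta_k$ is not a character of $G_1$ unless $e = 1$ and $k = n$; I will need to handle the $e=1$ case separately or note that even then $\Delta_k$ has length $\geq 1$ and the alternative in Corollary \ref{cor::Geometric Lemma-Main} applies once we observe $b - a + 1 = n/k \geq 2$ when $\pi$ is not one-dimensional. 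In the generic branch of the corollary we get $e(\Delta_k)^\vee \cong b(\Delta_i)$ for some $i$; since in a Speh (left- and right-aligned) multisegment the beginnings $b(\Delta_i)$ and ends $e(\Delta_i)$ each run over arithmetic progressions, this single duality relation, together with the symmetry from Lemma \ref{lem::symmetry}(1) which lets us also place the top segment at the bottom, pins down $\rho^\vee$ relative to $\rho$ and the endpoints, yielding $\pi^\vee \cong \pi$.

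To make the endpoint bookkeeping clean, rather than Corollary \ref{cor::Geometric Lemma-Main} I would prefer to run Proposition \ref{prop::ladder-products of left aligned} with $t=1$, $\pi_1 = \pi$: the three cases give a division $\pi = \pi'_1 \sqcup \pi''_1$, and with $t=1$ the indices $i_0$ in Cases A and C cannot exist, so only Case B survives, i.e. $\pi$ itself is $(H_{r,s},\mu_{a+(r-s+q-p)/2})$-distinguished for some $r+s=n$. This is not yet enough; the real leverage comes from iterating with derivatives: by Lemma \ref{lem::ladder-derivative-left aligned} the highest shifted derivative of $\speh(\Delta,k)$ is $\speh(\Delta',k-1)$ (up to a shift in the top segment), and Proposition \ref{prop::Derivative--MainFormula} propagates a $(H_{p,q},\mu_a)$-distinction of $\pi$ restricted to the mirabolic down to a twisted distinction of this derivative. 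Running induction on $k$ — base case $k=1$ being Proposition \ref{prop::starting point} applied to $\Delta$ itself (essentially square-integrable, hence self-dual when distinguished) — I obtain that $\Delta$ is self-dual, and then $\pi^\vee = \speh(\Delta^\vee,k) = \speh(\Delta,k) = \pi$.

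The main obstacle I anticipate is the interface between the mirabolic/derivative argument and the twisting character: Proposition \ref{prop::Derivative--MainFormula} and Lemma \ref{lem::ladder-derivative-left aligned} only directly control the \emph{highest} nonzero shifted derivative and only after restriction to $P_n$, so I must argue that a linear form on $\pi$ that is $H_{p,q}$-equivariant, when restricted to $P_n \cap H_{p,q}$ and pushed through the Bernstein-Zelevinsky filtration of $\pi|_{P_n}$, survives to the top layer $\Psi^+(\Phi^-)^{k-1}$ rather than being killed or absorbed by a lower $\Phi^+$-layer; this requires knowing that the lower layers, which by Lemma \ref{lem::ladder-derivative-left aligned}'s vanishing statement (and the filtration of \cite{BZ-I}) involve only derivatives that vanish for degree reasons, contribute nothing — a point that should follow from the explicit shifted-derivative formula for left-aligned ladders but needs to be stated carefully. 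A secondary nuisance is tracking how the parameters $(p,q,a)$ transform under each derivative step so that the inductive hypothesis applies with matching data; I expect this to be a routine but bookkeeping-heavy calculation using \eqref{formula::Derivative-MainFormula} and Lemma \ref{lem::symmetry}.
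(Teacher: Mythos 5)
Your proposal conflates two different arguments, and the one you say you \emph{prefer} is circular. The derivative/mirabolic approach you sketch in your third paragraph is essentially what the paper does in Proposition \ref{prop::Speh--Nece+MaxiLevi} --- but the very first step of that proof is ``By Proposition \ref{prop::ladder-Speh--selfdual} we know that $\pi$ is self-dual, hence $\Delta$ is also self-dual,'' and this self-duality of $\Delta$ is used repeatedly to kill the lower layers of the Bernstein--Zelevinsky filtration (e.g.\ to rule out the subcase where $\nu^{1/2}\Delta_1$ has degree $>1$ via a positive-real-part central character contradiction, and to rule out $e(\nu^{(l-1)/2}\Delta)=\mathbf 1$). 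The auxiliary Proposition \ref{prop::Speh--Nece+MaxiLevi--CharacterInGeneralPosition}, which the derivative argument also needs, likewise quotes Proposition \ref{prop::ladder-Speh--selfdual}. You cannot use that machinery to prove the present statement; the dependency runs the other way. Your claim that the intermediate BZ layers ``vanish for degree reasons'' is also wrong: by Lemma \ref{lem::ladder-derivative-left aligned} the derivatives $\pi^{(re)}$ ($e=\deg\rho$, $1\leqslant r\leqslant l(\Delta)$) are all nonzero, and showing they contribute nothing to the $\Hom$-space is the whole content of \eqref{formula::Speh distinction-zero contribution}, which is the hard part and requires self-duality as input.

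Your first paragraph, by contrast, is on the right track but is not carried out, and the key step is exactly the one you wave away. Applying Corollary \ref{cor::Geometric Lemma-Main} to $\lambda(\frm)=\Delta_1\times\cdots\times\Delta_k$ gives $b(\Delta_i)\cong e(\Delta_k)^\vee$ for \emph{some} $i$; self-duality of $\pi$ is equivalent to $i=1$, and that is what still has to be proved. You gesture at ``the symmetry from Lemma \ref{lem::symmetry}(1) which lets us also place the top segment at the bottom,'' but Lemma \ref{lem::symmetry}(1) only exchanges $(H_{p,q},\mu_a)$ and $(H_{q,p},\mu_{-a})$ and does not touch the ordering of the segments; what you actually want is Lemma \ref{lem::symmetry}(2) to pass to $\pi^\vee=\langlands(\frm^\vee)$ and then apply Corollary \ref{cor::Geometric Lemma-Main} a second time to $\lambda(\frm^\vee)$, whose last factor is $\Delta_1^\vee$. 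That does yield a second relation $b(\Delta_1)\cong e(\Delta_j)^\vee$, and comparing the two (the $b(\Delta_i)$ and $e(\Delta_i)$ are strictly monotone along the ladder) forces $i=1$, $j=k$, hence $\Delta^\vee\cong\Delta$ and $\pi^\vee\cong\pi$. This is a legitimate variant. The paper instead shows $i=1$ by assuming $i>1$ and applying Proposition \ref{prop::Geometric Lemma--Products Products} to the split $(\Delta_1\times\cdots\times\Delta_{i-1})\times(\Delta_i\times\cdots\times\Delta_t)$, obtaining an incompatible duality relation; both routes are short and use only the geometric lemma, with no derivatives at all. If you repair your first paragraph along these lines and drop the derivative argument entirely, you have a correct proof; as written, the proposal contains a genuine circularity.
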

\begin{proof}
    Write $\pi = \langlands (\frm)$ with $\frm = \{\Delta_1,\cdots,\Delta_t  \}$ a ladder. The case where $\pi$ is one dimensional is obvious. So we assume that $\pi$, hence $\Delta_t$, is not one dimensional. By assumption, $\Delta_1 \times \cdots \times \Delta_t$ is $(H_{p,q},\mu_a)$-distinguished. By Corollary \ref{cor::Geometric Lemma-Main}, there exists $i$, $1 \leqslant i \leqslant t$, such that 
    \begin{align}\label{formula::ess-speh-1}
    	\mathbf{b}(\Delta_i) \cong \mathbf{e}(\Delta_t)^{\vee}.
    \end{align}
    We claim that $i = 1$. If so, by Lemma \ref{lem::DualOfLadder}, we see that $\pi$ is self-dual. In fact, if otherwise $i > 1$, we apply Proposition \ref{prop::Geometric Lemma--Products Products} to $\pi_1 \times \pi_2$, where $\pi_1 = \Delta_1 \times \cdots \times \Delta_{i-1}$ and $\pi_2 = \Delta_i \times \cdots \times \Delta_t$. We get either that 
    \begin{align}\label{formula::ess-speh-2}
    	 \mathbf{b}(\Delta_j) \cong \mathbf{e}(\Delta_k)^{\vee}
    \end{align}
    for some $j$, $ 1 \leqslant j \leqslant i-1$ and some $k$, $ i \leqslant k \leqslant t$, or that $\pi_1$ is $(H_{p_1,q_1},\mu_{a_1})$-distinguished with some $p_1,q_1$ and $a_1$, which implies, using Corollary \ref{cor::Geometric Lemma-Main} again, that
    \begin{align}\label{formula::ess-speh-3}
    	\mathbf{b}(\Delta_l) \cong \mathbf{e}(\Delta_{i-1})^{\vee}
    \end{align}
    for some $l, 1\leqslant l \leqslant i-1$. But we see easily that both \eqref{formula::ess-speh-2} and \eqref{formula::ess-speh-3} contradict with \eqref{formula::ess-speh-1}.
\end{proof}

\begin{cor}\label{cor::Speh--Nece+MaxiLevi--CharacterInGeneralPosition}
	Let $\pi$ be an essentially Speh representation of $G_n$. If the representation $\pi$ is $(H_{p,q},\mu_a)$-distinguished for two positive integers $p$, $q$ with $p + q = n$ and some $a \in \BR$, $a \neq 0$, then $p = q$.
\end{cor}
\begin{proof}
	This follows from Proposition \ref{prop::ladder-Speh--selfdual} and consideration of the central character of $\pi$.
\end{proof}

Now we are in a position to prove one direction of Theorem \ref{thm::1 Speh} (what we actualy prove is slightly more). The arguments involve an application of the theory of Bernstein-Zelevinsky derivatives.
\begin{prop}\label{prop::Speh--Nece+MaxiLevi}
	Let $\pi = \text{Sp}(\Delta,\shskip l)$ be an essentially Speh representation of $G_n$, where $\Delta$ is an essentially square-integrable representaion of $G_d$, $d > 1$, and $l$ is a positive integer. Assume that $\pi$ is $H_{p,q}$-distinguished or $(H_{p,q},\mu_{-1/2})$ for two positive integers $p$, $q$, $p + q = n$. Then the degree $d$ of $\Delta$ is even, and $\Delta$ is $H_{d/2,d/2}$-distinguished; also one has $p = q$. 
\end{prop}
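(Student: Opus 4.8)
The plan is to prove Proposition \ref{prop::Speh--Nece+MaxiLevi} by induction on the positive integer $l$, using the theory of Bernstein--Zelevinsky derivatives. Write $\Delta = \Delta([\alpha,\beta]_\rho)$ with $\rho \in \mathscr{C}(G_{d'})$ (so that $d = (\beta - \alpha + 1)d'$), and set $n = dl$. When $l = 1$ we have $\pi = \Delta$, so the claim that $d$ is even, $\Delta$ is $H_{d/2,d/2}$-distinguished, and $p = q$ is exactly Proposition \ref{prop::starting point} (together with the observation that the $(H_{p,q},\mu_{-1/2})$-case forces $a = 0$ via the central character argument of Proposition \ref{prop::Speh--Nece+MaxiLevi--CharacterInGeneralPosition}, or is handled directly). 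So assume $l \geqslant 2$ and that the statement holds for $l-1$.

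First I would reduce to the untwisted case: by Proposition \ref{prop::Speh--Nece+MaxiLevi--CharacterInGeneralPosition}, if $\pi$ is $(H_{p,q},\mu_a)$-distinguished with $a \neq 0$ then $p = q$; and if $\pi$ is $(H_{p,q},\mu_{-1/2})$-distinguished then, since $-1/2 \neq 0$, we already get $p=q$, so it remains only to deduce the claim about $\Delta$. Next, the heart of the argument is to relate linear periods on $\pi = \text{Sp}(\Delta,l)$ to linear periods on its highest shifted derivative. Restricting $\pi$ to the mirabolic $P_n$ and using the Bernstein--Zelevinsky filtration of $\pi|_{P_n}$ together with Proposition \ref{prop::Derivative--MainFormula} (applied to the $\Phi^+$-pieces), non-vanishing of $\Hom_{H_{p,q}}(\pi,\mu_a)$ will force one of the graded pieces to contribute; the $\Psi^+$-top piece corresponds to the highest nonzero derivative. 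By Lemma \ref{lem::ladder-derivative-left aligned} (note $\text{Sp}(\Delta,l)$ is left aligned), the highest nonzero derivative of $\pi$ is $\pi^{(d)} \cong \text{Sp}(\Delta, l-1)$ up to an appropriate $\nu$-shift, realized via $\Psi^-$, while the lower $\Phi^+$-pieces, after applying \eqref{formula::Derivative-MainFormula}, reduce to distinction questions for smaller derivatives $\pi^{(kd)}$ which are again essentially Speh representations $\text{Sp}(\Delta,l)^{(kd)}$ of the form $\langlands(\Delta([\alpha+k/\cdots],\ldots))$ built from $\Delta$; I would show these lower pieces cannot by themselves account for the period, or more efficiently, track the ``first'' piece that can contribute and identify it with $\text{Sp}(\Delta,l-1)$ twisted by $\nu^{-1/2}$. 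The upshot is: if $\pi = \text{Sp}(\Delta,l)$ is $H_{p,q}$-distinguished (resp. $(H_{p,q},\mu_{-1/2})$-distinguished), then $\text{Sp}(\Delta,l-1)$ is $(H_{p',q'},\mu_{a'})$-distinguished for suitable $p',q',a'$ determined by the combinatorics of \eqref{formula::Derivative-MainFormula} — crucially with $a' \in \{0,-1/2\}$ so that the inductive hypothesis applies. Then by induction $d$ is even and $\Delta$ is $H_{d/2,d/2}$-distinguished, which is what we want.

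The main obstacle I anticipate is controlling \emph{which} graded piece of the mirabolic filtration actually carries the period, and ensuring that the shift in the character stays inside the set $\{\mu_0,\mu_{-1/2}\}$ under the recursion in \eqref{formula::Derivative-MainFormula} — a single application of that formula sends $\mu_a$ to $\mu_{-a-1/2}$, so iterating it through several $\Phi^+$ layers shifts the character by half-integers, and one must check that the relevant piece (the one passing to $\text{Sp}(\Delta,l-1)$) lands on exactly $\mu_0$ when $l-1$ has the right parity and on $\mu_{-1/2}$ otherwise, consistently with how parabolic induction of Speh representations interacts with these characters. Handling the intermediate derivatives $\pi^{(kd)}$ for $0 < k < l$ requires knowing they are not distinguished with respect to the relevant twisted character, which one can extract from the non-self-duality obstruction (Proposition \ref{prop::ladder-Speh--selfdual}) combined with a central-character computation, or simply by noting that these derivatives are $\text{Sp}(\Delta',l)$-type ladders whose beginning and end segments cannot pair up correctly under Corollary \ref{cor::Geometric Lemma-Main}. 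A secondary technical point is making the ``restriction to the mirabolic'' argument precise: one needs the statement, provable exactly as in \cite{Kable-Asai-L-AJM} and \cite{Matringe-Linear+Shalika-JNT}, that $\Hom_{H_{p,q}}(\pi,\mu_a) \hookrightarrow \Hom_{P_n \cap H_{p,q}}(\pi|_{P_n},\mu_a)$, together with a dévissage of the Bernstein--Zelevinsky filtration into $\Psi^+\Psi^-$ and $\Phi^+(\cdots)$ layers.
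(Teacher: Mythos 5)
Your proposal correctly identifies the paper's overall strategy — induction on $l$ via the Bernstein--Zelevinsky filtration of $\pi|_{P_n}$, using Proposition \ref{prop::Derivative--MainFormula} to pass to the highest shifted derivative $\speh(\Delta,l-1)$ — but the heart of the argument is left as a gap, and some of your descriptions of the intermediate pieces are incorrect in a way that matters.

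First, a factual error: the highest nonvanishing derivative of $\pi = \speh(\Delta,l)$ is $\pi^{(d)}$ (with $d = \deg\Delta$), and the intermediate derivatives entering the filtration are $\pi^{(i)}$ for $1 \leq i < d$, not ``$\pi^{(kd)}$ for $0 < k < l$.'' By Lemma \ref{lem::ladder-derivative-left aligned}, these intermediate $\pi^{(i)}$ are \emph{not} essentially Speh: they are left-aligned ladders of the form $\langlands(\Delta_1',\nu^{(l-3)/2}\Delta,\ldots,\nu^{(1-l)/2}\Delta)$ with $\Delta_1'$ a proper shortening of $\nu^{(l-1)/2}\Delta$. So invoking Proposition \ref{prop::ladder-Speh--selfdual} (which is about essentially Speh representations) or asserting they are ``$\text{Sp}(\Delta',l)$-type ladders'' is not correct. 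And the claim that their non-distinction follows ``simply by noting that the beginning and end segments cannot pair up under Corollary \ref{cor::Geometric Lemma-Main}'' vastly understates what is required: the paper's proof of the vanishing $\Hom_{P_n\cap H_{p,q}}((\Phi^+)^{i-1}\Psi^+(\pi^{(i)}),\mu_a) = 0$ for $1 \leq i < d$ is the hardest part and splits into several cases according to the parity of $i$ and the value of $a \in \{0,-1/2\}$. Some cases reduce to irreducibility of $(\Phi^+)^{i_0}\Psi^+(\pi^{(i)})$ via \cite[3.3 Remarks]{BZ-I}; the genuinely nontrivial case applies Proposition \ref{prop::ladder-products of left aligned} to the product $\nu^{1/2}\Delta_1 \times \speh(\Delta,l-1)$ and runs a subcase analysis on the degree of $\Delta_1$, using Proposition \ref{prop::starting point}, Proposition \ref{prop::Speh--Nece+MaxiLevi--CharacterInGeneralPosition}, a central character computation, and the induction hypothesis to land on contradictions with $p \geq q$ and $d > 1$.

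Second, the conclusion $p = q$ for the untwisted case $a = 0$ is not established by your reduction; Proposition \ref{prop::Speh--Nece+MaxiLevi--CharacterInGeneralPosition} requires $a \neq 0$. In the paper's proof, $p = q$ drops out of the inductive step itself: after showing only the highest derivative contributes, one identifies $\Hom_{P_n \cap H_{p,q}}((\Phi^+)^{d-1}\Psi^+(\pi^{(d)}),\mu_a)$ with $\Hom_{H_{q-k,p-k}}(\speh(\Delta,l-1),\mu_{-a-1/2})$ (where $d = 2k$), and the induction hypothesis for $\speh(\Delta,l-1)$ forces $q-k = p-k$. You should make this explicit rather than relying only on the central-character argument, which does not resolve $a=0$.

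So: right scaffolding and right guiding principle, but the vanishing of the intermediate derivative contributions — which is the substance of the proposition — is waved at rather than proved, and the proposed shortcut for it would not work.
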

\begin{proof}
	We prove this by induction on $l$. The case $l = 1$ follows from Proposition \ref{prop::starting point}. Suppose that $\pi$ is $(H_{p,q},\mu_a)$-distinguished with $a = 0$ or $-1/2$. By Proposition \ref{prop::ladder-Speh--selfdual} we know that $\pi$ is self-dual, hence $\Delta$ is also self-dual. Note that $\pi$ is irreducible. By Lemma \ref{lem::symmetry}, we may assume that $p \geqslant q$. By the assumption on $\pi$, we have
	\begin{align*}
		\Hom_{P_n \cap H_{p,q}} (\pi|_{P_n}, \mu_a) \neq 0,
	\end{align*}
    where $a = 0$ or $-1/2$. By \cite[\S 3.5]{BZ-I}, the restriction $\pi|_{P_n}$ of $\pi$ to $P_n$ has a filtration which has composition factors $(\Phi^+)^{i-1}\Psi^+ (\pi^{(i)})$, $i = 1,\cdots, h$, where $\pi^{(h)}$ is the highest derivative of $\pi$. We first analyze linear functionals on these factor sapces using the theory of Bernstein-Zelevinsky derivatives. 
	
	(1)\ When $i = 2k$ is even. If $ q > k$ and $p > k-1$, by applying \eqref{formula::Derivative-MainFormula} repeatly, we have
	\begin{align}\label{formula::i even-1}
		\Hom_{P_n \cap H_{p,q}} ((\Phi^+)^{i-1}\Psi^+ (\pi^{(i)}), \mu_a)  &\cong \Hom_{P_{n-i+1} \cap H_{q-k,p-k+1}}(\Psi^+(\pi^{(i)}),\mu_{-a-1/2})  \\
		& \cong \Hom_{ H_{q-k,p-k}} (\nu^{1/2}\pi^{(i)},\mu_{-a-1/2}). \nonumber
	\end{align} 
	Otherwise, there exists $i_0 \geqslant 0$ such that
	\begin{align}\label{formula::i even-2}
		\Hom_{P_n \cap H_{p,q}} ((\Phi^+)^{i-1}\Psi^+ (\pi^{(i)}), \mu_a)  &\cong \Hom_{P_{n-i+i_0+1} }((\Phi^+)^{i_0}\Psi^+(\pi^{(i)}),\mu_{a'}),
	\end{align}
	where $a' = a$ or $-a - 1/2$ depending on $i_0$ odd or even.
	
	(2)\ When $i = 2k + 1$ is odd. If $ q > k$ and $p > k$, by applying \eqref{formula::Derivative-MainFormula} repeatly, we have
	\begin{align}\label{formula::i odd-1}
		\Hom_{P_n \cap H_{p,q}} ((\Phi^+)^{i-1}\Psi^+ (\pi^{(i)}), \mu_a)  &\cong \Hom_{P_{n-i+1} \cap H_{p-k,q-k}}(\Psi^+(\pi^{(i)}),\mu_{a})  \\
		& \cong \Hom_{H_{p-k,q-k-1}} (\nu^{1/2}\pi^{(i)},\mu_{a}).\nonumber
	\end{align} 
	Otherwise, there exists $i_0 \geqslant 0$ such that
	\begin{align}\label{formula::i odd-2}
		\Hom_{P_n \cap H_{p,q}} ((\Phi^+)^{i-1}\Psi^+ (\pi^{(i)}), \mu_a)  &\cong \Hom_{P_{n-i+i_0+1} }((\Phi^+)^{i_0}\Psi^+(\pi^{(i)}),\mu_{a'}),
	\end{align}
	where $a' = a$ or $-a - 1/2$ depending on $i_0$ even or odd.
	
	We claim that the factor spaces corresponding to non-highest derivatives contribute nothing, that is, we have
	\begin{align}\label{formula::Speh distinction-zero contribution}
		\Hom_{P_n \cap H_{p,q}} ( (\Phi^+)^{i - 1} \Psi^+ (\pi^{(i)}), \mu_a)  =  0,\quad \text{for all } 1 \leqslant i < h.
	\end{align}
	
	We shall discuss separately according to $i$ is even or odd, $a = 0$ or $-1/2$. Note first that, by Lemma \ref{lem::ladder-derivative-left aligned}, when $1 \leqslant i < h$, the $i$-th derivative $\pi^{(i)}$ is either $0$ or a ladder representation of the form
	\begin{align}\label{formula::ladder-derivative-Speh}
		\langlands(\Delta_1 \times \nu^{(l-3)/2}\Delta \times \cdots \times \nu^{(1-l)/2}\Delta),
	\end{align} 
	where $\Delta_1$ is a subsegment of $\nu^{(l-1)/2}\Delta$ obtained by discarding the first few terms. In particular, $\pi^{(i)}$ is either $0$ or an irreducible representation. Thus, if we are in the case where \eqref{formula::i even-2} or \eqref{formula::i odd-2} holds, then 
	\begin{align*}
		\Hom_{P_n \cap H_{p,q}} ((\Phi^+)^{i-1}\Psi^+ (\pi^{(i)}), \mu_a)  &\cong \Hom_{P_{n-i+i_0+1} }((\Phi^+)^{i_0}\Psi^+(\pi^{(i)}),\mu_{a'}) \\
		&= 0,
	\end{align*}
	as the representation $(\Phi^+)^{i_0}\Psi^+(\pi^{(i)})$ is either $0$ or an irreducible representation of $P_{n-i+i_0+1}$ that is not one dimensional by \cite[3.3 Remarks]{BZ-I}.
	
	Now we deal with the case where \eqref{formula::i even-1} or \eqref{formula::i odd-1} holds. Note that, from \eqref{formula::ladder-derivative-Speh}, $\nu^{1/2}\pi^{(i)}$ either is $0$ or can be realized as the unique irreducible quotient of a representation of the form $\nu^{1/2}\Delta_1 \times \speh(\Delta,l-1)$ with $\Delta_1$ as above. We discuss as follows.
	
	Case (1) where $a = 0$ and $i = 2k$ is even. By \eqref{formula::i even-1}, it suffices to show that 
	\begin{align}\label{formula::ladder--case 1}
		\Hom_{ H_{q-k,p-k}}( \nu^{1/2}\Delta_1 \times \speh(\Delta,l-1), \shskip \mu_{-1/2}) = 0.
	\end{align}
	Assume, on the contrary, that $\nu^{1/2}\Delta_1 \times \speh(\Delta,l-1)$ is $(H_{q-k,p-k},\mu_{-1/2})$-distinguished. As $\Delta$ is self-dual, $\mathbf{e}(\speh(\Delta,l-1))^{\vee} = \mathbf{b}(\speh(\Delta,l-1)) \neq \mathbf{b}(\nu^{1/2}\Delta_1)$. So, by Proposition \ref{prop::Geometric Lemma--Products Products},  
	\begin{align*}
		 \text{ $\nu^{1/2}\Delta_1$ is $(H_{r,s},\mu_{(s - p- r + q-1)/2})$-distinguished }
	\end{align*}
   and 
   \begin{align*}
   	\text{$\speh(\Delta,l-1)$ is $(H_{q - k -r, \shskip p - k -s},\mu_{(s - r -1)/2})$-distinguished}
   \end{align*}
   for some nonnegative integers $r$ and $s$. If the degree of $\nu^{1/2}\Delta_1$ is greater than $1$, then $\nu^{1/2}\Delta_1$ is self-dual by Proposition \ref{prop::starting point}. This is absurd because the central character of $\nu^{1/2}\Delta_1$ has positive real part; If the degree of $\nu^{1/2}\Delta_1$ is $1$, then $(r,s) = (1,0)$ or $(0,1)$. If $r =1$ and $s = 0$, then $\speh(\Delta, l -1)$ is $(H_{q-k-1,p-k},\mu_{-1})$-distinguished. Thus we have $ p = q - 1$ by Corollary\ref{cor::Speh--Nece+MaxiLevi--CharacterInGeneralPosition}. This is absurd as we have assumed that $p \geqslant q$; If $r = 0$ and $ s = 1$, then $\nu^{1/2}\Delta_1$ is the character $\nu^{(p-q)/2}$ of $G_1$ and $\speh(\Delta,l-1)$ is $(H_{q-k,\shskip p-k-1},\mathbf{1})$-distinguished. So, by induction hypothesis, we have $p - 1 = q$. This implies that $\mathbf{e}(\nu^{(l-1)/2}\Delta) = \mathbf{e}(\Delta_1) = \mathbf{1}$, the trivial character of $G_1$. This is impossible as $\Delta$ is self-dual and its degree $d$ is greater than $1$.
	
	Case (2) where $a = 0$ and $i = 2k+1$ is odd. In this case we see easily that
	\begin{align}\label{formula::ladder--case 2}
		\Hom_{H_{p-k,q-k-1}} (\nu^{1/2}\pi^{(i)},\mathbf{1}) = 0,
	\end{align}
	as the central character of $\nu^{1/2}\pi^{(i)}$ has positive real part when $i < h$.
	
	The arguments for the remaining two cases where $a = -1/2$, $i$ is even or odd are similar to those of the above two cases and are omitted here. So we have proved \eqref{formula::Speh distinction-zero contribution}.
	
	By Lemma \ref{lem::ladder-derivative-left aligned}, we know that the highest derivative of $\pi$ is $\pi^{(d)}$ and $\nu^{1/2}\pi^{(d)}  = \speh(\Delta,l-1)$. Now we have 
	\begin{align}\label{formula::ladder--maxi Levi-highest derivative part}
		\Hom_{P_n \cap H_{p,q}} ( (\Phi^+)^{d-1} \Psi^+ (\pi^{(d)}), \mu_a)  \neq  0,
	\end{align}
	where $a = 0$ or $-1/2$. We analyze the left hand side of \eqref{formula::ladder--maxi Levi-highest derivative part} as above. The cases \eqref{formula::i even-2} and \eqref{formula::i odd-2} cannot happen by the same arguments as above. The case \eqref{formula::i odd-1} cannot happen by induction hypothesis and the fact that $p \geqslant q$. So the only possible case is when \eqref{formula::i even-1} holds, that is, $d$ is even and 
	\begin{align*}
		\Hom_{P_n \cap H_{p,q}} ((\Phi^+)^{i-1}\Psi^+ (\pi^{(d)}), \mu_a)  \cong \Hom_{ H_{q-k,p-k}} (\speh(\Delta,l-1),\mu_{-a-1/2}). 
	\end{align*}
	Note that when $a = 0$ or $-1/2$, $-a-1/2 = -1/2$ or $0$. Thus we are done by induction hypothesis.
\end{proof}

 We have the following generalization of Corollary \ref{cor::discrete series--max Levi} to essentially Speh representations.

\begin{cor}\label{cor::Speh + distinguishe = max Levi G_m,m}
	Let $\pi$ be an essentially Speh representation of $G_n$ that is not one dimensional. If $\pi$ is $(H_{p,q},\mu_a)$-distinguished for two positive integers $p$, $q$ with $p + q = n$ and $a \in \BR$, then we have $p = q$.
\end{cor}
\begin{proof}
	The case $a \neq 0$ is Corollary \ref{cor::Speh--Nece+MaxiLevi--CharacterInGeneralPosition}. The case $a = 0$ follows from Proposition \ref{prop::Speh--Nece+MaxiLevi}.
\end{proof}
\begin{rem}
	We postpone the proof of the other direction of Theorem \ref{thm::1 Speh} in Section \ref{section::Speh case}.
\end{rem}

\subsection{Distinguished left aligned representations}

The results of this subsection are used only in Section \ref{section::general case} where we classify distinguished representations that are products of Speh representations. The analysis in this subsection is quite involved; the readers can skip it for the fisrt reading. 

The purpose of this subsection is to show the following
\begin{prop}\label{prop::left aligned + distinguish = Speh}
		Let $\pi$ be a left aligned (resp. right aligned) representation of $G_n$. If $\pi$ is $(H_{p,q},\mu_{(p-q)/2})$ (resp. $(H_{p,q},\mu_{(q-p)/2})$)-distinguished for two nonnegative integers $p$, $q$, $p + q =n$, then $\pi$ is an essentially Speh representation.
\end{prop}


We need the following technical lemmas. When the supercuspidal representations in the support of the left aligned representation have degree greater than $1$, we can prove slightly more.

\begin{lem}\label{lem::ladder distinction-same length}
	Let $\rho \in \mathscr C(G_d)$, $d > 1$, and $\mathfrak{m} = \{\Delta_1, \cdots, \Delta_t \} \in \mathscr O_{\rho}$ be a ladder. Assume that $\pi = \langlands(\mathfrak{m})$ is a decreasing or  an increasing ladder representation of $G_n$. If $\pi$ is $(H_{p,q},\mu_a)$-distinguished for two positive integers $p$, $q$, $p + q =n$ and some $a \in \BR$, then all the $l(\Delta_i)$'s are the same. Moreover, $\pi$ is self-dual.
\end{lem}
\begin{proof}
	Note that $\pi$ is irreducible. By Lemma \ref{lem::symmetry}, passing to contragradient if necessary, we may assume that $l(\Delta_1) \leqslant l(\Delta_2) \leqslant \cdots \leqslant l(\Delta_t)$.By our assumption, the representation $\Delta_1 \times \cdots \times \Delta_t$ is $(H_{p,q},\mu_a)$-distinguished. We now appeal to Proposition \ref{prop::Geometric Lemma-Main}. Write $\Delta_i = \Delta([a_i,b_i]_{\rho})$, $i = 1, 2, \cdots,t$. Note that by our assumption that $d > 1$, Case A1 and Case B1 cannot happen.
	
	Case A2. In this case We have $a_t  \leqslant c_t < b_t$, and $\Delta( [a_t,c_t]_{\rho})$ is self-dual. Thus we have $\nu^{a_t} \rho \cong \nu^{-c_t} \rho^{\vee}$, and consequently $(a_t + c_t)d + 2 \Re(w_{\rho}) = 0$. We also have $\Delta( [c_t+1,b_t]_{\rho} )^{\vee} \cong \Delta([a_i,c_i]_{\rho})$ for some $i < t$ and $c_i \geqslant a_i$. Thus we get $\nu^{a_i}\rho \cong \nu^{-b_t}\rho^{\vee}$, and then $(a_i + b_t)d + 2 \Re (w_{\rho}) = 0$. But this is absurd because $a_i > a_t$ and $b_t > c_t$.
	
	Case B2. In this case we have $c_t = b_t$,  and $\Delta( [a_t,b_t]_{\rho})$ is self-dual. Thus we have $\nu^{a_t} \rho \cong \nu^{-b_t} \rho^{\vee}$, and consequently $(a_t + b_t)d + 2 \Re (w_{\rho}) = 0$. We also have $\Delta_1 \times \cdots \times \Delta_{t-1}$ is $(H_{p',q'},\mu_{a'})$-distinguished for some $p'$, $q'$ and $a'$. If $t = 1$, there is nothing to be proved. If $t > 1$, by Corollay \ref{cor::Geometric Lemma-Main}, we get that $(\nu^{b_{t-1}}\rho)^{\vee} \cong \nu^{a_i}\rho$ for some $1 \leqslant i \leqslant t-1$. Thus we get $(a_i + b_{t-1})d + 2 \Re (w_{\rho}) = 0$. This is absurd because $a_i > a_t $ and $b_{t-1} > b_t$. 
	
	So the only possible case is Case C. We then have $\Delta([a_t,b_t])^{\vee} \cong \Delta([a_i,c_i] )$ for $i < t$ and certain $ a_i \leqslant c_i \leqslant b_i$. Note that, by our assumption, we have $l(\Delta_i) \leqslant l(\Delta_t)$. Thus we have $l(\Delta_i) = l(\Delta_t)$. We claim that $i = 1$. If so,  all $l(\Delta_i)$'s will be the same by our assumption. Indeed, if $i > 1$, consider the $(H_{p,q},\mu_a)$-distinguished representaion
	\begin{align*}
		(\Delta_1 \times \cdots  \times \Delta_{i-1}) \times (\Delta_i \times \cdots \times \Delta_t).
	\end{align*}
	By Propostion \ref{prop::Geometric Lemma--Products Products}, either we have $\mathbf{e}(\Delta_{i-1})^{\vee} \cong \mathbf{b}(\Delta_a)$ with $1 \leqslant a \leqslant t-1$, or we have $\mathbf{e}(\Delta_c)^{\vee} \cong \mathbf{b}(\Delta_b)$ with $1 \leqslant b \leqslant t-1$ and $i \leqslant c \leqslant t$. We then get a contradiction as in Case A2 or B2. The assertion on the self-dualness property follows from a repeated analysis as above.
\end{proof}

If we drop the assumption that $d > 1$, the argument becomes complicated by the possible occurrence of Case A1 or Case B1 when applying Proposition \ref{prop::Geometric Lemma-Main}. We have the following result
on the shape of right aligned representations when it is distinguished. 

\begin{lem}\label{lem::distinguished right aligned--shape}
	Let $\rho$ be a character of $G_1$, and $\mathfrak{m} \in \mathscr O_{\rho}$ be a ladder. Assume that $\pi = \langlands(\mathfrak{m})$ is a right aligned representation of $G_n$. If $\pi$ is $(H_{p,q},\mu_a)$-distinguished for two nonnegative integers $p$, $q$, $p + q =n$ and some $a \in \BR$, then either
	
	(1)$\ $we have
	\begin{align}\label{formula::distinguished right aligned--1}
		\frm  =  \{ \Delta_1,\cdots,\Delta_{i_1}, \Delta_{i_1 + 1}, \cdots, \Delta_{i_1 + i_2}, \Delta_{i_1 + i_2 +1}, \cdots,\Delta_{i_1 + i_2 +i_3} \}
	\end{align}
	with $i_1$, $i_2$ and $i_3 \geqslant 0$, such that $l(\Delta_k) = 1$ when $1\leqslant k \leqslant i_1$, $l(\Delta_{i_1 + k})  = l > 1$ when $1 \leqslant k \leqslant i_2$, $l(\Delta_{i_1 + i_2 + k})) = l + 1$ when $1 \leqslant k\leqslant i_2$, and that $\mathbf{e}(\Delta_{i_1 + i_2 + i_3})^{\vee} \cong \mathbf{b}(\Delta_{i_1 + 1})$ (See Figure \ref{fig::1} for an example),
	
	or
	
	(2)$\ $we have 
	\begin{align}\label{formula::distinguished right aligned--2}
		\frm  =  \{\Delta_1,\cdots, \Delta_{i_1}, \Delta_{i_1 + 1}, \cdots ,\Delta_{i_1  + i_2}  \}
	\end{align} 
	with $i_1$ and $i_2 >0$, such that $l(\Delta_k) = 1$ when $1 \leqslant k \leqslant i_1$, $l(\Delta_{i_1 + k}) = 2$ when $1 \leqslant k \leqslant i_2$, and that $\mathbf{e}(\Delta_{i_1 + i_2})^{\vee} \cong \mathbf{b}(\Delta_1)$ (See Figure \ref{fig::2} for an example).
\end{lem}
\begin{proof}
	%
	%
	Write $\frm = \{\Delta_1,\cdots,\Delta_t \}$. If $l(\Delta_t) =1$, then $\pi$ is a one dimensional representation and $\frm$ is of the form \eqref{formula::distinguished right aligned--1} with $i_2 = i_3 = 0$. If $l(\Delta_t) = l(\Delta_1) =2 $, then $\pi$ is an essentially Speh representation. It follows from Proposition \ref{prop::ladder-Speh--selfdual} that $\frm$ is of the form \eqref{formula::distinguished right aligned--1} with $i_1 = i_3 = 0$. If $l(\Delta_t) = 2$ and $l(\Delta_1) = 1$, then $\pi$ can be realized as the unique irreducible quotient of $\pi_1 \times \pi_2$, where $\pi_1$ is a one dimensional representation and $\pi_2$ is an essentially Speh representation of length $2$. Thus $\pi_1 \times \pi_2$ is $(H_{p,q},\mu_a)$-distinguished. By Proposition \ref{prop::ladder-products of left aligned}, $\frm$ is either of the form \eqref{formula::distinguished right aligned--2} (Case A), or of the form \eqref{formula::distinguished right aligned--1} with $i_3 = 0$, $i_1 > 0$, $i_2 > 0$ and $l = 2$ (Case B and Proposition \ref{prop::ladder-Speh--selfdual}). Note that here Case C is impossible by our assumption on $\pi_1$ and $\pi_2$. If $l(\Delta_t) > 2$, then we apply Proposition \ref{prop::Geometric Lemma-Main} to the product $\Delta_1 \times \cdots \times \Delta_t$ and discuss case by case. Note first that Case A2 cannot happen by similar arguments as those in Lemma \ref{lem::ladder distinction-same length}; Case B1 cannot happen by our assumption on $\Delta_t$. In the remaining cases, it follows from Corollary \ref{cor::Geometric Lemma-Main}, Proposition \ref{prop::Geometric Lemma--Products Products} and arguments similar to those in Lemma \ref{lem::ladder distinction-same length} that $\frm$ is of the form \eqref{formula::distinguished right aligned--1}. 
\end{proof}

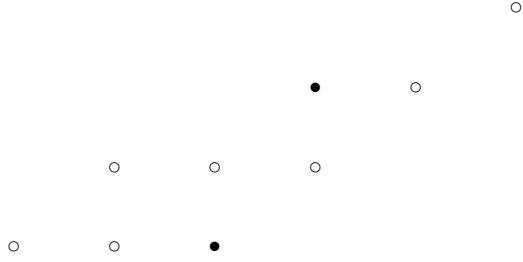
\begin{figure}[h]
	\begin{center}
		\begin{tikzcd}
			&       &         &         &       & \circ \\
			&       &         & \bullet & \circ &       \\
			& \circ & \circ   & \circ   &       &       \\
			\circ & \circ & \bullet &         &       &      
		\end{tikzcd}
		
		\caption{An example of a ladder of the form \eqref{formula::distinguished right aligned--1} with $i_1 = i_2 = 1$ and $i_3 = 2$}
		\label{fig::1}
	\end{center}
	
\end{figure}

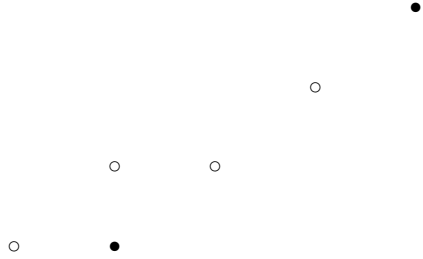
\begin{figure}[h!]
	\begin{center}
		\begin{tikzcd}
			&         &       &       & \bullet \\
			&         &       & \circ &         \\
			& \circ   & \circ &       &         \\
			\circ & \bullet &       &       &        
		\end{tikzcd}
		\caption{An example of a ladder of the form \eqref{formula::distinguished right aligned--2} with $i_1 = 2$ and $i_2 = 2$}
		\label{fig::2}
	\end{center}
\end{figure}

The following lemma is a simple consequence of Lemma \ref{lem::one dimensional--distinction}.  
\begin{lem}\label{lem::one dimensional--special condition}
	Let $\pi$ be a one dimensional representation of $G_n$. If $\pi$ is $(H_{p,q},\mu_{(q-p)/2})$-distinguished with $p + q = n$, then $\pi$ is either the trivial character $\mathbf{1}$ of $G_n$ or the character $\nu^{-n/2}$ of $G_n$. In particular, $\mathbf{b}(\pi)$ is either $\nu^{(n-1)/2}$ or $\nu^{-1/2}$ of $G_1$.
\end{lem}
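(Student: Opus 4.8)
The statement is an immediate consequence of Lemma~\ref{lem::one dimensional--distinction}, so the plan is simply to apply that lemma with $a = (q-p)/2$ and then read off the beginnings of the two resulting characters. First I would dispose of the two degenerate cases: if $q = 0$ then $p = n$ and $a = -n/2$, so Lemma~\ref{lem::one dimensional--distinction} gives $\pi = \nu^{a} = \nu^{-n/2}$; symmetrically, if $p = 0$ then $q = n$ and $a = n/2$, so the same lemma gives $\pi = \nu^{-a} = \nu^{-n/2}$. In the remaining case $p, q > 0$, Lemma~\ref{lem::one dimensional--distinction} forces $a = 0$ (which is consistent with $a = (q-p)/2$, and in fact already says $p = q$) and $\pi = \mathbf{1}$. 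Hence $\pi$ is either the trivial character $\mathbf{1}$ of $G_n$ or the character $\nu^{-n/2}$ of $G_n$.

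For the assertion about $\boldb(\pi)$, I would recall the standard description of these two one-dimensional characters as ladder (indeed essentially Speh) representations. The trivial representation $\mathbf{1}_{G_n}$ is $\langlands(\frm)$, where $\frm$ is the ladder of $n$ length-one segments with beginnings $\nu^{(n-1)/2}, \nu^{(n-3)/2}, \dots, \nu^{-(n-1)/2}$ (equivalently, its cuspidal support is the Weyl orbit of $\delta_B^{1/2}$, equivalently $\mathbf{1}_{G_n} = \speh(\mathbf{1}_{G_1}, n)$); in particular $\boldb(\mathbf{1}_{G_n}) = \nu^{(n-1)/2}$. Twisting by the character $\nu^{-n/2}$ of $G_n$ translates each beginning down by $n/2$, so $\nu^{-n/2}_{G_n}$ is the ladder representation attached to the length-one segments with beginnings $\nu^{-1/2}, \nu^{-3/2}, \dots, \nu^{-(2n-1)/2}$, whence $\boldb(\nu^{-n/2}_{G_n}) = \nu^{-1/2}$. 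Combining, $\boldb(\pi)$ is $\nu^{(n-1)/2}$ or $\nu^{-1/2}$, as claimed.

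There is no real obstacle here; the statement is short and follows formally from Lemma~\ref{lem::one dimensional--distinction}. The only points requiring care are the sign bookkeeping, namely correctly matching the exponent $a = (q-p)/2$ in $\mu_a$ against the two blocks $G_p$ and $G_q$ in the degenerate cases, and correctly identifying the cuspidal support (hence the beginning) of a one-dimensional representation and of its $\nu^{-n/2}$-twist.
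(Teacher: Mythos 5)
Your proof is correct and matches the approach the paper intends (the paper simply remarks that the lemma is a simple consequence of Lemma~\ref{lem::one dimensional--distinction} and omits the details). The case split on $p,q$, the sign bookkeeping with $a=(q-p)/2$, and the identification $\mathbf{1}_{G_n}=\speh(\mathbf{1}_{G_1},n)$ with $\boldb(\mathbf{1}_{G_n})=\nu^{(n-1)/2}$ and hence $\boldb(\nu^{-n/2})=\nu^{-1/2}$ are all right.
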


As shown in Lemma \ref{lem::distinguished right aligned--shape}, there are two possibilities for the shape of distinguished right aligned representations. Now we remove one possibility if we impose some restriction on the subscripts $(p,q,a)$. 

\begin{lem}\label{lem::distinguished right aligned--False--special condition}
	Keep the notation as in Lemma \ref{lem::distinguished right aligned--shape}, let $\pi = \langlands(\frm)$ with $\frm$ of the form \eqref{formula::distinguished right aligned--2}. Then $\pi$ cannot be $(H_{p,q},\mu_{(q-p)/2})$-distinguished.
\end{lem}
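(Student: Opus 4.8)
The plan is to argue by contradiction. Suppose $\pi=\langlands(\frm)$, with $\frm$ as in \eqref{formula::distinguished right aligned--2}, is $(H_{p,q},\mu_{(q-p)/2})$-distinguished. Write $\chi=\langlands(\{\Delta_1,\dots,\Delta_{i_1}\})$ for the one-dimensional representation built from the length-one segments and $\sigma=\langlands(\{\Delta_{i_1+1},\dots,\Delta_{i_1+i_2}\})$ for the essentially Speh representation all of whose segments have length two; note that $\sigma$ is not one-dimensional since $i_2>0$. Arranging the standard module of $\pi$ so that the length-two segments precede the length-one ones, one sees that $\pi$ is a quotient of $\sigma\times\chi$, so $\sigma\times\chi$ is $(H_{p,q},\mu_{(q-p)/2})$-distinguished. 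The first step is then to apply Proposition \ref{prop::ladder-products of left aligned} to $\sigma\times\chi$, with $\chi$ in the role of the last factor. The point to record is that, with $a=(q-p)/2$, every smaller product produced by that proposition is again distinguished with respect to the \emph{corresponding} special character; this is the bookkeeping identity $\tfrac{q-p}{2}+\tfrac{s'-r'}{2}+\tfrac{p-q}{2}=\tfrac{s'-r'}{2}$ (and its analogues), using the degree relations implicit in the proposition. Lemma \ref{lem::one dimensional--distinction} and Lemma \ref{lem::one dimensional--special condition} are also available to constrain any one-dimensional piece that appears.

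Next I would dispose of Case B of Proposition \ref{prop::ladder-products of left aligned}: there $\sigma$ itself is distinguished with respect to a special character, so, $\sigma$ being essentially Speh and not one-dimensional, Proposition \ref{prop::ladder-Speh--selfdual} forces $\sigma$ to be self-dual and hence $\bolde(\sigma)^{\vee}\cong\boldb(\sigma)=b(\Delta_{i_1+1})$. But the defining property of \eqref{formula::distinguished right aligned--2} reads $\bolde(\sigma)^{\vee}=e(\Delta_{i_1+i_2})^{\vee}\cong b(\Delta_1)=\boldb(\chi)$, and $b(\Delta_1)>b(\Delta_{i_1+1})$ gives a contradiction.

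For Cases A and C the argument is combinatorial. In either case the proposition produces a division $\sigma=\sigma'\sqcup\sigma''$ in which $\sigma'$ is identified, up to contragredient, with a division piece of the one-dimensional $\chi$, and $\sigma''$ is distinguished with respect to a special character. Since a one-dimensional representation has as Langlands datum a string of consecutive singleton segments, the requirement that $\sigma'$ be one-dimensional forces the division of $\sigma$ to detach a \emph{prefix} of singletons and leave every remaining segment whole; matching exponents with that datum then pins down $i_2=i_1+1$ and determines $\sigma''$ explicitly, as a right-aligned ladder consisting of $m\geqslant 1$ singleton segments followed by at least one length-two segment. A direct computation --- using only the defining condition $e(\Delta_{i_1+i_2})^{\vee}\cong b(\Delta_1)$ of $\frm$ together with right-alignment --- shows that $\bolde(\sigma'')^{\vee}\cong b(\Delta_1)$, whereas every segment of $\sigma''$ has beginning of strictly smaller $\nu$-exponent than $b(\Delta_1)$; in particular $\bolde(\sigma'')^{\vee}$ equals neither $\boldb(\sigma'')$ nor the beginning of the first segment of $\sigma''$ of length $\geqslant 2$. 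But $\sigma''$ is right-aligned and distinguished, so Proposition \ref{prop::distinguished right aligned--shape} forces it into shape \eqref{formula::distinguished right aligned--1} or \eqref{formula::distinguished right aligned--2}, each of which requires $\bolde(\sigma'')^{\vee}$ to match exactly one of those two beginnings --- a contradiction, which finishes the proof.

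The step I expect to be the main obstacle is the combinatorics of Cases A and C: one must track the two simultaneous divisions (of $\sigma$ and of $\chi$) with care, establish the ``prefix'' structure of the admissible divisions of $\sigma$, and then check that the resulting $\sigma''$ is incompatible with the classification in Proposition \ref{prop::distinguished right aligned--shape}. Verifying the ``special character is preserved'' assertion is a secondary, essentially clerical task, since the parameter relations on which it rests are only implicit in the statement of Proposition \ref{prop::ladder-products of left aligned}.
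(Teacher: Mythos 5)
Your very first reduction is wrong, and this undermines the rest of the argument. You factor the standard module as $\sigma\times\chi$, with the essentially Speh factor $\sigma$ (from the length-two segments) \emph{first} and the one-dimensional factor $\chi$ (from the singletons) \emph{last}, and assert that $\pi$ is a quotient of $\sigma\times\chi$. But the bottom singleton $\Delta_{i_1}=\{\nu^{a}\rho\}$ and the top length-two segment $\Delta_{i_1+1}=[\nu^{a-2}\rho,\nu^{a-1}\rho]$ are linked (their union is the segment $[\nu^{a-2}\rho,\nu^{a}\rho]$), and $b(\Delta_{i_1})>b(\Delta_{i_1+1})$, so any standard-form ordering must place $\Delta_{i_1}$ before $\Delta_{i_1+1}$, i.e.\ must put $\chi$ before $\sigma$. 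Hence $\pi$ is the cosocle of $\chi\times\sigma$ but only the socle of $\sigma\times\chi$, and distinction of $\pi$ does \emph{not} pass to $\sigma\times\chi$; the implication only goes from a quotient, not from a submodule. So the sentence ``$\pi$ is a quotient of $\sigma\times\chi$, so $\sigma\times\chi$ is $(H_{p,q},\mu_{(q-p)/2})$-distinguished'' is unjustified, and so is the subsequent application of Proposition \ref{prop::ladder-products of left aligned} with $\chi$ as the last factor.

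This is not merely a notational slip: reversing the order of factors changes which factor is divided by the geometric lemma and flips the dual-identification between pieces of $\sigma$ and of $\chi$. The paper's own proof works with $\pi_1\times\pi_2 = \chi\times\sigma$ (the correct order), applies Proposition \ref{prop::ladder-products of left aligned} with $\sigma$ as the last factor, and then argues by subcases on the pair $(r,s)$ for which $\pi_1''$ (the lower part of $\chi$, which is the one piece guaranteed to carry the \emph{special} character $\mu_{(s-r)/2}$) is distinguished; the contradictions are then extracted from comparing $\boldb(\pi_2'')$, $\bolde(\pi_1')$, $\boldb(\pi_1'')$, and so on. Your Case A/C analysis --- detaching a ``prefix of singletons'' from $\sigma$, then showing the complementary piece of $\sigma$ is a right-aligned distinguished representation incompatible with Proposition \ref{prop::distinguished right aligned--shape} --- is a plausible line of attack, but it is built on the wrong product, so it would need to be re-derived from scratch; as written it does not constitute a proof. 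Your Case B disposal (self-duality of $\sigma$ via Proposition \ref{prop::ladder-Speh--selfdual}, contradicted by $\bolde(\sigma)^{\vee}\cong b(\Delta_1)\ne b(\Delta_{i_1+1})$) is fine in isolation, but again presupposes the flawed first step.
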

\begin{proof}
	We assume on the contrary that $\pi$ is $(H_{p,q},\mu_{(q-p)/2})$-distinguished. By part $(1)$ of Lemma \ref{lem::symmetry}, we may assume that $p \leqslant q$. Note that $\pi$ can be realized as the unique quotient of $\pi_1 \times \pi_2$, where $\pi_1$ is a one dimensional representation, $\pi_2$ is an essentially Speh representation of length $2$, and $\mathbf{e}(\pi_2)^{\vee} \cong \mathbf{b}(\pi_1)$. Thus, $\pi_1 \times \pi_2$ is $(H_{p,q},\mu_{(q-p)/2})$-disintuished. By Proposition \ref{prop::ladder-products of left aligned}, there exist divisions of $\pi_1$ and $\pi_2$, $\pi_1 = \pi_1' \sqcup \pi_1''$ and $\pi_2 = \pi_2' \sqcup \pi_2''$ respectively, such that, among other things, $\pi_1''$ is $(H_{r,s},\mu_{(s-r)/2})$-distinguished for two nonnegative integers $r$ and $s$. Note that $\pi_1'$ is not the trivial representation of $G_0$ by our assumption on $\pi$ and Proposition \ref{prop::ladder-Speh--selfdual}. We shall discuss further according to the values of $r$ and $s$.
	
	(1)$\ $If exactly one of $r$ and $s$ is $0$, then $\pi_1''$ is the character $\nu^{-n_1''/2}$ of $G_{n_1''}$. Thus $\mathbf{b}(\pi_1'') = \nu^{-1/2}$, $\mathbf{e}(\pi_1') = \nu^{1/2}$. By  Proposition \ref{prop::ladder-products of left aligned}, we also have $\pi_1'^{\vee} \cong \pi_2''$. Thus $\mathbf{b}(\pi_2'') = \mathbf{e}(\pi_1')^{\vee} = \nu^{-1/2} = \mathbf{b}(\pi_1'')$, which is absurd.
	
	(2)$\ $If $r > 0$ and $s > 0$, then $\pi_1''$ is the character $\mathbf{1}$ of $G_{2r}$, that is, $\mathbf{b}(\pi_1'') = \nu^{r - 1/2}$ and $\mathbf{e}(\pi_1'') = \nu^{-r + 1/2}$. So, $\mathbf{e}(\pi_1') = \mathbf{b}(\pi_1'')  + 1 = \nu^{r+1/2}$. By Proposition \ref{prop::ladder-products of left aligned}, we have $\pi_1'^{\vee} \cong \pi_2''$. So $\mathbf{b}(\pi_2'') = \nu^{-r-1/2}$. By our assumption on the shape of $\mathfrak{m}$, this implies that $\pi_2'$ is also a one dimensional representation which, by Proposition \ref{prop::ladder-products of left aligned}, is $(H_{r',s'},\mu_{(r' -s')/2+q - p})$-distinguished for certain nonnegative integers $r'$ and $s'$ and that $\mathbf{b}(\pi_2') = \mathbf{b}(\pi_2'') - 1 = \nu^{-r-3/2}$. One of $r'$ and $s'$ has to be $0$. Recall that we have assumed that $p \leqslant q$. We then see easily that $\pi_2'$ is the character $\nu^{p-q + n' /2}$ of $\GL_{n'}$. Note that we have an equality of central characters, $\omega_{\pi_2'} = \omega_{\pi}$. This implies that
	\begin{align*}
		n'(p-q + n' /2) = - (p-q)^2  / 2.
	\end{align*}
    So, $n' = q - p$ and $\mathbf{b}(\pi_2') = \nu^{-1/2}$. This is absurd as we have shown that $\mathbf{b}(\pi_2') = \nu^{-r - 3/2}$ with $r$ a positive integer.
	
	(3)$\ $If $r = s =0$, we have two subcases according to whether or not $\pi_2'$ is a one dimensional representaiton. If it is, we get a contradiction by exactly the same arguments as in $(2)$ with $r$ being repalced by $0$. If it is not, it follows from the duality relations in Lemma \ref{lem::distinguished right aligned--shape}, applied to the contragradient of $\pi_2'$, that $\mathbf{b}(\pi_2')^{\vee}$ is either $\mathbf{b}(\pi_2'') + 1$ or $\mathbf{e}(\pi_2'') - 1$. But, note that $\mathbf{b}(\pi_2') = \mathbf{e}(\pi_1') - 2$. It follows from the relation $\pi_1'^{\vee} \cong \pi_2''$ that $\mathbf{b}(\pi_2') ^{\vee} = \mathbf{b}(\pi_2'') + 2$. This is absurd as $\pi_2''$ is one dimensional and $\mathbf{b}(\pi_2'') \geqslant \mathbf{e}(\pi_2'')$.
\end{proof}

\begin{proof}[Proof of Proposition \ref{prop::left aligned + distinguish = Speh}]
	By part $(2)$ of Lemma \ref{lem::symmetry}, we only need to prove the statement for left aligned representations.  We may further assume that $p \leqslant q$ by part $(1)$ of Lemma \ref{lem::symmetry}. By Lemma \ref{lem::ladder distinction-same length}, we may write $\pi = \langlands(\mathfrak{m})$ with $\mathfrak{m} \in \mathscr O_{\rho}$ a ladder and $\rho$ a character of $G_1$.  By considering the contragredient $\pi^{\vee} = \langlands(\frm^{\vee})$, we see from Lemma \ref{lem::distinguished right aligned--False--special condition} that  $\frm^{\vee}$ is of the form \eqref{formula::distinguished right aligned--1}. So, we may write
	\begin{align*}
		\frm  =  \{ \Delta_1,\cdots,\Delta_{i_1}, \Delta_{i_1 + 1}, \cdots, \Delta_{i_1 + i_2}, \Delta_{i_1 + i_2 +1}, \cdots,\Delta_{i_1 + i_2 +i_3} \}
	\end{align*}
	with $i_1$, $i_2$ and $i_3 \geqslant 0$, such that $l(\Delta_k) = l_1 > 2$ when $1\leqslant k \leqslant i_1$, $l(\Delta_{i_1 + k})  = l_1 -1$ when $1 \leqslant k \leqslant i_2$, $l(\Delta_{i_1 + i_2 + k})) = 1 $ when $1 \leqslant k\leqslant i_3$, and that $\mathbf{e}(\Delta_{i_1 + i_2 })^{\vee} \cong \mathbf{b}(\Delta_1)$.
	
	We may as well assume that $i_1$ and $i_2$ are not all zero. Our first step is to show that $i_3 = 0$. If not so, we realize $\pi$, in the obvious way, as the unique irreducible quotient of $\pi_1 \times \pi_2 \times \pi_3$ with $\pi_i$ an essentially Speh representation for each $i$, such that $\pi_3$ is a character of $G_{n_3}$, $n_3 > 0$, and that at least one of $\pi_1$ and $\pi_2$ is not the trivial representation of $G_0$. By our assumption on $\pi$, the representation $\pi_1 \times \pi_2 \times \pi_3$ is $(H_{p,q},\mu_{(p-q)/2})$-distinguished. Note that as $i_3 > 0$, $\mathbf{e}(\pi_3)$ is not dual to $\mathbf{b}(\pi_1)$ or $\mathbf{b}(\pi_2)$. So, by Proposition \ref{prop::Geometric Lemma--Products Products}, $\pi_3$ is $(H_{r,s},\mu_{(r-s)/2})$-distinguished with respect to two nonnegative integers $r$ and $s$. As $\pi_3$ is one dimensional, $\pi_3$ is either the trivial represntation $\mathbf{1}$ of $G_{n_3}$ or the character $\nu^{n_3/2}$ of $G_{n_3}$. In particular, $\mathbf{b}(\Delta_{i_1 + i_2 + 1}) = \nu^{(n_3 - 1)/2}$ or $\nu^{n_3  -  1/2}$. But this will contradict with the fact that $\mathbf{e}(\Delta_{i_1 + i_2 })^{\vee} \cong \mathbf{b}(\Delta_1)$.
	
	Our next step is to show that $i_1 = 0$ or $i_2 = 0$. Assume on the contrary that $i_1 > 0$ and $i_2 > 0$. By our assumption on $\pi$, the representation $\pi^{\vee} = \langlands(\frm^{\vee})$ is $(H_{p,q},\mu_{(q-p)/2})$-distinguished. Thus, the representation
	\begin{align*}
		\Delta_{i_1 + i_2}^{\vee}  \times \cdots \times \Delta_{i_1 + 1}^{\vee} \times \Delta_{i_1}^{\vee} \times \cdots \times \Delta_1^{\vee}
	\end{align*}
	is $(H_{p,q},\mu_{(q-p)/2})$-distinguished. By Proposition \ref{prop::Geometric Lemma-Main}, we deduce that $\mathbf{b}(\Delta_1^{\vee}) \cong \mathbf{e}(\Delta_1)^{\vee}$ is the character $\nu^{q-p + 1/2}$ or $\nu^{p-q + 1/2}$ of $G_1$. (This is the consequence of Case A1; Case A2 and Case B2 are eliminated by arguments similar to those in Lemma \ref{lem::ladder distinction-same length}; Case B1 and Case C are eliminated by our assumptions.) It follows easily from the condition $\mathbf{b}(\Delta_1) \cong \mathbf{e}(\Delta_{i_1  + i_2})^{\vee}$ and the assumption $p \leqslant q$ that $\mathbf{e}(\Delta_1)^{\vee} = \nu^{p -q + 1/2}$. Hence we have $\mathbf{e}(\Delta_1)  =  \nu^{q - p - 1/2}$.
	
	We show that $i_1 = q - p$ and $\mathbf{e}(\Delta_{i_1})  =  \nu^{1/2}$ by consideration on the central character of $\pi$. In fact, on the one hand, we see from the assumption on $\frm$ and the fact  $\mathbf{e}(\Delta_1) = \nu^{ q - p -1/2}$ that the central character $w_{\pi}$ of $\pi$ is $\nu^a$ where $a =(q-p)i_1 - i_1^2/2$; on the othe hand, as $\pi$ is $(H_{p,q},\mu_{(p-q)/2})$-distinguished, we have $w_{\pi} = \nu^{a'}$ where $a' = (q-p)^2 /2 $. Thus the assertion follows. Also, from the fact that $\mathbf{e}(\Delta_{i_1 + i_2 })^{\vee} \cong \mathbf{b}(\Delta_1)$, we get that $\mathbf{b}(\Delta_1) = \nu^{i_2 + 1/2}$. Thus, $l(\Delta_1) = q - p -i_2 >2$, in particular $i_1 > i_2$.
	
	Now, as in the first step, we have that $\pi_1 \times \pi_2$ is $(H_{p,q},\mu_{(p-q)/2})$-distinguished, where $\pi_1 = \langlands(\Delta_1,\cdots,\Delta_{i_1})$ and $\pi_2 = \langlands(\Delta_{i_1 + 1}, \cdots, \Delta_{i_1  + i_2})$. We appeal to Proposition \ref{prop::ladder-products of left aligned}, and claim that Case A and Case B cannot happen. In fact, if Case A or Case B happens, there will be a division of $\pi_2$ as $\pi_2'$ and $\pi_2''$, where $\pi_2'$ is not the trivial representation of $G_0$, such that $\pi_2'$ is $(H_{r,s},\mu_{(r-s)/2})$-distinguished for two nonnegative integers $r$ and $s$. In particular, the central character $w_{\pi_2'}$ of $\pi_2'$ has nonnegative real part. But this will contradict with the fact that $\mathbf{e}(\Delta_{i_1 + 1}) = \nu^{-3/2}$. So, there exists a division of $\pi_1$ as two ladder representations $\pi_1'$ and $\pi_1''$ such that $\pi_2 \cong \pi_1'^{\vee}$ and that $\pi_1''$ is $(H_{p-n_2,q-n_2},\mu_{(p-q)/2})$-distinguished. Note that $\pi_1''$ is a right aligned representation, and is not a one dimensional representation due to the fact that $i_1 > i_2$. By Lemma \ref{lem::distinguished right aligned--shape}, we then get a contradiction as we can check easily that the ladder $\frm_1''$ of $\pi_1''$ is not of the form \eqref{formula::distinguished right aligned--1} or \eqref{formula::distinguished right aligned--2}.
\end{proof}



\section{Distinction in the Unitary Dual}\label{section::Classification-Unitary Dual}

\subsection{The case of Speh representations}\label{section::Speh case}

We now classify distinguished Speh representations in terms of distinguished discrete series. In fact, we will do it for essentially Speh representations. 

\begin{thm}\label{thm::proof+classification+Speh}
	Let $n = 2m$, and $\speh(\Delta,k)$ be an essentially Speh representation of $G_n$, where $\Delta$ is an essentially square-integrable representation of $G_d$ with $d > 1$, and $k$ is a positive integer. Then $\speh(\Delta,k)$ is $H_{m,m}$-distinguished if and only if $d$ is even and $\Delta$ is $H_{d/2,d/2}$-distinguished.
\end{thm}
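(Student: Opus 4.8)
The plan is to prove the two implications separately: the ``only if'' direction follows at once from what precedes, while the ``if'' direction carries the real content. Write $m = n/2 = dk/2$, which is a positive integer since $d > 1$. For the ``only if'' direction, suppose $\speh(\Delta,k)$ is $H_{m,m}$-distinguished, hence $(H_{m,m},\mu_0)$-distinguished; I would simply apply Proposition~\ref{prop::Speh--Nece+MaxiLevi} with $l = k$, $p = q = m > 0$ and $a = 0$ (legitimate since $d > 1$), which gives at once that $d$ is even and that $\Delta$ is $H_{d/2,d/2}$-distinguished. Nothing more is needed.

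For the ``if'' direction, assume $d$ is even and $\Delta$ is $H_{d/2,d/2}$-distinguished; by Proposition~\ref{prop::starting point} this makes $\Delta$ self-dual. Set $\frm = \{\nu^{(k-1)/2}\Delta, \dots, \nu^{-(k-1)/2}\Delta\}$, so that $\speh(\Delta,k) = \langlands(\frm)$, and consider the short exact sequence $0 \to \mathcal K \to \lambda(\frm) \to \speh(\Delta,k) \to 0$, where by Proposition~\ref{prop::ladder--kernel description} one has $\mathcal K = \sum_{i=1}^{k-1} \mathcal K_i$. Since $\Hom_{H_{m,m}}(-,\mathbf 1)$ is left-exact and contravariant, it suffices to prove (i) that $\lambda(\frm)$ is $(H_{m,m},\mathbf 1)$-distinguished and (ii) that $\mathcal K$ is not; these together yield $\Hom_{H_{m,m}}(\speh(\Delta,k),\mathbf 1) \cong \Hom_{H_{m,m}}(\lambda(\frm),\mathbf 1) \neq 0$. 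For (i) I would appeal to Blanc and Delorme \cite{Blanc-Delorme}: the segments of $\frm$ occur in dual pairs $\{\nu^c\Delta, \nu^{-c}\Delta\}$, together with the self-dual middle term $\Delta$ (itself $(H_{d/2,d/2},\mathbf 1)$-distinguished by hypothesis) when $k$ is odd, which is precisely the configuration in which they produce a nonzero linear period on a standard module; alternatively one can assemble it from Lemma~\ref{lem::preservation-parabolic induction} together with the fact — a consequence of Corollaries~\ref{cor::Distinction--maximal parabolic} and~\ref{cor::distinction--pure tensor} (the orbit with $(r,s) = (0,0)$), as in \cite[Proposition~7.1]{Offen-ParabolicInduction-JNT} — that $\tau \times \tau^{\vee}$ is $(H_{d,d},\mathbf 1)$-distinguished for every $\tau \in \textup{Irr}(G_d)$.

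The heart of the argument is (ii), for which I would use the explicit description of $\mathcal K$ from Tadi\'c \cite{Tadic-95-Characters+UnitaryDual} and Lapid--Minguez \cite{Lapid-Minguez--Ladder}. Because $\bigoplus_i \mathcal K_i$ surjects onto $\mathcal K$, it is enough to show $\Hom_{H_{m,m}}(\mathcal K_i,\mathbf 1) = 0$ for each $i$; and each $\mathcal K_i$ is an explicit product of essentially square-integrable representations, all of whose factors are twists $\nu^{x_j}\Delta$ of length $l(\Delta)$ except for two — the ``union'' segment of length $l(\Delta)+1$ and the ``intersection'' segment of length $l(\Delta)-1$, both still supported on the cuspidal line of $\rho$. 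I would then apply Proposition~\ref{prop::Geometric Lemma-Main} to $\mathcal K_i$ with $p = q = m$, $a = 0$, and eliminate every case: Case~B1 cannot occur since the relevant final segment has degree $d > 1$; Case~B2 cannot occur because it would force a nontrivial twist of $\Delta$, or one of the two modified segments, to be simultaneously self-dual and $H$-distinguished — contradicting the self-duality of $\Delta$ in the first instance, and in the second contradicting that $\Delta$ is $H_{d/2,d/2}$-distinguished via the parity constraint for distinguished essentially square-integrable representations on the cuspidal line of $\rho$ (cf.\ \cite{Matringe-Linear+Shalika-JNT}), since the modified segments have length of parity opposite to that of $l(\Delta)$; Case~A1 is ruled out using the self-duality of $\Delta$ and $d > 1$; and Cases~A2 and~C are ruled out by combining the duality relation they provide with Lemma~\ref{lem::Whittaker type-uniqueness} and Proposition~\ref{prop::Geometric Lemma--Products Products} to relocate that relation among the factors, again reaching a contradiction with the distinguishedness of $\Delta$ through the same parity-and-length bookkeeping, recursing on $k$ whenever a smaller product of essentially square-integrable representations is forced to be distinguished. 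This yields $\Hom_{H_{m,m}}(\mathcal K_i,\mathbf 1) = 0$ for all $i$, hence $\Hom_{H_{m,m}}(\mathcal K,\mathbf 1) = 0$.

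The step I expect to be the main obstacle is this case analysis in (ii), and specifically the elimination of Cases~A2 and~B2 of Proposition~\ref{prop::Geometric Lemma-Main}: the two modified segments are genuine twists of neither $\Delta$ nor each other, the irreducible constituents of $\mathcal K_i$ can themselves be self-dual, so the self-duality automatic from Jacquet--Rallis \cite{Jacquet-Rallis-LinearPeriods} is not by itself enough, and one must keep careful track of which essentially square-integrable factor is forced to be distinguished — of its length modulo $2$ and of its position on the cuspidal line of $\rho$ — in order to contradict the distinguishedness of $\Delta$.
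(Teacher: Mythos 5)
Your overall architecture agrees with the paper: the ``only if'' direction is Proposition~\ref{prop::Speh--Nece+MaxiLevi} verbatim, and for ``if'' you use the Blanc--Delorme distinguishedness of the standard module together with Proposition~\ref{prop::ladder--kernel description} and reduce to showing each $\mathcal K_i$ is not $H_{m,m}$-distinguished. The reduction via left-exactness of $\Hom_{H}(-,\mathbf 1)$ and the surjection $\bigoplus_i\mathcal K_i\twoheadrightarrow \mathcal K$ is sound, and you correctly spot that the decisive input for the hard step is the parity dichotomy between symmetric and exterior square $L$-factors on the cuspidal line of $\rho$, which comes from \cite[Theorem~6.1]{Matringe-Linear+Shalika-JNT} together with the factorization $L(s,\phi(\rho)\otimes\phi(\rho))=L(s,\Lambda^2\phi(\rho))L(s,\textup{Sym}^2\phi(\rho))$. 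So the key idea is in your hands.

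Where the proposal falls short of a proof is the case analysis for $\mathcal K_i$. You treat all $i$ uniformly, but the paper separates two qualitatively different situations. For $i\neq k/2$ (with the two cases $i<k/2$ and $i>k/2$ symmetric), the mechanism is \emph{not} that Case~C is ruled out: Case~C is exactly what happens, repeatedly, as you peel off the $(j,k+1-j)$ pairs $\Delta_j\cong\Delta_{k+1-j}^\vee$ from the outside. The contradiction appears only after the peeling reaches the asymmetric core $\Delta([a_{i+1},b_i])\times\Delta([a_i,b_{i+1}])\times\cdots\times\Delta([a_{k+1-i},b_{k+1-i}])$: one more application again forces Case~C, but the only available duality partner for $e(\Delta([a_{k+1-i},b_{k+1-i}]))^\vee$ would have to be $b(\Delta([a_i,b_{i+1}]))$, and $l(\Delta([a_i,b_{i+1}]))=l(\Delta)-1<l(\Delta)$ makes this impossible. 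No $L$-factor argument is needed here. The $L$-factor parity is needed precisely and only for the remaining case $k$ even, $i=k/2$, where peeling reduces to $\steinberg(\rho,l+1)\times\steinberg(\rho,l-1)$ and Proposition~\ref{prop::Geometric Lemma--Products Products} forces both $\steinberg(\rho,l+1)$ and $\steinberg(\rho,l-1)$ to be $H$-distinguished, contradicting the fact that on any self-dual cuspidal line exactly one parity of length is $H$-distinguished. Your write-up asserts that ``Cases A2 and C are ruled out'' and invokes Lemma~\ref{lem::Whittaker type-uniqueness} (which is really an ingredient in the proof of Proposition~\ref{prop::Geometric Lemma-Main} itself, not a tool for eliminating its conclusions) to ``relocate the duality relation''; this conflates the two regimes and would not compile into a correct argument without being reorganized along the split $i\neq k/2$ versus $i=k/2$.
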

\begin{proof}
	One direction has been proved in Proposition \ref{prop::Speh--Nece+MaxiLevi}. We now assume that $d$ is even and that $\Delta$ is $H_{d/2,d/2}$-distinguished. By \cite[Proposition 7.2]{Offen-ParabolicInduction-JNT}, which is based on the work of Blanc and Delorme \cite{Blanc-Delorme}, the representation
	\begin{align}\label{formula::product representation}
		\nu^{(k-1)/2}\Delta \times \nu^{(k-3)/2} \Delta \times \cdots \times \nu^{(1-k)/2}\Delta
	\end{align}
	is $H_{m,m}$-distinguished. (The distinguishedness of $\Delta$ is unnecessary when $k$ is even).We have the following exact sequence of representations of $G_n$,
	\begin{align}
		0 \ra \mathcal{K} \ra \nu^{(k-1)/2}\Delta \times \nu^{(k-3)/2} \Delta \times \cdots \times \nu^{(1-k)/2}\Delta  \ra \speh(\Delta,k) \ra 0,
	\end{align}
	where the kernel $\mathcal{K} = \sum_{i=1}^{k-1} \mathcal{K}_i$ is given explicitly in Proposition \ref{prop::ladder--kernel description}. To show that $\speh(\Delta,k)$ is $H_{m,m}$-distinguished, it suffices to show that each $\mathcal{K}_i$ is not $H_{m,m}$-distinguished. Write the representation \eqref{formula::product representation} as $\Delta([a_1,b_1]_{\rho}) \times \cdots \times \Delta([a_k,b_k]_{\rho})$, here the cuspidal representation $\rho$ is taken to be self-dual and thus $a_i$ and $b_i$, $i = 1, 2,\cdots,k$  need not be integers. So we have
	\begin{align*}
		& a_{i+1} = a_i - 1, \quad b_{i+1} = b_i - 1,\  i = 1, \cdots, k-1\\
		& a_i + b_{k+1-i} = 0,\  i = 1, \cdots, k.
	\end{align*}
	We further omit the subscript $\rho$ in the sequel. Recall that, by Proposition \ref{prop::ladder--kernel description},
	\begin{align*}
		\mathcal{K}_i  =  \Delta([a_1,b_1])  \times \cdots \times \Delta([a_{i+1},b_i]) \times \Delta([a_i,b_{i+1}]) \times \cdots \times \Delta([a_k,b_k]).   
	\end{align*}
	
	If $i+1 \leqslant (k+1)/2$ and $\mathcal{K}_i$ is $H_{m,m}$-distinguished, by applying Proposition \ref{prop::Geometric Lemma-Main} repeatly, we get that $\Delta([a_{i+1},b_i]) \times \Delta([a_i,b_{i+1}]) \times \cdots \times \Delta([a_{k+1-i},b_{k+1-i}])$ is $H_{m',m'}$-distinguished for certain $m'$. (In each step, only Case C is possible.) When we apply Proposition \ref{prop::Geometric Lemma-Main} once again, still, only Case C is possible. But this is absurd as $l(\Delta([a_i,b_{i+1}])) < l(\Delta)$. Similar arguments can show that $\mathcal{K}_i$ is not $H_{m,m}$-distinguished if $i \geqslant (k+1)/2$. 
	
	The remaining case is when $k$ is even and $i = k/2$. In what follows, to save notation, we sometimes write $H$-distinguished for $H_{m',m'}$-distinguished when there is no need to address $m'$. If $\mathcal{K}_i$ is $H_{m,m}$-distinguished, by applying Proposition \ref{prop::Geometric Lemma-Main} repeatly, we get that $\Delta([a_{i+1},b_i]) \times \Delta([a_i,b_{i+1}])$ is $H$-distinguished.  This in turn implies that both $\Delta([a_{i+1},b_i])$ and $\Delta([a_i,b_{i+1}])$ are $H$-distinguished by Proposition \ref{prop::Geometric Lemma--Products Products}. Let us write $\Delta = \steinberg(\rho,l)$. Then by our assumption on $i$, we have $\Delta([a_i,b_{i+1}]) = \steinberg(\rho,l-1)$ and $\Delta([a_{i+1},b_i]) = \steinberg(\rho,l+1)$. By \cite[Theorem 6.1]{Matringe-Linear+Shalika-JNT}, we can conclude that $\steinberg(\rho,l)$ is $H$-distinguished if and only if $\steinberg(\rho,l-1)$ (or $\steinberg(\rho,l+1)$) is not $H$-distinguished. Actually, as $\rho$ is self-dual, the $L$-function $L(s,\phi(\rho)\otimes \phi(\rho))$ has a simple pole at $s = 0$, where $\phi(\rho)$ is the Langlands parameter of $\rho$. By the factorization
	\begin{align*}
		L(s,\phi(\rho)\otimes \phi(\rho))  =  L(s,\Lambda^2 \circ \phi(\rho)) \cdot L(s,\textup{Sym}^2 \circ \phi(\rho)),
	\end{align*}
	we know that exactly one of the symmetric or exterior square $L$-factors of $\rho$ has a pole at $s=0$. The above conclusion then follows from \cite[Theorem 6.1]{Matringe-Linear+Shalika-JNT} where distinction of $\steinberg(\rho,l)$ is related to the pole of symmetric or exterior square $L$-facotrs of $\rho$ according to $l$ is even or odd. Thus by our assumption that $\Delta$ is $H_{d/2,d/2}$-distingusihed, we get that $\mathcal{K}_i$ is not $H_{m,m}$-distinguished. So we are done.
\end{proof}

\subsection{The general case}\label{section::general case}

We start with an auxiliary result, which is needed in one step of the proof of Theorem \ref{thm::proof+classification+UnitaryDual}. 

\begin{lem}\label{lem::cancellation result}
	Let $\pi = \pi_1 \times \cdots \times \pi_t$ be an irreducible unitary representation of $G_{2m}$ with each $\pi_i$ a Speh representation. Let $h$ be a positive integer. Assume that, for all of those $\pi_i$ such that $\supp(\pi_i)$ is contained in the cuspidal line $\BZ\nu^{-1/2}$, we have $\mathbf{b}(\pi_i) \leqslant \nu^{h-1/2}$. If the representation $\pi \times \nu^{-h/2}$ is $(H_{m,m+h},\mu_{h/2})$-distinguished, where $\nu^{-h/2}$ is viewed as a representation of $G_h$, then $\pi$ is $H_{m,m}$-distinguished.
\end{lem}
\begin{proof}
	A crucial fact, on which we rely, is that $\pi$ is a commutative product of Speh representations.
	Our first step is to show that we can reduce the proposition to the case that for all $i$,
	\begin{align}\label{formula::cancellation}
		\text{  the support $\supp(\pi_i)$ is contained in $\BZ \nu^{-1/2}$ and $\mathbf{b}(\pi_i) = \nu^{h-1/2}$.  }
	\end{align} 
	Indeed, write $\Pi = \pi_1 \times \cdots \times \pi_r$ and $\Pi'  =  \pi_{r+1} \times \cdots \times \pi_t \times \nu^{-h/2}$ where, $\pi_j$'s, $r+1 \leqslant j \leqslant t$, are all the representations in the Tadi\'{c} decomposition of $\pi$ that satisfy \eqref{formula::cancellation}. So $\Pi \times \Pi'$ is $(H_{m,m+h},\mu_{h/2})$-distinguished. By Proposition \ref{prop::Geometric Lemma--Products Products}, $\Pi$ is $(H_{m_1,m_1 + h_1},\mu_{h_1/2})$-distinguished and $\Pi'$ is $(H_{m-m_1,m-m_1+h-h_1},\mu_{(h+h_1)/2})$-distinguished for certain integers $m_1$ and $h_1$. Note that the central character of $\Pi$ has real part $0$. We have $h_1 = 0$. So, $\Pi$ is $H_{m_1,m_1}$-distinguished and $\Pi'$ is $(H_{m-m_1,m-m_1+h},\mu_{h/2})$-distinguished. The reduction then follows from Lemma \ref{lem::preservation-parabolic induction}.
	
	 We thus assume that $\pi = \pi_1 \times \cdots \times \pi_t$ with $\mathbf{b}(\pi_i) = \nu^{h-1/2}$ for all $i$. Moreover, we arrange the ordering of $\pi_i$'s such that $\mathbf{ht}(\pi_1) \geqslant \cdots \geqslant \mathbf{ht}(\pi_t)$. We prove the lemma by induction on $t$.
	
	As the representation $\pi \times \nu^{-h/2}$ is $(H_{m,m+h},\mu_{h/2})$-distinguished, by Proposition \ref{prop::ladder-products of left aligned}, there exist two representations $\sigma'$ and $\sigma''$ of dimension one, $\nu^{-h/2} = \sigma' \sqcup \sigma''$, such that, among other things, $\sigma'$ is $(H_{a,b},\mu_{h+(a-b)/2})$-distinguished for two nonnegative integers $a$ and $b$.
	
	(1). If $\sigma'$ is not the trivial representation of $G_0$, that is, $a$ and $b$ are not all zero, we have three cases. If $a > 0$ and $b > 0$, then by Lemma \ref{lem::one dimensional--distinction}, $\sigma'$ must be the trivial representation $\mathbf{1}$ of $G_{a+b}$. This is absurd as we have $\textbf{b}(\sigma') = \nu^{ - 1/2}$; If $a > 0$ and $b = 0$, then $\sigma'$ is the character $\nu^{h+a/2}$ of $G_a$. Thus $\textbf{b}(\sigma') = \nu^{h+a -1/2}$ which is absurd; If $a =0$ and $b>0$, we see easily that $a = 0$ and $b= h$, that is, $\sigma'$ is the character $\nu^{-h/2}$ of $G_h$. So, it follows from Case B of Proposition \ref{prop::ladder-products of left aligned} that $\pi$ is $H_{m,m}$-distinguished.
	
	(2). If $\sigma'$ is the trivial representaion of $G_0$, then we are in Case C of Proposition \ref{prop::ladder-products of left aligned}.Hence there exists $i$, $1 \leqslant i \leqslant t$ and a division of $\pi_i$ as two ladder representations $\pi_i'$ and $\pi_i''$, $\pi_i = \pi_i' \sqcup \pi_i''$, such that $\pi_i'$ is the character $\nu^{h/2}$ of $G_h$ and the representation
	\begin{align}\label{formula::auxiliary in general case--product}
		\pi_1 \times \cdots \times \pi_{i-1} \times \pi_i'' \times \pi_{i+1} \times \cdots \times \pi_t
	\end{align}
	is $(H_{m-h,m},\mu_{h/2})$-distinguished. We have two subcases. If $\pi_i$ is one dimensional, then $\pi_i$ must be the trivial representaion $\mathbf{1}$ of $G_{2h}$ as $\textbf{b}(\pi_i) = \nu^{h-1/2}$. Thus $\pi_i''$ is the character $\nu^{-h/2}$ of $G_h$. By Lemma \ref{lem::ladder--Commutativity},  $\nu^{-h/2} \times \pi_j = \pi_j \times \nu^{-h/2}$ for $j = 1,\cdots,t$. So we move $\pi_i''$ to the end of the product \eqref{formula::auxiliary in general case--product} and get by induction hypothesis that 
	\begin{align*}
	    \pi_1 \times \cdots \times \pi_{i-1} \times \pi_{i+1} \times \cdots \times \pi_k
	\end{align*}
	is $H_{m-h,m-h}$-distinguished. Hence $\pi$ is $H_{m,m}$-distingusihed by Lemma \ref{lem::preservation-parabolic induction}. If otherwise $\pi_i$ is not one dimensional, we can also move $\pi_i''$ to the beginning of the product \eqref{formula::auxiliary in general case--product} by Lemma \ref{lem::ladder--Commutativity} and our ordering of $\pi_i$'s. By part (2) of Lemma \ref{lem::symmetry},
	\begin{align}\label{formula::auxiliary in general case--representation}
		 \pi_1 \times \cdots \times \pi_{i-1} \times \pi_{i+1}   \times \cdots \times \pi_t  \times (\pi_i'')^{\vee} 
	\end{align}
	is $(H_{m,m-h},\mu_{-h/2})$-distinguished. This is impossible by Proposition \ref{prop::Geometric Lemma--Products Products}, and then we are done. Indeed, firstly, we can check easily that $\pi_i''$, hence its contragradient $(\pi_i'')^{\vee}$, cannot be $(H_{p_1,q_1},\mu_{a_1})$-distinguished for any $(p_1,q_1,a_1)$ by Lemma \ref{lem::distinguished right aligned--shape}. Secondly, note that $\mathbf{e}((\pi_i'')^{\vee}) = \nu^{-h-1/2}  $ is not dual to $\mathbf{b}(\pi_i) = \nu^{h-1/2}$ for all $i$.
\end{proof}

\begin{thm}\label{thm::proof+classification+UnitaryDual}
	Let $\pi $ be an irreducible unitary representation of $G_{2m}$ of Arthur type. Then $\pi$ is $H_{m,m}$-distinguished if and only if $\pi$ is of the form
	\begin{align}\label{formula::standard form-distinguished unitary}
		(\sigma_1 \times \sigma_1^{\vee})  \times \cdots \times (\sigma_r \times \sigma_r^{\vee})  \times \sigma_{r+1} \times \cdots \times \sigma_s.  
	\end{align}
	where each $\sigma_i$ is a Speh representation for $i = 1, \cdots, r$, and each representation $\sigma_j$ is $H_{m_j,m_j}$-distinguished for some positive integer $m_j$, $j =  r+1 , \cdots ,s$.
\end{thm}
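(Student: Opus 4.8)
Suppose $\pi$ has the form \eqref{formula::standard form-distinguished unitary}. For each $i\leqslant r$, with $\sigma_i$ a Speh representation of $G_{m_i}$, the representation $\sigma_i\times\sigma_i^{\vee}$ of $G_{2m_i}$ is $H_{m_i,m_i}$-distinguished: a linear period on $\sigma_i\times\sigma_i^{\vee}$ is produced by the open $G_{m_i}\times G_{m_i}$-orbit on $H_{m_i,m_i}\backslash G_{2m_i}$, whose stabilizer is a diagonally embedded $G_{m_i}$, and this can be read off from \cite[Proposition 7.1]{Offen-ParabolicInduction-JNT} combined with Corollary \ref{cor::distinction--pure tensor} applied to the orbit datum with $r=s=0$, for which the only requirement is $\rho_2\cong\rho_3^{\vee}$. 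Each $\sigma_j$ with $j>r$ is $H_{m_j,m_j}$-distinguished by hypothesis. Since all these factors have even degree and are distinguished for $\mu_0=\mathbf 1$, iterating Lemma \ref{lem::preservation-parabolic induction} with $a=0$ shows that $\pi$ is $H_{m,m}$-distinguished.

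\textbf{Necessity: set-up.} Assume $\pi$ is $H_{m,m}$-distinguished. By the result of Jacquet and Rallis \cite{Jacquet-Rallis-LinearPeriods}, $\pi$ is self-dual, so its Tadi\'c decomposition $\pi=\pi_1\times\cdots\times\pi_t$ into Speh representations is stable, as a multiset, under $(\cdot)^{\vee}$. I would argue by induction on $t$, the cases $t\leqslant 1$ being immediate from Theorem \ref{thm::Speh + distinguishe = max Levi G_m,m} and Lemma \ref{lem::one dimensional--distinction}. For the inductive step, using the commutativity of ladder representations (Lemma \ref{lem::ladder--Commutativity}) I would reorder the product so that $\boldb(\pi_t)$ is maximal among $\boldb(\pi_1),\dots,\boldb(\pi_t)$, relative to a fixed total order on $\bigsqcup_{\rho}\BZ\rho$ refining the order on each cuspidal line, and then apply Proposition \ref{prop::ladder-products of left aligned} to $\pi_1\times\cdots\times\pi_t$.

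\textbf{Necessity: the three cases.} In Case B of Proposition \ref{prop::ladder-products of left aligned}, $\pi_t$ is $(H_{r,s},\mu_{(r-s)/2})$-distinguished with $r+s=\deg\pi_t$; Theorem \ref{thm::Speh + distinguishe = max Levi G_m,m} (when $\pi_t$ is not one dimensional) or Lemma \ref{lem::one dimensional--distinction} (when it is, which forces $\pi_t=\mathbf 1_{G_{2r}}$) gives $r=s$, so $\pi_t$ is $H_{r,r}$-distinguished, while $\pi_1\times\cdots\times\pi_{t-1}$, being a commutative product of Speh representations, is unitary and, once a central-character computation rules out the non-balanced case, is $H_{m-r,m-r}$-distinguished; the induction hypothesis applied to the latter places $\pi$ in the required shape with $\pi_t$ adjoined as one of the self-dual distinguished factors. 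In Cases A and C, Proposition \ref{prop::ladder-products of left aligned} produces a factor $\pi_{i_0}$, a division $\pi_{i_0}=\pi_{i_0}'\sqcup\pi_{i_0}''$, and a contragredient relation between $\pi_{i_0}'$ and $\pi_t$ (Case C) or the lower part $\pi_t''$ of a division of $\pi_t$ (Case A). Here I would exploit the maximality of $\boldb(\pi_t)$, comparing the beginnings and ends of all the segments entering the divisions, using Theorem \ref{thm::left aligned + distinguish = Speh} to recognize the relevant left aligned ladders (in particular $\pi_t'$ and $\pi_{i_0}'$) as essentially Speh and Proposition \ref{prop::ladder-Speh--selfdual} to pin down their self-duality, to show that these configurations collapse to a clean pairing: Case A cannot occur, and in Case C the division of $\pi_{i_0}$ is trivial, so $\pi_t\cong\pi_{i_0}^{\vee}$ and $\pi_1\times\cdots\times\widehat{\pi_{i_0}}\times\cdots\times\pi_{t-1}$ is $(H_{r,s},\mu_{(s-r)/2})$-distinguished with $r+s=m-\deg\pi_t$. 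This remaining product is again a commutative product of Speh representations, hence unitary, hence $H_{r,r}$-distinguished once the central-character computation rules out $r\ne s$, so the induction hypothesis applies; adjoining the pair $\pi_{i_0}\times\pi_{i_0}^{\vee}$ — which is of the allowed type $\sigma\times\sigma^{\vee}$ regardless of whether $\pi_{i_0}$ is self-dual — then puts $\pi$ in the required shape. The one configuration in which the division of $\pi_{i_0}$ resists being collapsed directly is when $\pi_{i_0}$ is one dimensional and leaves behind a character $\nu^{-h/2}$ of some $G_h$ sitting at the edge of the remaining product; this is precisely the situation Proposition \ref{prop::cancellation result} (the cancellation result) is built to absorb, recovering distinction of the remaining product.

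\textbf{The main obstacle.} The hard part will be the combinatorial case analysis in Cases A and C: proving that the maximality of $\boldb(\pi_t)$ forbids a genuinely proper division of $\pi_{i_0}$ — equivalently, that a distinguished factor we peel off must be matched with an honest Tadi\'c factor equal to its contragredient, not with a proper sub-ladder of another factor — while keeping track of which of the five sub-cases of Proposition \ref{prop::ladder-products of left aligned} can actually occur and on which cuspidal lines, and isolating the one-dimensional remnant cleanly enough for Proposition \ref{prop::cancellation result} to apply. This is of the same flavor as the bookkeeping of beginnings and ends in the proofs of Propositions \ref{prop::ladder distinction-same length}, \ref{prop::distinguished right aligned--shape} and \ref{prop::left aligned+Distinguish= Speh G1}, but now carried out over all the Tadi\'c factors at once.
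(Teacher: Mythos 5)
Your overall strategy (self-duality from Jacquet--Rallis, reorder the Tadi\'c factors, induct on the number of factors, apply Proposition \ref{prop::ladder-products of left aligned}, and handle the residual one-dimensional remnant with Proposition \ref{prop::cancellation result}) matches the paper's, and your treatment of Case B and of the sufficiency direction is essentially the paper's. The problem is your central claim that \emph{Case A cannot occur}. Proposition \ref{prop::ladder-products of left aligned} only asserts that \emph{some} division witnesses one of the three cases; it gives you no control over which one, and Case A is a genuine possibility --- for instance when $\pi_t$ and some $\pi_i$ are isomorphic self-dual Speh representations, a proper division $\pi_t = \pi_t' \sqcup \pi_t''$ with $\pi_t'$ a shorter self-dual Speh and $(\pi_t'')^{\vee}\cong\pi_i'$ can certainly arise as the witnessing orbit. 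The paper does not eliminate Case A; it confronts it. In subcase (1) of Case A (where $\pi_t'$ is not one-dimensional), it uses Theorems \ref{thm::left aligned + distinguish = Speh} and \ref{thm::Speh + distinguishe = max Levi G_m,m} to get $\pi_t'$ a self-dual Speh, deduces $\boldb(\pi_i)=\boldb(\pi_t)$, and then runs a careful height comparison --- $\height(\pi_t)=\height(\pi_t'')=\height(\pi_i')\leqslant\height(\pi_i)\leqslant\height(\pi_t)$ --- to force $\pi_i\cong\pi_t$ and $\pi_i''\cong\pi_t'$, after which the induction applies to the product with $\pi_i$ replaced by $\pi_i''$. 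That comparison depends crucially on an ordering finer than yours: the paper first groups factors by cuspidal line (and its contragredient), then within a group orders by $\boldb$ and, in case of ties, by height. Sorting only by $\boldb$ as you propose does not yield the inequality $\height(\pi_i)\leqslant\height(\pi_t)$ you need, and without it the argument that $\pi_i\cong\pi_t$ does not go through.

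A second, smaller misalignment: the scenario you single out at the end (a one-dimensional remnant $\nu^{-h/2}$ of $G_h$ that must be absorbed by Proposition \ref{prop::cancellation result}) is subcase (2) of Case A in the paper (where $\pi_t'$ is the character $\nu^{h/2}$ of $G_h$), not a variant of Case C. In Case C the paper's ordering forces $\pi_i'=\pi_i$, i.e.\ the division is automatically trivial and $\pi_t\cong\pi_i^{\vee}$, with no residue to cancel. So the role of the cancellation lemma in your write-up is attached to the wrong branch of the case analysis, and the branch that actually needs all the work --- Case A --- is dismissed by an unproved (and false) claim. To fix the proof you should (i) adopt the paper's two-tier ordering, (ii) drop the claim that Case A is impossible and instead run the height argument to show $\pi_i\cong\pi_t$, and (iii) move the appeal to Proposition \ref{prop::cancellation result} into the one-dimensional subcase of Case A.
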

\begin{proof}
	By the work of Blanc and Delorme \cite{Blanc-Delorme}, we know that $\sigma_j \times \sigma_j^{\vee}$ is $H_{m_j,m_j}$-distinguished with $m_j$ the degree of $\sigma_j$, $j = 1,\cdots,r$. One direction then follows from Lemma \ref{lem::preservation-parabolic induction}. Write $\pi = \pi_1 \times \cdots \times \pi_t$ to be the Tadi\'{c} decomposition of $\pi$. We prove the other direction by induction on $t$. The case $t = 1$ is obvious. In general, as $\pi$ is a commutative product, we order these $\pi_i$ in the following way: We first group these $\pi_i$ by cuspidal supports. Namely, representations with cuspidal supports contained in the union of one cuspidal line and its contragredient are put in the same group. The ordering of the groups can be arbitrary. For representations within the same group, if their cuspidal supports are contained in one cuspidal line, we arrange the ordering such that when $i < j$, we have either $\mathbf{b}(\pi_i) < \mathbf{b}(\pi_j)$, or $\mathbf{b}(\pi_i) = \mathbf{b}(\pi_j)$ and $\mathbf{ht}(\pi_i) \leqslant \mathbf{ht}(\pi_j)$; if their cuspidal supports are contained in two different cuspidal lines, we arrange the ordering such that when $i < j$, we have $\mathbf{ht}(\pi_i) \leqslant \mathbf{ht}(\pi_j)$.
	
	By our assumption, $\pi$ is $H_{m,m}$-distinguished. We apply Propositon \ref{prop::ladder-products of left aligned} and discuss case by case.
	
	Case A. There exists a division of $\pi_t$, $\pi_t = \pi_t' \sqcup \pi_t''$, where $\pi_t'$ is neither $\pi_t$ nor the trivial representation of $G_0$, such that, among other things, $\pi_t'$ is $(H_{r,s},\mu_{(r-s)/2})$-distinguished for two nonnegative integers $r$ and $s$. We have two subcases.
	\begin{itemize}
		\item[(1)]   The representation $\pi_t'$ is \emph{not one dimensional}. By Proposition \ref{prop::left aligned + distinguish = Speh}, we know $\pi_t'$ is an essentially Speh representation. So, by Corollary \ref{cor::Speh + distinguishe = max Levi G_m,m}, we have $r = s$. That is, $\pi_t'$ is $H_{r,r}$-distinguished. In particular, $\pi_t'$ is self-dual, and hence $\pi_t$ is self-dual. This further shows that $\pi_t'$ is a Speh representation. By Proposition \ref{prop::ladder-products of left aligned}, there exists $i$, $1 \leqslant i \leqslant t-1$, and a division of $\pi_i$, $\pi_i = \pi_i' \sqcup \pi_i''$ such that $(\pi_t'')^{\vee} \cong \pi_i'$ and that
		\begin{align}\label{formula::Classification-Case A-representation}
			\pi_1 \times \cdots \times \pi_{i-1} \times \pi_i'' \times \pi_{i+1} \times \cdots \times \pi_{t-1}
		\end{align}
		is $H_{m',m'}$-distinguished for some positive integer $m'$. Thus we have $\textbf{b}(\pi_t) = \textbf{b}(\pi_i)$. By our assumption on the ordering of representations, we have $\mathbf{ht}(\pi_i) \leqslant \mathbf{ht}(\pi_t)$. As $\pi_t'$ is a self-dual Speh representation that does not equal to $\pi_t$, we have $\mathbf{ht}(\pi_t'')  = \mathbf{ht}(\pi_t)$. As $\mathbf{ht}(\pi_i') \leqslant \mathbf{ht}(\pi_i)$, we have $\mathbf{ht}(\pi_t) = \mathbf{ht}(\pi_i)$ due to  the fact that $(\pi_t'')^{\vee} \cong \pi_i'$. Thus we have $\pi_i \cong \pi_t$ and $\pi_i'' \cong \pi_t'$. Recall that $\pi_t'$ is a $H_{r,r}$-distinguished Speh representation. So, by induction hypothesis, the representation \eqref{formula::Classification-Case A-representation} is of the form \eqref{formula::standard form-distinguished unitary}. After removing $\pi_i''$ in the product, we still get a representation of the form \eqref{formula::standard form-distinguished unitary}. Therefore, by adding $\pi_t \times\pi_i$, we get that $\pi$ is of the form \eqref{formula::standard form-distinguished unitary}.
		\item[(2)]  The representation $\pi_t'$ is \emph{one dimensional}. If $r > 0$ and $s > 0$, then $\pi_t'$ is the trivial representation $\mathbf{1}$ of $G_{2r}$ by Lemma \ref{lem::one dimensional--distinction}. Note that, in this case, $\pi_t$ is not a one dimensional representaion. Then by the same arguments as in Case A (1), we are done in this case. If one of $r$, $s$ is $0$, then $\pi_t'$ is the character $\nu^{h/2}$ of $G_h$, $h = \max\{r,s\}$. Thus we have $\textbf{b}(\pi_t) = \mathbf{b}(\pi_t') = \nu^{h-1/2}$. In particular, $\pi_t$ is self-dual. By Proposition \ref{prop::ladder-products of left aligned}, there exists $i$, $1 \leqslant i \leqslant t-1$, and a division of $\pi_i$, $\pi_i = \pi_i' \sqcup \pi_i''$ such that $(\pi_t'')^{\vee} \cong \pi_i'$ and that
		\begin{align}\label{formula::Main Theorem--Case A (2)}
			\pi_1 \times \cdots \times \pi_{i-1} \times \pi_i'' \times \pi_{i+1} \times \cdots \times \pi_{t-1}
		\end{align}
		is $(H_{m-n_t,m-n_t +h},\mu_{h/2})$-distinguished with $n_t$ the degree of $\pi_t$. Thus we have $\mathbf{b}(\pi_i) = \mathbf{b}(\pi_t) = \nu^{h - 1/2}$, and $\pi_i$ is also self-dual. By our assumption on the ordering of representations, we have $\mathbf{ht}(\pi_i) = \mathbf{ht}(\pi_t)$, and hence $\pi_i \cong \pi_t$. Thus, the representation $\pi_i''$ is the character $\nu^{-h/2}$ of $G_h$. By Lemma \ref{lem::ladder--Commutativity}, the representation \eqref{formula::Main Theorem--Case A (2)} is isomorphic to the representation
		\begin{align*}
			\pi_1 \times \cdots \times \pi_{i-1}  \times \pi_{i+1} \times \cdots \times \pi_{t-1} \times \nu^{-h/2}.
		\end{align*}
		By Lemma \ref{lem::cancellation result}, the representation $\pi_1 \times \cdots \times \pi_{i-1}  \times \pi_{i+1} \times \cdots \times \pi_{t-1}$ is $H_{m-n_t,m-n_t}$-distinguished, and hence is of the form \eqref{formula::standard form-distinguished unitary} by induction hypothesis. Therefore, by adding $\pi_i \times \pi_t$, we get that $\pi$ is of the form \eqref{formula::standard form-distinguished unitary}.
	\end{itemize}

	Case B. In this case the representation $\pi_t$ is $(H_{r,s},\mu_{(r-s)/2})$-distinguished for two nonnegative integers $r$ and $s$, and the representation
	\begin{align*}
		\pi_1 \times \cdots \times \pi_{t-1}
	\end{align*}
	is $(H_{m - r,m-s},\mu_{(r-s)/2})$-distinguished. As $\pi_t$ is a Speh representation, by consideration of its central character, we have $r = s$. Therefore, by induction hypothesis we are done.
	
	Case C. There exists $i$, $1 \leqslant i \leqslant t-1$, and a division of $\pi_i$, $\pi_i = \pi_i' \sqcup \pi_i''$, such that $(\pi_t)^{\vee} \cong \pi_i'$ and that the representation 
	\begin{align*}
		\pi_1 \times \cdots \times \pi_{i-1}  \times  \pi_i'' \times \pi_{i+1} \times \cdots \times \pi_{t-1}
	\end{align*}
	is $H_{m-n_t,m-n_t}$-distinguished. By our assumption on the ordering of representations, we have $\pi_i \cong (\pi_t)^{\vee}$. Thus $\pi_i''$ is the trivial representation of $G_0$. By induction hypothesis, the representation $\pi_1 \times \cdots \times \pi_{i-1}  \times \pi_{i+1} \times \cdots \times \pi_{t-1}$ is of the form \eqref{formula::standard form-distinguished unitary}. Therefore, by adding $\pi_t \times (\pi_t)^{\vee}$, the representation $\pi$ is of the form \eqref{formula::standard form-distinguished unitary}.
\end{proof}

To classify distinguished representations in the entire unitary dual, it remains to consider distinction of complementary series representations. Recall that a complementary series representation is an irreducible unitary representation of the form $\nu^{\alpha}\speh(\delta,k) \times \nu^{-\alpha}\speh(\delta,k)$ with $ 0 < \alpha < 1/2$, and is denoted by $\speh(\delta,k)[\alpha,-\alpha]$. By the work of Blanc and Delorme \cite{Blanc-Delorme}, one sees that $\speh(\delta,k)[\alpha,-\alpha]$ is $H_{m,m}$-distinguished if and only if it is self-dual, where $m$ is the degree of $\speh(\delta,k)$. To apply the geometric lemma, we first note the following lemma.

\begin{lem}\label{lem::classification--entire unitary dual}
	Let $\rho$ be a unitary supercuspidal representation of $G_d$ and $c$ a fixed integer. Let $\pi$ be a ladder representation of $G_n$ with cuspidal supports contained in the cuspidal line $\BZ \nu^{\alpha+c/2}\rho$ or $\BZ \nu^{-\alpha+c/2}\rho$ with $0 < \alpha < 1/2$, then $\pi$ cannot be self-dual. If, moreover, $\pi$ is left aligned, then $\pi$ cannot be $(H_{p,q},\mu_{(p-q)/2})$-distinguished for certain nonnegative integers $p$, $q$ with $p+q=n$.
\end{lem}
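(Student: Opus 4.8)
The plan is to prove the two assertions in sequence: the first by a support argument using the unitarity of $\rho$, and the second by reducing to the structural results of Sections~\ref{section::ladder} and~\ref{section::Classification-Unitary Dual} and then invoking the first assertion. Throughout, recall that a ladder representation has all its cuspidal support on a single cuspidal line, which by hypothesis is $\BZ\shskip\tau$ with $\tau = \nu^{\varepsilon\alpha + c/2}\rho$ for one of the two signs $\varepsilon \in \{+1,-1\}$; thus every cuspidal representation in $\supp(\pi)$ has the form $\nu^j\tau$ with $j \in \BZ$, and $\supp(\pi)$ is nonempty since $\pi$ has degree at least $1$.

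For the first assertion, suppose for contradiction that $\pi \cong \pi^\vee$. Since $\supp(\pi^\vee)$ consists of the contragredients of the members of $\supp(\pi)$ (this is also visible from Lemma~\ref{lem::DualOfLadder}), we have $\supp(\pi^\vee) \subseteq \BZ\shskip\tau^\vee$. As two cuspidal lines are either equal or disjoint and $\supp(\pi)$ is nonempty, $\pi \cong \pi^\vee$ forces $\BZ\shskip\tau = \BZ\shskip\tau^\vee$, i.e. $\tau^\vee \cong \nu^k\tau$ for some $k \in \BZ$. Comparing the normalized absolute values of the central characters on the two sides, and using that $\rho$ (hence $\rho^\vee$) is unitary, one gets $-\varepsilon\alpha - c/2 = \varepsilon\alpha + c/2 + k$, so $2\varepsilon\alpha = -(c+k) \in \BZ$. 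This contradicts $0 < \alpha < 1/2$, and the first assertion follows.

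For the second assertion, assume moreover that $\pi$ is left aligned and $(H_{p,q},\mu_{(p-q)/2})$-distinguished for some nonnegative integers $p$, $q$ with $p+q=n$; we seek a contradiction. By Theorem~\ref{thm::left aligned + distinguish = Speh}, $\pi$ is an essentially Speh representation, say $\pi = \speh(\Delta,k)$ with $\Delta$ an essentially square-integrable representation of $G_{d'}$; since the cuspidal constituents of $\Delta$ lie on the cuspidal line of $\supp(\pi)$, whose members all have degree $d = \deg\rho$, we have $d \mid d'$. If $d' = 1$, then $d = 1$, $\rho$ is a character of $G_1$, and $\pi$ is a character of $G_n$; by Lemma~\ref{lem::one dimensional--distinction}, $\pi = \nu^t\circ\det$ with $t \in \{0, n/2\}$, so its supercuspidal support (the characters $\nu^{t + (n-1)/2}, \nu^{t+(n-3)/2},\dots,\nu^{t-(n-1)/2}$) consists of $\nu^s$ with $s \in \tfrac12\BZ$, while the support hypothesis puts each such $s$ in $\varepsilon\alpha + c/2 + \BZ$, forcing $\varepsilon\alpha + c/2 \in \tfrac12\BZ$ and hence $\alpha \in \tfrac12\BZ$, which is absurd. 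If $d' > 1$, then $\pi$ is not one dimensional, so $p$ and $q$ are both positive (otherwise Lemma~\ref{lem::one dimensional--distinction} would force $\pi$ to be a character), and Theorem~\ref{thm::Speh + distinguishe = max Levi G_m,m} gives $p = q = n/2$; thus $\mu_{(p-q)/2}$ is trivial and $\pi$ is $H_{n/2,n/2}$-distinguished. Theorem~\ref{thm::proof+classification+Speh} then shows that $\Delta$ is $H_{d'/2,d'/2}$-distinguished (in particular $d'$ is even), whence $\Delta$ is self-dual by Proposition~\ref{prop::starting point}; therefore $\pi = \speh(\Delta,k) \cong \speh(\Delta^\vee,k) = \pi^\vee$ is self-dual, contradicting the first assertion.

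The only delicate point I foresee is the clean separation of the one-dimensional and degenerate ($pq=0$) cases from the essentially Speh reduction, since these are exactly the configurations not covered by Theorem~\ref{thm::Speh + distinguishe = max Levi G_m,m}; once these are dispatched via Lemma~\ref{lem::one dimensional--distinction} together with the support hypothesis, the remainder is a direct chain of appeals to the earlier results, using only the unitarity of $\rho$ and the strict inequality $0 < \alpha < 1/2$. I do not anticipate a genuine obstacle beyond this bookkeeping.
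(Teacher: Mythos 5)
Your proof is correct and follows essentially the same structure as the paper's: show the cuspidal line is not self-dual and invoke Lemma~\ref{lem::DualOfLadder} for the first assertion; split the second assertion into the one-dimensional and non-one-dimensional cases, using the support constraint to kill the former and reducing the latter to an essentially Speh representation that must be self-dual, contradicting the first assertion. One small remark on your last step: after using Theorem~\ref{thm::left aligned + distinguish = Speh} and Theorem~\ref{thm::Speh + distinguishe = max Levi G_m,m} to conclude $\pi$ is a non-one-dimensional essentially Speh representation with $p=q$, you route to self-dualness via Theorem~\ref{thm::proof+classification+Speh} followed by Proposition~\ref{prop::starting point}; this works (and there is no circularity), but Proposition~\ref{prop::ladder-Speh--selfdual} already gives the self-dualness of $\pi$ directly for essentially Speh non-one-dimensional $(H_{p,q},\mu_a)$-distinguished representations with $p,q>0$, which is what the paper has in mind when it asserts self-dualness at this point.
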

\begin{proof}
	As $0 < \alpha < 1/2$, the cuspidal line  $\BZ \nu^{\alpha+c/2}\rho$ (or $\BZ \nu^{-\alpha+c/2}\rho$) is not self-dual. Thus $\pi$ cannot be self-dual by Lemma \ref{lem::DualOfLadder}. For the second statement, if $\pi$ is one dimensional, then by Lemma \ref{lem::one dimensional--special condition}, the cuspidal supports of $\pi$ is contained in $\BZ \nu^0$ or $\BZ \nu^{-1/2}$. This contradicts with our assumption; if $\pi$ is not one dimensional, then by Proposition \ref{prop::left aligned + distinguish = Speh} and Corollary \ref{cor::Speh + distinguishe = max Levi G_m,m}, one sees $\pi$ is self-dual. This is absurd as shown by the first statement.
\end{proof}

\begin{thm}
	An irreducible unitary representation $\pi$ of $G_{2n}$ is $H_{n,n}$-distinguished if and only if it is self-dual and its Arthur part $\pi_{\textup{Ar}}$ is of the form \eqref{formula::standard form-distinguished unitary}.
\end{thm}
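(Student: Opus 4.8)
The plan is to deduce the statement from the Arthur-type case, Theorem \ref{thm::proof+classification+UnitaryDual}, together with a single application of the geometric lemma. Write $\pi = \pi_{\textup{Ar}} \times \pi_c$, where $\pi_c = \speh(\delta_1,k_1)[\alpha_1,-\alpha_1] \times \cdots \times \speh(\delta_N,k_N)[\alpha_N,-\alpha_N]$ collects the complementary series building blocks ($0 < \alpha_j < 1/2$). The observation used throughout is that the cuspidal supports of $\pi_{\textup{Ar}}$ and of $\pi_c$ are disjoint: a unitary square-integrable representation has the form $\Delta([-(l-1)/2,(l-1)/2]_{\rho_0})$ with $\rho_0$ unitary cuspidal, so a unitary Speh representation is supported on a line $\BZ\nu^{c}\rho_0$ with $c \in \{0,1/2\}$, hence with real parts in $\tfrac12\BZ$, whereas $\speh(\delta_j,k_j)[\alpha_j,-\alpha_j]$ is supported on lines whose real parts have fractional part $\pm\alpha_j \notin \tfrac12\BZ$; the same remains true after taking contragredients.

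For the `if' direction, assume $\pi$ is self-dual and $\pi_{\textup{Ar}}$ is of the form \eqref{formula::standard form-distinguished unitary}. Then $\pi_{\textup{Ar}}$ is $H$-distinguished by Theorem \ref{thm::proof+classification+UnitaryDual}, and it is self-dual (each block $\sigma_i \times \sigma_i^\vee$ is, and each distinguished $\sigma_j$ is self-dual by Jacquet--Rallis); hence, by uniqueness of the Tadi\'{c} decomposition, $\pi_c$ is self-dual. A self-dual product of complementary series is, up to rearrangement (legitimate as $\pi_c$ is irreducible), a product of self-dual complementary series and of pairs $\tau \times \tau^\vee$; the former are $H$-distinguished by the work of Blanc and Delorme \cite{Blanc-Delorme}, and so are the latter, so by Lemma \ref{lem::preservation-parabolic induction} the representation $\pi_c$, and then $\pi = \pi_{\textup{Ar}} \times \pi_c$, is $H$-distinguished.

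For the `only if' direction, assume $\pi$ is $H_{n,n}$-distinguished. By Jacquet--Rallis, $\pi \cong \pi^\vee$; since a Speh representation and a complementary series are never isomorphic, uniqueness of the Tadi\'{c} decomposition gives $\pi_{\textup{Ar}} \cong \pi_{\textup{Ar}}^\vee$ and $\pi_c \cong \pi_c^\vee$. It thus remains to show that $\pi_{\textup{Ar}}$ is $H$-distinguished, for then Theorem \ref{thm::proof+classification+UnitaryDual} puts it in the form \eqref{formula::standard form-distinguished unitary}. Let $k$ be the degree of $\pi_{\textup{Ar}}$ and apply Corollary \ref{cor::Distinction--maximal parabolic} to $\pi = \pi_{\textup{Ar}} \times \pi_c$ with $(p,q) = (n,n)$, $a = 0$: there are $(r,s) \in \mathcal{I}^{k}_{n,n}$ with defect $d = k - r - s$ and, passing to a composition factor as in the proof of Proposition \ref{prop::Geometric Lemma-Main}, a pure tensor $\rho_1 \otimes \rho_2 \otimes \rho_3 \otimes \rho_4$ occurring in $r_{(k-d,d)}\pi_{\textup{Ar}} \otimes r_{(d,2n-k-d)}\pi_c$ that is $(L_x, \delta_{Q_x}\delta_Q^{-1/2}\mu_0^{\eta^{-1}})$-distinguished. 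By Corollary \ref{cor::distinction--pure tensor} we have $\rho_2 \cong \rho_3^\vee$; but $\rho_2$ has cuspidal support inside that of $\pi_{\textup{Ar}}$ and $\rho_3^\vee$ inside the dual of that of $\pi_c$, and these are disjoint by the observation above, so $\rho_2$ and $\rho_3$ are the trivial representation of $G_0$, i.e. $d = 0$. Then $\rho_1 = \pi_{\textup{Ar}}$, and Corollary \ref{cor::distinction--pure tensor} gives that $\pi_{\textup{Ar}}$ is $(H_{r,s}, \mu_{(s-r)/2})$-distinguished with $r + s = k$. Evaluating a nonzero distinguishing functional on the scalar matrices $zI_k \in H_{r,s}$ shows the central character of $\pi_{\textup{Ar}}$ has real part $-(s-r)^2/2$; since $\pi_{\textup{Ar}}$ is self-dual, hence unitarizable, its central character is unitary, forcing $r = s$. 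Thus $\pi_{\textup{Ar}}$ is $H_{k/2,k/2}$-distinguished, as required.

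The delicate points are the disjointness of cuspidal supports, which relies on the precise shape of unitary square-integrable representations, and the correct bookkeeping of the twisting character through Corollary \ref{cor::distinction--pure tensor}, needed so the central-character argument cleanly yields $r=s$; everything else is an appeal to Theorem \ref{thm::proof+classification+UnitaryDual} or routine. As an alternative to passing through the degree of $\pi_{\textup{Ar}}$, one could peel the complementary series off one essentially Speh factor at a time using Proposition \ref{prop::ladder-products of left aligned}, with Lemma \ref{lem::classification--entire unitary dual} ruling out the parasitic cases and Proposition \ref{prop::cancellation result} handling the cancellation; the reduction above is shorter but essentially equivalent.
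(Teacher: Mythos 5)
Your proof is correct, and the \textup{`only if'} direction takes a genuinely different, cleaner route than the paper. The paper peels the complementary series off two factors at a time, iterating Proposition~\ref{prop::ladder-products of left aligned} over a carefully ordered product of essentially Speh factors, with Lemma~\ref{lem::classification--entire unitary dual} invoked at each stage to rule out Cases A and B, and appealing to unspecified ``simple arguments'' for the deletion step in Case~C. You instead apply Corollary~\ref{cor::Distinction--maximal parabolic} a single time to $\pi_{\textup{Ar}}\times\pi_c$ along the maximal parabolic of type $(k,2n-k)$, and the disjointness of cuspidal supports between $\supp(\pi_{\textup{Ar}})$ (lines with half-integral real parts over unitary cuspidals) and $\supp(\pi_c)^{\vee}$ (real parts $\pm\alpha_j\notin\tfrac12\BZ$) immediately forces $\rho_2\cong\rho_3^{\vee}$ to be the trivial representation of $G_0$, hence $d=0$ and the pure tensor collapses to $\pi_{\textup{Ar}}\otimes\pi_c$; the twist is then exactly $\mu_{(s-r)/2}$ and the unitarity of the central character of $\pi_{\textup{Ar}}$ pins down $r=s$. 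This makes explicit the cuspidal-support disjointness that the paper encodes indirectly in Lemma~\ref{lem::classification--entire unitary dual}, and it uses it once rather than iteratively; both arguments then reduce to Theorem~\ref{thm::proof+classification+UnitaryDual}. The \textup{`if'} direction coincides with the paper's. The only minor inaccuracy is a throwaway remark at the end: Proposition~\ref{prop::cancellation result} is invoked in the proof of the Arthur-type Theorem~\ref{thm::proof+classification+UnitaryDual}, not in the paper's treatment of the full unitary dual, so the ``alternative route'' you sketch at the end does not actually need it either.
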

\begin{proof}
	To simplify notation, we will say a representaion $H$-distinguished for $H_{m,m}$-distinguished when there is no need to address $m$. Write $\pi = \pi_{\textup{Ar}} \times \pi_c$. If $\pi$ is self-dual, by uniqueness of Tadi\'{c} decomposition, we have $\pi_c$ is also self-dual. As $\pi_c$ is a commutative product of complementary series representations, we have $\pi_c$ is $H$-distinguished. The `if' part then follows from Lemma \ref{lem::preservation-parabolic induction}. For the `only if' part, write $\pi$ as a product of essentially Speh representations
	\begin{align}\label{formula::classification--entireUnitaryDual+products}
		\pi_1 \times \cdots \times \pi_t \times \nu^{\alpha_1}\speh(\delta_1,k_1) \times \nu^{-\alpha_1}\speh(\delta_1,k_1) \times \cdots \times \nu^{\alpha_r}\speh(\delta_r,k_r) \times \nu^{-\alpha_r}\speh(\delta_r,k_r)
	\end{align}
	such that $k_1 \leqslant k_2 \leqslant \cdots \leqslant k_r$, and that $\pi_i$ is a Speh representation for $i = 1, \cdots,t$. Now we appeal to Proposition \ref{prop::ladder-products of left aligned}. By Lemma \ref{lem::classification--entire unitary dual}, only Case C can happen. Note that we have  $k_1 \leqslant \cdots \leqslant k_r$ and $0 < \alpha_i < 1/2$, $i = 1 ,\cdots,r$. By simple arguments we can show that each time after applying Proposition \ref{prop::ladder-products of left aligned}, we can delete two non-unitary essentially Speh representations in the product \eqref{formula::classification--entireUnitaryDual+products}, and the new representation is $H$-distinguished. Thus by a repeated use of Proposition \ref{prop::ladder-products of left aligned}, we get $\pi_{\textup{Ar}} = \pi_1 \times \cdots \times \pi_t$ is $H$-distinguished. The `only if' part then follows from Theorem \ref{thm::proof+classification+UnitaryDual}. 
\end{proof}

\bibliographystyle{spbasic}      

\end{document}